\newtheorem{theorem}{Theorem}[section]
\newtheorem{lemma}[theorem]{Lemma}
\newtheorem{proposition}[theorem]{Proposition}
\newtheorem{corollary}[theorem]{Corollary}
\theoremstyle{definition}
\theoremstyle{remark}
\newtheorem{remark}[theorem]{Remark}
\newtheorem{example}[theorem]{Example}
\numberwithin{equation}{section}
\DeclareMathOperator{\Div}{div}
\newcommand{\e}{\operatorname{e}}
\newcommand{\im}{\mathrm{i}}
\newcommand{\N}{\mathbf{N}}
\newcommand{\Z}{\mathbf{Z}}
\newcommand{\R}{\mathbf{R}}
\newcommand{\C}{\mathbf{C}}
\newcommand{\E}{\mathbb{E}}
\newcommand{\Prob}{\mathbb{P}}
\newcommand{\eqdef}{\vcentcolon=}
\newcommand{\scal}[1]{\langle #1 \rangle}
\newcommand{\torus}{\mathbb{T}_2}
\newcommand{\B}{\mathcal{B}}
\newcommand{\Bt}{\smash{\widetilde\B}}
\newcommand{\T}{\mathcal{T}}
\newcommand{\X}{\mathcal{X}}
\newcommand{\J}{\mathcal{J}}
\newcommand{\Wz}{\smash{\mathring W}_\wp}
\newcommand{\Hz}{\smash{\mathring H}_\wp}
\newcommand{\Lz}{\smash{\mathring L}}
\newcommand{\memo}[1]{
  \ensuremath{
    \framebox{\tiny\textbf{\kern-2pt\textsf{#1}}\kern-2pt}
  }
  \xspace
}
\begin{document}
  \title[2D surface growth with noise]{Local existence and uniqueness
    for a two--dimensional surface growth equation with space--time white noise}
  \author[D. Bl\"omker]{Dirk Bl\"omker}
    \address{Institut f\"ur Mathematik\\ Universit\"at Augsburg\\ D-86135 Augsburg, Germany}
    \email{\href{mailto:dirk.bloemker@math.uni-augsburg.de}{dirk.bloemker@math.uni-augsburg.de}}
    \urladdr{\url{http://www.math.uni-augsburg.de/ana/bloemker.html}}
  \author[M. Romito]{Marco Romito}
    \address{Dipartimento di Matematica, Universit\`a di Pisa, Largo Bruno Pontecorvo 5, I--56127 Pisa, Italia}
    \email{\href{mailto:romito@dm.unipi.it}{romito@dm.unipi.it}}
    \urladdr{\url{http://www.dm.unipi.it/pages/romito}}
  \thanks{Part of the work was done at the Newton institute for Mathematical
    Sciences in Cambridge (UK), whose support is gratefully acknowledged,
    during the program \emph{Stochastic partial differential equations}.}
  \subjclass[2010]{35B33, 35B45, 35B65, 35K55, 35Qxx,35R60, 60G15, {\bf 60H15}}
  \keywords{Local existence and uniqueness, surface growth model, regularization of noise, fixed point argument, mild solution}
  \date{July 14, 2013}
  \begin{abstract}
    We study local existence and uniqueness for a surface growth model with
    space--time white noise in 2D. Unfortunately, the direct fixed-point argument
    for mild solutions fails here, as we do not have sufficient
    regularity for the stochastic forcing.  Nevertheless, one can give a 
    rigorous meaning to the stochastic PDE and show uniqueness of solutions in that setting.
    Using spectral Galerkin method and any other types of regularization of the noise,
    we obtain always the same solution.
  \end{abstract}
\maketitle
\section{Introduction}

We study local existence and uniqueness of the following equation 
\begin{equation}\label{e:2dsg}
  \partial_t h + \Delta^2 h + \Delta|\nabla h|^2
    = \eta
\end{equation}
subject to periodic boundary conditions on  $[0,2\pi]^2$ and with space--time
white noise $\eta$. This equation  arises in the theory of amorphous surface
growth, see for example \cite{Rai-a,Rai-b}, and it has been considered also
in the theory of ion sputtering. The equation is simplified in the sense that
we have left out lower order terms that can easily be handled and do not present
any obstacle in the theory of local existence.

A thorough analysis of the one--dimensional version of the problem has been
given in \cite{BloRom2009}, where the general theory introduced in
\cite{FlaRom2008} has allowed to prove the existence of Markov solutions to
the equation. Moreover each of these solutions converges to a unique equilibrium
distribution.

In order to complete the same program for the (physically relevant)
two--dimensional case, there are several problems that need to be faced.
\begin{itemize}
  \item When dealing with space--time white noise it turns out that
    the expected smoothness of the solution is not enough to define
    the non--linear term.
  \item In contrast with the one--dimensional case, existence of global
    weak solutions is harder, due to the lack of reasonable energy estimates.
    Existence of weak solutions without noise has been proved in
    \cite{Wi2011}, using a--priori bounds derived from the estimate
    of $\int e^h\,dx$, which cannot be used for any approximation by  Galerkin methods.
  \item The weak--strong uniqueness principle ({i.e.} uniqueness of
    local solutions in the class of weak solutions) fails, and this
    is a serious obstacle to the application of the same ideas
    used in \cite{BloFlaRom2009}.
\end{itemize}

In this paper we address the problem of proving the existence
of local solutions for the equation forced by space--time white noise,
as well as the issue of low regularity for the non--linear term.
A preliminary existence result of local in time solution has been
already given in \cite{BloRom2012}, based on the ideas introduced
in \cite{KocTat2001}. Nevertheless, these methods where not able to 
treat the physically relevant case of space-time white noise. 

Here we follow an approach similar to
the one used in \cite{DapDeb2002} for a similar singular
two--dimensional problem with space--time white noise.
One key difference is that we cannot rely on an explicitly 
given invariant measure.
The idea is to decompose the solution in a rough part having
the low regularity dictated by the forcing and a remainder,
slightly smoother. The non--linearity for the rough term is
then defined as the limit of  cut--offs via spectral Galerkin methods, 
thanks to the underlying Gaussian nature of the processes involved. This,
roughly speaking, corresponds to a re--normalization of the
non--linearity, but without any additional term, due to the
fact that the Laplace in front of the gradient squared
kills the infinite constant. The method works even for rougher
noise at the price, of a lower level of regularity. For even rougher noise 
the remainder fails to be regular enough, and then we 
need an additional term in the expansion of the solution.

To be more precise, we interpret the solution as $h = z + v$,
where $z$ solves the corresponding linear stochastic equation
(where the non--linearity is dropped) and $v$ is the remainder.
The term $z$ is a function valued process. Due to regularization of the bi--Laplacian
it is even continuous in space and time, but it fails to have a derivative. 
The remainder $v$ is then given by the mild solution
\[
  v(t)
    =  \e^{-tA}h_0
     - \int_0^t\e^{-(t-s)\Delta^2} \Delta(
       |\nabla v|^2 + 2\nabla v\cdot\nabla z + |\nabla z|^2)\,dt
     + z(t).
\]
Since $v$ is regular enough, the double product $\nabla v\cdot\nabla z$
is well defined. The problems originate from the ``squared distribution''
$|\nabla z|^2$. Once this term is properly defined as a limit of
spectral approximations, for instance, we can work out a fixed point argument
similar to \cite{FujKat1964} (see also \cite{BloRom2009}).

Recently there is an interest in the analysis of
non--linear PDEs that, like the one presented here,
are forced by noise rough enough so that the non--linear
term in principle is not well--defined. The meaning of
the non--linearity is then recovered through probability.
We refer to \cite{Hai2012,HaiWeb2013}. Two recent
papers \cite{GubImkPer2012,Hai2013} have proposed
general and powerful methods that apply to our equation
\eqref{e:2dsg} as well as to more difficult problems,
where for instance the re--normalized infinite constant
shows up in the equation. The method we have used here,
based on Fourier expansion, works very well for our
problem \eqref{e:2dsg} and we believe it is, at least
for this problem, neat and simple.
\subsection{Notations}

Let $\torus$ be the two dimensional torus, understood as
$\torus = [-\pi,\pi]^2$ with identification of the borders.
Consider the complex Fourier basis of $L^2(\torus;\C)$,
defined by $e_k = (2\pi)^{-1}\e^{\im k\cdot x}$, for $z\in\Z^2$.
Notice that if $u\in L^2(\torus)$ is real valued and
$u = \sum u_k e_k$, then  $u_k = \bar u_{-k}$.
Define $\Z^2_\star = \Z^2\setminus\{0\}$,
$\Z^2_+ = \{k\in\Z^2: k_1>0\text{ or }k_1=0,k_2>0\}$
and $\Z^2_- = -\Z^2_+$.

For $s\in\R$ and $p\geq1$, denote by $W^{s,p}_\wp$ the
space of $2\pi$--periodic $W^{s,p}$ functions, and by
$\Wz^{s,p}$ its sub--space of functions with zero mean.
We will use the notation $H^s_\wp$, $\Hz^s$ and
$\Lz^2$ when $p=2$.
Such spaces can be defined either as the closure in the
corresponding norm of smooth periodic functions, or as spaces
on $\R^2$ with weight, or by interpolation. We refer to
\cite[Chapter 3]{SchTri1987} for details on the definition
as well as for their properties. In particular, we will
use that the dual $(W^{s,p}_\wp)'$ is $W^{-s,q}_\wp$,
where $q$ is the H\"older conjugate exponent of $p$.
Moreover the standard Sobolev embeddings hold, namely
$W^{s,q}_\wp\subset W^{r,p}_\wp$ if $r\leq s$
and $r-\tfrac{d}{p}\leq s-\tfrac{d}{q}$.

Denote by $(S(t))_{t\geq0}$ the semigroup generated in $\Lz^2$ 
by $-\Delta^2$ with domain of definition $\Hz^4$. 
Set for real valued functions
$u_1,u_2$,
\[
  \B(u_1,u_2)
    = \Delta(\nabla u_1\cdot\nabla u_2)\;.
\]
It will be useful, for our purposes, to extend the definition
of $\B$ over complex valued functions as
$\B(u_1,u_2) = \Delta(\nabla u_1\cdot\overline{\nabla u_2})$.
With this position $\B$ coincides with the previous definition
for real valued functions and it is Hermitian, namely
$\B(u_2,u_1) = \overline{\B(u_1,u_2)}$. 

In the rest of the paper we shall adopt the sloppy habit to
use the same symbol for numbers that depend only on universal
constants and that can change from line to line of an inequality.
\section{Main results}

\subsection{Existence of mild solutions}\label{s:mild1}

Our first main results shows existence of a local solution
of \eqref{e:2dsg}. The solution is interpreted in the mild
formulation
\begin{equation}\label{e:hmild}
  h(t)
    = S(t)h_0
        - \int_0^t S(t-s)\B(h,h)\,ds
        - \int_0^t S(t-s)\,dW_s,
\end{equation}
for a suitable initial condition $h_0$.
Denote by $z(t) = \int_0^t S(t-s)\,dW_s$ the stochastic convolution.
The above mild formulation can be recast in terms of $v = h - z$ as
\[
  v(t)
    = S(t)h_0
        - \int_0^t S(t-s)\B(v,v)\,ds
        - \int_0^t S(t-s)\bigl(\B(z,z) + 2\B(z,v)\bigr)\,ds.
\]
Since we do not expect the term $\B(z,z)$ to be well defined,
given the regularity of the stochastic convolution (see Lemma
\ref{l:fail}, we replace it by a suitable extension $\Bt(z,z)$
defined in Section \ref{s:z} as the limit of spectral Galerkin
approximations of $z$.

In conclusion a mild solution $h$ of \eqref{e:2dsg} is
a random process such that $v = h - z$ is a mild solution
in the sense that
\begin{equation}\label{e:vmild}
  v(t)
    = S(t)h_0
        - \int_0^t S(t-s)\B(v,v)\,ds
        - \int_0^t S(t-s)\bigl(\Bt(z,z) + 2\B(z,v)\bigr)\,ds.
\end{equation}
Given $\rho>0$, $\epsilon>0$ and $T>0$, define
\[
  \|u\|_{\epsilon,T}
    \eqdef\sup_{t\leq T} t^{\frac\epsilon4}\|u(t)\|_{H^{1+\epsilon}}.
\]
We will find a solution of \eqref{e:vmild} by means of a fixed point
argument in the space
\[
  \X(\epsilon,\rho,T)
    \eqdef\{v\in C([0,T];\Lz^2(\torus)):
      \|v\|_{\epsilon,T}\leq\rho\}
\]
for suitable $\rho,T$.
\begin{theorem}\label{t:mild}
  Let $h_0\in \Hz^1$ and $\epsilon\in(0,\tfrac12)$.
  Given a cylindrical Wiener process $W$ on $\Lz^2(\torus)$,
  there exist a stopping time $\tau_{h_0}$ and a solution
  $h$ of the mild formulation \eqref{e:hmild} defined on $[0,\tau_{h_0})$,
  such that $h\in C([0,\tau_{h_0});\Lz^2(\torus))$ and
  $h - z\in C((0,\tau_{h_0});\Hz^{1+\epsilon})$.
  Moreover, $\Prob[\tau_{h_0}>0]=1$.
\end{theorem}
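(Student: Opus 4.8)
The plan is to solve \eqref{e:vmild} by a Banach fixed point argument in $\X(\epsilon,\rho,T)$, in the spirit of the Fujita--Kato scheme referenced in the introduction. I write the right--hand side of \eqref{e:vmild} as $\mathcal{M}(v)=\Phi_0+N(v)+L(v)$, where the affine part is $\Phi_0(t)=S(t)h_0-\int_0^t S(t-s)\Bt(z,z)\,ds$, the quadratic part is $N(v)(t)=-\int_0^t S(t-s)\B(v,v)\,ds$, and the part linear in $v$ is $L(v)(t)=-2\int_0^t S(t-s)\B(z,v)\,ds$. The two deterministic ingredients are the smoothing of the bi--Laplacian semigroup, $\|S(t)w\|_{H^\alpha}\le C\,t^{-(\alpha-\beta)/4}\|w\|_{H^\beta}$ for $\alpha\ge\beta$, and the two--dimensional multiplication rule $\|fg\|_{H^s}\le C\|f\|_{H^{s_1}}\|g\|_{H^{s_2}}$ for $s\le s_1+s_2-1$ under the usual positivity conditions. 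These are combined with the regularity of the stochastic data recorded earlier: $\nabla z\in C([0,T];C^{-\kappa})\cap C([0,T];H^{-\kappa})$ for every $\kappa>0$, and the renormalized field $\Bt(z,z)$ of Section~\ref{s:z}, which lives in $C([0,T];H^{-2-\kappa})$.

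Next I would estimate the three pieces in the norm $\|\cdot\|_{\epsilon,T}$. For the quadratic term, since $\nabla v\in H^\epsilon$, the multiplication rule gives $\nabla v\cdot\nabla v\in H^{2\epsilon-1}$, hence $\B(v,v)\in H^{2\epsilon-3}$; applying the smoothing estimate with gain $4-\epsilon$ and inserting $\|v(s)\|_{H^{1+\epsilon}}\le s^{-\epsilon/4}\|v\|_{\epsilon,T}$, a Beta--type time integral produces exactly $\|N(v)\|_{\epsilon,T}\le C\|v\|_{\epsilon,T}^2$, with \emph{no} positive power of $T$ — this is the critical, borderline case. For the linear term I use that $\nabla z\in C^{-\kappa}$ together with $\nabla v\in H^\epsilon$; for $\kappa<\epsilon$ the product $\nabla z\cdot\nabla v$ is classically well defined and lies in $H^{-\kappa}$, so $\B(z,v)\in H^{-2-\kappa}$, the smoothing gain is $3+\epsilon+\kappa<4$, and the resulting Beta integral is convergent and yields a genuine gain $\|L(v)\|_{\epsilon,T}\le C\,T^{(1-\epsilon-\kappa)/4}\,\|\nabla z\|_{C([0,T];C^{-\kappa})}\|v\|_{\epsilon,T}$, with positive exponent because $\epsilon<\tfrac12$. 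The $\Bt(z,z)$ summand of $\Phi_0$ is estimated identically and contributes $\le C\,T^{(1-\kappa)/4}\|\Bt(z,z)\|_{C([0,T];H^{-2-\kappa})}$, while the linear term $S(t)h_0$ satisfies $\sup_{t\le T}t^{\epsilon/4}\|S(t)h_0\|_{H^{1+\epsilon}}\to0$ as $T\to0$, since $h_0\in\Hz^1$ is the scaling--critical datum (this vanishing follows by density from the bound for smooth data).

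With these bounds I would close the argument as follows. First fix $\rho>0$ small, depending only on the universal constant in the quadratic estimate, so that $N$ maps the ball of radius $\rho$ into a ball of radius $\rho/4$ and has Lipschitz constant at most $\tfrac12$ there. Then define the stopping time $\tau_{h_0}$ as the first time at which either $\|\nabla z\|_{C^{-\kappa}}$ or $\|\Bt(z,z)\|_{H^{-2-\kappa}}$ exceeds a fixed threshold; for $T<\tau_{h_0}$ small the gains $T^{(1-\epsilon-\kappa)/4}$ and $T^{(1-\kappa)/4}$, together with the vanishing of the $S(\cdot)h_0$ contribution, force $\|\Phi_0\|_{\epsilon,T}\le\rho/2$ and the operator norm of $L$ below $\tfrac14$. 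A triangle inequality then shows $\mathcal{M}$ is a self--map of $\X(\epsilon,\rho,T)$ with contraction constant at most $\tfrac34$, so the Banach fixed point theorem yields a unique $v\in\X(\epsilon,\rho,T)$. Setting $h=v+z$ gives a solution of \eqref{e:hmild}; the continuity $h-z=v\in C((0,\tau_{h_0});\Hz^{1+\epsilon})$ comes from the weighted space together with the standard time--continuity of the three convolution integrals away from $0$, and $h\in C([0,\tau_{h_0});\Lz^2)$ from the defining membership $v\in C([0,T];\Lz^2)$ and the continuity of $z$. Finally $\Prob[\tau_{h_0}>0]=1$ because the two random norms are a.s.\ finite and right--continuous with the correct behaviour at $t=0$, so a positive admissible $T(\omega)$ exists almost surely.

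The main obstacle is the criticality of the quadratic estimate: because $\Hz^1$ is scaling--critical in two dimensions, $\B(v,v)$ produces no spare power of $T$, so the contraction cannot be obtained from shrinking $T$ alone and must instead rest on the small--radius choice of $\rho$, reconciled with arbitrary $\|h_0\|_{\Hz^1}$ only through the critical vanishing $t^{\epsilon/4}\|S(t)h_0\|_{H^{1+\epsilon}}\to0$. The more delicate conceptual point, however, is that the crude Hilbert--space regularity $\nabla z\in H^{-\kappa}$ is too weak to control $\B(z,v)$ (it would force a non--integrable singularity of order $(t-s)^{-1-\kappa/4}$); the estimate only closes once one exploits the sharper $L^\infty$--based regularity $\nabla z\in C^{-\kappa}$ and, crucially, the renormalized object $\Bt(z,z)$ supplied by Section~\ref{s:z}, whose random norms are precisely what the stopping time $\tau_{h_0}$ is designed to tame.
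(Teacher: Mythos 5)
Your architecture is the paper's: the same splitting of the right--hand side of \eqref{e:vmild} into $S(t)h_0$, the quadratic term, the mixed term and the $\Bt(z,z)$ term, a Banach fixed point in $\X(\epsilon,\rho,T)$, and the correct diagnosis that the quadratic term is critical --- your bound $\B(v,v)\in H^{2\epsilon-3}$ with gain $4-\epsilon$ and no spare power of $T$ is exactly the paper's Lemma \ref{l:Bsquare} with $a=1-2\epsilon$, and the resolution via small $\rho$ together with $R_{h_0}(T)\to0$ for the critical datum $h_0\in\Hz^1$ is the same. Your exponents for the linear and forcing terms, $(1-\epsilon-\kappa)/4$ and $(1-\kappa)/4$, are the analogues of the paper's $e_1$ and $e_2$ (up to the H\"older correction $1/q$), and your observation that the $L^2$--scale regularity $\nabla z\in H^{-\kappa}$ would produce the non--integrable singularity $(t-s)^{-1-\kappa/4}$ correctly identifies why an $L^\infty$--flavoured space is needed; the paper realizes this through $z\in W^{\alpha,q}$ with $q$ large and Corollary \ref{c:mixed}, which is morally equivalent to your paraproduct bound $\mathcal{C}^{-\kappa}\times H^\epsilon\to H^{-\kappa}$.

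There is, however, a genuine gap in your stochastic inputs. You assert, as ``recorded earlier'', that $\nabla z\in C([0,T];\mathcal{C}^{-\kappa})$ and that $\Bt(z,z)\in C([0,T];H^{-2-\kappa})$. Neither is established: Section \ref{s:z} provides only the \emph{fixed--time} moment bounds $\sup_{t>0}\E\bigl[\|z(t)\|^p_{W^{s,p}}\bigr]<\infty$ for $s<1$ (Proposition \ref{p:zreg}) and $\sup_{t>0}\E\bigl[\|\Bt(z(t),z(t))\|^p_{H^{-2-\gamma}}\bigr]<\infty$ (Proposition \ref{p:nelson}), and the paper explicitly remarks that improved time regularity will \emph{not} be used. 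Fixed--time moments do not give a.s.\ finiteness of the sup--in--time norms, let alone the (right--)continuity at $t=0$ on which your threshold stopping time, the self--map bound for $\Phi_0$, and the conclusion $\Prob[\tau_{h_0}>0]=1$ all rest. The defect is repairable in two ways: (i) prove path continuity by a Kolmogorov--type argument in time (Gaussianity and hypercontractivity control moments of time increments) --- true but additional work your proposal does not supply; or (ii), the paper's route, replace sup--in--time norms by the time--integrated quantities $Z_1(T)=\|z\|_{L^q(0,T;\Wz^{\alpha,q})}$ and $Z_2(T)=\|\Bt(z,z)\|_{L^{q'}(0,T;H^{-\gamma'}_\wp)}$, estimating the mixed and forcing terms by H\"older in time (this is where the $1/q$ in $e_1,e_2$ comes from). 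These integrated norms are a.s.\ finite directly by Fubini from the moment bounds, and are continuous, nondecreasing, and vanish as $T\to0$, so the stopping times $\tau_a^c$, $\tau_b^s$ of the paper are automatically positive --- precisely the point your sup--norm formulation leaves unjustified.
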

The proof is given later in Section \ref{t:mild}.
Note that solutions are unique up to the minimum of both 
their stopping times in the space $\X(\epsilon,\rho,T)$.
Moreover, by standard methods one can continue uniquely the solution 
as continuous $\Hz^{1+\epsilon}$-valued solutions, until they blow up.
\subsection{Other regularizations}

Here we consider regularizing methods 
different from the spectral Galerkin method used to define $\Bt(z,z)$.
We give an abstract criterion and examples of its application.

We first define what we mean by regularization of $z$. Let $\Phi$
be a bounded operator on $\Lz^2(\torus)$ and define
\[
  z^\Phi(t)
    = \int_0^t S(t-s)\Phi\,dW(s)
    = \sum_{k\in\Z^2_\star}\int_0^t \e^{-(t-s)|k|^4}\,d\beta^\Phi_k(s)e_k,
\] 
where the $\beta_k^\Phi(t)=\langle \Phi W(t),e_k\rangle$ are
(not necessarily independent) Brownian motions with variance $\|\Phi e_k\|^2_{L^2}$. We suppose that
the regularized process $z^\Phi$ defined above is smooth enough in order
to define $B(z^\Phi,z^\Phi)$ uniquely. A sufficient condition that
ensures this statement is given in the following lemma.
\begin{lemma}\label{l:admissible}
  Let $\Phi$ and $z^\Phi$ as above. Assume that
  for every $k\in\Z^2_\star$,
  \begin{equation}\label{e:absconv}
    \sum_{m+n=k} \frac{\|\Phi e_m\|_{L^2}\|\Phi e_n\|_{L^2}}{|m|\,|n|}
      <\infty,
  \end{equation}
  then $B(z^\Phi,z^\Phi)$ is well--defined as an element
  of $\Hz^{-2-\gamma}$ for every $\gamma>0$.  
  In particular \eqref{e:absconv} holds if
  $\sum_m |m|^{-2}\|\Phi e_m\|^2_{L^2}<\infty$.
\end{lemma}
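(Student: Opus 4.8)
The plan is to reduce everything to the Fourier side, where $\B(z^\Phi,z^\Phi)$ becomes an explicit quadratic expression in the modes of $z^\Phi$, and then to control that expression in $L^1(\Omega)$ for each fixed frequency and in $L^2(\Omega;\Hz^{-2-\gamma})$ globally. Write $z^\Phi(t)=\sum_{m\in\Z^2_\star}\zeta_m(t)\,e_m$ with $\zeta_m(t)=\int_0^t\e^{-(t-s)|m|^4}\,d\beta^\Phi_m(s)$. Using $\nabla e_m=\im m\,e_m$, the product rule $e_me_{-n}=(2\pi)^{-1}e_{m-n}$, and the reality relation $\zeta_{-m}=\bar\zeta_m$ (which holds because $W$ is a real process), a direct computation gives the $k$-th Fourier coefficient
\[
  [\B(z^\Phi,z^\Phi)]_k=(2\pi)^{-1}|k|^2\sum_{m+n=k}(m\cdot n)\,\zeta_m\zeta_n,\qquad k\in\Z^2_\star.
\]
I would define $\B(z^\Phi,z^\Phi)$ as the limit of the spectral cut-offs, whose $k$-th coefficient is the same sum restricted to $|m|,|n|\le N$, and prove convergence first mode by mode and then in $\Hz^{-2-\gamma}$.

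First I would record the elementary variance bound from the It\^o isometry: since $\beta^\Phi_m$ has quadratic variation $\|\Phi e_m\|_{L^2}^2\,ds$,
\[
  \E|\zeta_m(t)|^2=\|\Phi e_m\|_{L^2}^2\,\frac{1-\e^{-2t|m|^4}}{2|m|^4}\le\frac{\|\Phi e_m\|_{L^2}^2}{2|m|^4},
\]
uniformly in $t$. Combined with $|m\cdot n|\le|m|\,|n|$ and Cauchy--Schwarz in $L^2(\Omega)$, this yields $\E|(m\cdot n)\zeta_m\zeta_n|\le\tfrac12\,\|\Phi e_m\|_{L^2}\|\Phi e_n\|_{L^2}/(|m|\,|n|)$, so the series defining each coefficient converges absolutely in $L^1(\Omega)$ \emph{precisely} under hypothesis \eqref{e:absconv}. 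This already makes each $[\B(z^\Phi,z^\Phi)]_k$ a well-defined random variable and identifies it as the mode-wise limit of the cut-offs.

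The core of the argument is to upgrade this to membership in $\Hz^{-2-\gamma}$, which I would obtain by bounding $\E\|\B(z^\Phi,z^\Phi)\|_{\Hz^{-2-\gamma}}^2=(2\pi)^{-2}\sum_k|k|^{-2\gamma}\,\E|X_k|^2$, where $X_k=\sum_{m+n=k}(m\cdot n)\zeta_m\zeta_n$. Expanding $\E|X_k|^2$ by the Gaussian (Isserlis/Wick) formula, and using that each $\zeta_m$ is Gaussian so that $\E|\zeta_m|^4\lesssim(\E|\zeta_m|^2)^2$, the diagonal pairings produce the factorised bound $\E|X_k|^2\lesssim\sum_{m+n=k}\|\Phi e_m\|_{L^2}^2\|\Phi e_n\|_{L^2}^2/(|m|^2|n|^2)$. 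Summing against $|k|^{-2\gamma}$ and using that $|k|\ge1$ on the lattice (so $|k|^{-2\gamma}\le1$) turns the weighted sum into a full convolution sum bounded by $\bigl(\sum_m|m|^{-2}\|\Phi e_m\|_{L^2}^2\bigr)^2$; by monotone convergence this gives $\B(z^\Phi,z^\Phi)\in\Hz^{-2-\gamma}$ almost surely together with convergence of the cut-offs in $L^2(\Omega;\Hz^{-2-\gamma})$, hence independence of the limit from the regularisation procedure.

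I expect the main obstacle to be exactly this last step when $\Phi$ is not diagonal: then the modes $\zeta_m$ are correlated, the Wick expansion carries off-diagonal covariance terms $\E[\zeta_m\bar\zeta_{m'}]$, and one must show these are controlled by the boundedness of $\Phi$ together with a Cauchy--Schwarz estimate on the covariance, so that they do not destroy the convolution bound; here the strictly negative order $-2-\gamma$ with $\gamma>0$ provides the room needed to absorb the remaining terms. Finally, the ``in particular'' statement is routine: by the arithmetic--geometric mean inequality $\|\Phi e_m\|_{L^2}\|\Phi e_n\|_{L^2}/(|m|\,|n|)\le\tfrac12\bigl(\|\Phi e_m\|_{L^2}^2/|m|^2+\|\Phi e_n\|_{L^2}^2/|n|^2\bigr)$, so summing over the constraint $m+n=k$ bounds the left-hand side of \eqref{e:absconv} by $\sum_m|m|^{-2}\|\Phi e_m\|_{L^2}^2$, uniformly in $k$.
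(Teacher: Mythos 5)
Your first half coincides with the paper's actual proof, which is exactly the observation you make: by the It\^o isometry $\E|\zeta_m(t)|^2\le\|\Phi e_m\|_{L^2}^2/(2|m|^4)$ and Cauchy--Schwarz, hypothesis \eqref{e:absconv} makes each coefficient $J_k$ absolutely convergent in $L^1(\Omega)$, hence a.s., so that $\B(z^\Phi,z^\Phi)$ is defined mode by mode with no truncation or renormalisation, the Sobolev regularity being delegated to the second--moment computations of Lemma \ref{l:wick}. (Your word ``precisely'' overclaims: you prove sufficiency of \eqref{e:absconv}, not necessity.) The genuine gap is in your third step, and you have in fact located it yourself without closing it. The factorised bound $\E|X_k|^2\lesssim\sum_{m+n=k}\|\Phi e_m\|_{L^2}^2\|\Phi e_n\|_{L^2}^2/(|m|^2|n|^2)$ retains only the diagonal Wick pairings, which is legitimate only when $\Phi$ is diagonal in the Fourier basis; the lemma is stated for a general bounded $\Phi$, for which the $\beta_k^\Phi$ are correlated. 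Your hope that ``the strictly negative order $-2-\gamma$ provides the room'' is not borne out by the obvious estimate: Cauchy--Schwarz gives $|\E[\zeta_{m_1}\bar\zeta_{m_2}]|\le\|\Phi e_{m_1}\|_{L^2}\|\Phi e_{m_2}\|_{L^2}/(|m_1|^4+|m_2|^4)$, and since $|m_1|^4+|m_2|^4\ge2|m_1|^2|m_2|^2$, the off--diagonal pairings contribute to $\E|X_k|^2$ at most $\tfrac14 S_k^2$, where $S_k$ is the sum in \eqref{e:absconv} --- a quantity with \emph{no} decay in $k$. Even when $S_k$ is uniformly bounded, the weighted sum $\sum_k|k|^{-2\gamma}S_k^2$ then closes only for $\gamma>1$, not for every $\gamma>0$. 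Controlling the off--diagonal block for all $\gamma>0$ requires genuine summability hypotheses on the correlations, which is exactly what the paper imposes when it actually needs such control, in \eqref{e:thmor2} and \eqref{e:Phibound1}--\eqref{e:Phibound2}.

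There is a second, independent defect in the same step: after obtaining the diagonal bound you discard the weight via $|k|^{-2\gamma}\le1$ and sum to $\bigl(\sum_m|m|^{-2}\|\Phi e_m\|_{L^2}^2\bigr)^2$. That quantity is finite under the ``in particular'' \emph{sufficient} condition, not under the lemma's actual hypothesis \eqref{e:absconv}, which only asserts each $S_k<\infty$; so as written you have proved the conclusion only under the stronger hypothesis. By contrast, the computation the paper points to in Lemma \ref{l:wick} keeps the weight and extracts decay in $k$ from Lemma \ref{l:sumsum}, namely $\sum_{m+n=k}|m|^{-2}|n|^{-2}\lesssim|k|^{-2}\log(1+|k|)$ (using $\|\Phi e_m\|_{L^2}\le\|\Phi\|$ on the diagonal pairings), which is what produces membership in $\Hz^{-2-\gamma}$ for \emph{every} $\gamma>0$. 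To repair your write--up you should run the Wick expansion with the Lemma \ref{l:sumsum} decay on the diagonal terms, and either restrict to $\Phi$ diagonal or formulate and verify an off--diagonal summability condition, rather than appealing to the negative order alone. Your ``in particular'' argument via the arithmetic--geometric mean inequality is correct and matches the intended one.
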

Having approximations in mind, we turn to sequences $(\Phi_N)_{N\in\N}$
of bounded operators satisfying \eqref{e:absconv} and we analyse
under which conditions they provide a ''good'' approximation of
the process $z$. By ''good'' we mean that the quantities involved
in the definition of $\Bt(z,z)$ and in the proof of Theorem
\ref{t:mild} should be well approximated by the corresponding
quantities for the sequence $(z^{\Phi_N})_{N\geq1}$.
The first result gives sufficient conditions that ensure
convergence in $L^p_\text{loc}([0,\infty);W^{s,p})$.
The technical assumptions on $\Phi_ N$ basically states, that they converge 
in a weak sense to the identity, and that the off-diagonal terms of the operators are not too large.
\begin{theorem}\label{thm:or2}
  Let $(\Phi_N)_{N\geq1}$ be a sequence of bounded operators
  on $\Lz^2(\torus)$ such that
  \begin{itemize}
    \item for every $m,n\in\Z_\star^2$,
      \[
        \langle\Phi_N^\star\Phi_N e_m,e_n\rangle
          \longrightarrow\delta_{m,n},
     \]
      where the Kronecker-Delta is given by $\delta_{m,n}=1$ if $m=n$ and zero otherwise,
    \item there is $\gamma\in(0,1)$ such that
      \begin{equation} \label{e:thmor2}
       \sum_{m,n}\sup_{N\in\N} \Big\{\frac{|\langle(\Phi_N-I) e_m,(\Phi_N-I)e_n\rangle|}
            {(|m|+|n|)^{4-2\gamma}}\Big\}
          <\infty.
     \end{equation}
  \end{itemize}
  Then for every $s\in(0,\gamma)$, $p\geq1$ and $T>0$,
  \[
    \E[\|z^{\Phi_N} - z\|_{L^p([0,T];W^{s,p})}^p]
      \longrightarrow0,
        \qquad N\to\infty.
  \]
\end{theorem}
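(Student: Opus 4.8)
The plan is to exploit the fact that $z^{\Phi_N}-z$ is itself a Gaussian stochastic convolution, so that all $L^p$-information is encoded in a single covariance, combined with the fact that the torus has finite measure. Write $Y_N(t,x)=(1-\Delta)^{s/2}\bigl(z^{\Phi_N}(t)-z(t)\bigr)(x)$, a real centred Gaussian field. Since $\|u\|_{W^{s,p}}^p=\int_{\torus}|(1-\Delta)^{s/2}u(x)|^p\,dx$, Fubini gives
\[
  \E\bigl[\|z^{\Phi_N}-z\|_{L^p([0,T];W^{s,p})}^p\bigr]
    = \int_0^T\!\!\int_{\torus}\E\bigl[|Y_N(t,x)|^p\bigr]\,dx\,dt .
\]
By the equivalence of Gaussian moments $\E[|Y_N(t,x)|^p]=c_p\,\sigma_N(t,x)^p$, where $\sigma_N(t,x)^2=\E[|Y_N(t,x)|^2]$ and $c_p$ depends only on $p$; hence, using $|\torus|<\infty$,
\[
  \E\bigl[\|z^{\Phi_N}-z\|_{L^p([0,T];W^{s,p})}^p\bigr]
    \leq c_p\,|\torus|\,T\,\Bigl(\sup_{t\le T,\,x\in\torus}\sigma_N(t,x)^2\Bigr)^{p/2}.
\]
It therefore suffices to prove $\sup_{t\le T,x}\sigma_N(t,x)^2\to0$, which handles every $p\geq1$ at once by reducing the claim to a single second-moment estimate.

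Next I would compute $\sigma_N^2$ explicitly. Writing $z^{\Phi_N}-z=z^{\Phi_N-I}$, expanding in the Fourier basis, and applying the It\^o isometry yields
\[
  \sigma_N(t,x)^2
    = \sum_{m,n\in\Z^2_\star}(1+|m|^2)^{\frac s2}(1+|n|^2)^{\frac s2}\,c^N_{mn}\,
      \frac{1-\e^{-t(|m|^4+|n|^4)}}{|m|^4+|n|^4}\,e_m(x)\overline{e_n(x)},
\]
where $c^N_{mn}=\langle(\Phi_N-I)e_m,(\Phi_N-I)e_n\rangle$ is precisely the quantity appearing in \eqref{e:thmor2}. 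Since $|e_m(x)\overline{e_n(x)}|=(2\pi)^{-2}$, the temporal factor is bounded by $(|m|^4+|n|^4)^{-1}\le C(|m|+|n|)^{-4}$, and $(1+|m|^2)^{\frac s2}(1+|n|^2)^{\frac s2}\le C(|m|+|n|)^{2s}$ on $\Z^2_\star$, so I obtain the $x,t$-uniform bound
\[
  \sup_{t\le T,\,x}\sigma_N(t,x)^2
    \leq C\sum_{m,n\in\Z^2_\star}\frac{|c^N_{mn}|}{(|m|+|n|)^{4-2s}} .
\]
Because $s<\gamma$ we have $(|m|+|n|)^{4-2s}\ge(|m|+|n|)^{4-2\gamma}$, so each summand is dominated by $\sup_N|c^N_{mn}|/(|m|+|n|)^{4-2\gamma}$, which is summable by \eqref{e:thmor2}. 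This already yields uniform boundedness of the family in $L^p([0,T];W^{s,p})$.

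Finally, for convergence to zero I would pass to the limit in the series by dominated convergence, the summable majorant being the one just produced. The remaining point is the pointwise statement $c^N_{mn}\to0$ for each fixed $(m,n)$. Expanding $c^N_{mn}=\langle\Phi_N^\star\Phi_N e_m,e_n\rangle-\langle\Phi_N e_m,e_n\rangle-\overline{\langle\Phi_N e_n,e_m\rangle}+\delta_{m,n}$ and using the weak convergence of $\Phi_N$ to the identity from the first assumption, every term tends to $\delta_{m,n}$, so $c^N_{mn}\to0$; dominated convergence then gives $\sum_{m,n}|c^N_{mn}|/(|m|+|n|)^{4-2s}\to0$ and hence $\sup_{t\le T,x}\sigma_N^2\to0$. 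I expect the main obstacle to be the operator bookkeeping: matching the covariance of the difference process to the quantity $c^N_{mn}$ controlled by \eqref{e:thmor2} (keeping track of whether $\Phi_N^\star\Phi_N$ or $\Phi_N\Phi_N^\star$ appears and of the correct placement of adjoints in the It\^o isometry), and verifying that the first assumption genuinely delivers the vanishing of every coefficient rather than only of the diagonal ones. The finite-measure reduction and the Gaussian moment equivalence are routine; the analytic heart is the single interchange of limit and summation licensed by \eqref{e:thmor2}.
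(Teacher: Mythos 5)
Your argument is correct in substance, but it takes a genuinely different route from the paper's. The paper proceeds Kolmogorov-style through spatial increments: it bounds
\[
  \E\bigl[|(z^{\Phi_N}-z)(t,x)-(z^{\Phi_N}-z)(t,y)|^2\bigr]
    \leq C\,|x-y|^{2\gamma}\sum_{m,n}\frac{|\langle(\Phi_N-I)^\star(\Phi_N-I)e_m,e_n\rangle|}{(|m|+|n|)^{4-2\gamma}},
\]
using $(1\wedge|m(x-y)|)(1\wedge|n(x-y)|)\leq(|m|\,|n|)^\gamma|x-y|^{2\gamma}$, and then invokes Gaussianity together with the Gagliardo--Slobodeckij form of the $W^{s,p}$ norm, exactly as in Proposition \ref{p:zreg}; this double integral of increments is where $s<\gamma$ enters. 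You instead compute the pointwise variance of $(1-\Delta)^{s/2}(z^{\Phi_N}-z)$, bound it uniformly in $(t,x)$ by the same double sum with exponent $4-2s$, and conclude via Gaussian moment equivalence and $|\torus|<\infty$. Both proofs funnel into the identical final step: dominated convergence for $\sum_{m,n}|c^N_{mn}|(|m|+|n|)^{-(4-2\gamma)}$ with the majorant supplied by \eqref{e:thmor2}. Your version buys a simpler reduction (a single second-moment computation, no increment estimates), at the cost of a definitional slip: the identity $\|u\|_{W^{s,p}}^p=\int_{\torus}|(1-\Delta)^{s/2}u|^p\,dx$ is false for $p\neq2$; the Bessel-potential space $H^{s,p}=F^s_{p,2}$ differs from the Slobodeckij space $W^{s,p}=B^s_{p,p}$ that the paper uses (and uses explicitly in Proposition \ref{p:zreg}). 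The slip is harmless here: pick $s'\in(s,\gamma)$, run your argument at level $s'$, and use the embedding $H^{s',p}_\wp\subset W^{s,p}_\wp$; since the theorem quantifies over all $s\in(0,\gamma)$, nothing is lost, but the patch should be stated.

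The two caveats you flag at the end are real, and on both you stand on exactly the same ground as the paper. Pointwise vanishing of $c^N_{mn}$ requires $\langle\Phi_N e_m,e_n\rangle\to\delta_{m,n}$, which does \emph{not} follow from the stated hypothesis on $\Phi_N^\star\Phi_N$ alone: $\Phi_N=-I$ satisfies both bullet points (the diagonal sum $\sum_m 4\,(2|m|)^{-(4-2\gamma)}$ converges for $\gamma<1$), yet $z^{\Phi_N}-z=-2z\not\to0$. The paper's proof makes the same silent use of weak convergence $\Phi_N\to I$, which the surrounding text only announces informally, so this is a defect of the statement's hypotheses rather than of your argument. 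Likewise, the $\Phi^\star\Phi$ versus $\Phi\Phi^\star$ bookkeeping in the It\^o isometry (whether the covariance of $\beta^{\Phi_N}_m-\beta_m$ produces $\langle(\Phi_N-I)^\star e_n,(\Phi_N-I)^\star e_m\rangle$ or $\langle(\Phi_N-I)e_m,(\Phi_N-I)e_n\rangle$) is glossed in the paper's displayed covariance identity as well; either convention yields a hypothesis of the same form, so this is cosmetic.
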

Our second result gives some sufficient conditions that
ensure that different approximations give the same
limit non--linearity. The particular choice of the
Galerkin truncations operators yields conditions
for the limit of a generic sequence $(\Phi_N)_{N\in\N}$
to the limit non--linearity defined in Section \ref{s:z}.
\begin{theorem}\label{thm:konsist}
  Let $(\Phi_N)_{N\in\N}$ and $(\Psi_N)_{N\in\N}$
  be two sequences of regularizing operators such that for $N\to \infty$
  \begin{equation}\label{e:Phiconv}
    \begin{gathered}
    \langle\Phi_N e_m,\Phi_Ne_n\rangle\to\delta_{m,n},\qquad
    \langle\Psi_N e_m,\Psi_Ne_n\rangle\to\delta_{m,n},\\
    \langle\Psi_N e_m,\Phi_Ne_n\rangle\to\delta_{m,n},
    \end{gathered}
  \end{equation}
  for every $m,n\in\Z^2_\star$. Let 
  \[
    c_{mn}= \sup_{N\in\N} \{
       |\langle\Phi_N e_m,\Phi_Ne_n\rangle|
        + |\langle\Psi_N e_m,\Psi_Ne_n\rangle|
        + |\langle\Phi_N e_m,\Psi_Ne_n\rangle| \}
  \]
  for $m,n\in\Z^2_\star$, and assume that for some $\gamma>0$,
  \begin{equation}\label{e:Phibound1}
    \sum_{k\in\Z^2_\star}|k|^{-2\gamma}\sum_{m+n=k}
        \frac{c_{mn}}{|n|^3|m|^3}
      <\infty,
  \end{equation}
  and
  \begin{equation}\label{e:Phibound2}
    \sum_{k\in\Z^2_\star}|k|^{-2\gamma}
        \sum_{\substack{m_1+n_1=k\\ m_2+n_2=k}}
        \frac{c_{m_1m_2} c_{n_1n_2}}{|m_1|^3|m_2|^3|n_1|^3|n_2|^3}
      <\infty.
  \end{equation}
  Then for all $T>0$ and $q\geq1$,
  \[
    \E\bigl[\|\B(z^{\Phi_N} ,z^{\Phi_N})
        - \B(z^{\Psi_N} ,z^{\Psi_N})\|^q_{L^q([0,T], H^{-2-\gamma})}\bigr]
      \longrightarrow 0.
  \]
\end{theorem}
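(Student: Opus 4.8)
The plan is to pass to Fourier coordinates, reduce every $L^q$ moment to a single second moment by the equivalence of norms on a fixed Wiener chaos, and then estimate that second moment by a Wick expansion whose individual terms are dominated, uniformly in $N$, by the two series \eqref{e:Phibound1}--\eqref{e:Phibound2}. First I would write $z^{\Phi_N}(t)=\sum_{k\in\Z^2_\star}g_k^{\Phi_N}(t)e_k$ with $g_k^{\Phi_N}(t)=\int_0^t\e^{-(t-s)|k|^4}\,d\beta_k^{\Phi_N}(s)$ and expand the non--linearity: a direct computation from $\B(u_1,u_2)=\Delta(\nabla u_1\cdot\overline{\nabla u_2})$ shows that the $k$-th Fourier coefficient of $\B(z^{\Phi_N},z^{\Phi_N})$ is
\[
  \widehat B_k^{\Phi_N}(t)
    = (2\pi)^{-1}|k|^2\sum_{m+n=k}(m\cdot n)\,g_m^{\Phi_N}(t)g_n^{\Phi_N}(t).
\]
The prefactor $|k|^2=|m+n|^2$ is exactly what makes the problem tractable: it vanishes on the diagonal $n=-m$, so the divergent constant that would otherwise arise from $\E[g_m^{\Phi_N}g_{-m}^{\Phi_N}]$ never enters, and no renormalization is needed. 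Writing $D_{N,k}=\widehat B_k^{\Phi_N}-\widehat B_k^{\Psi_N}$, Plancherel gives
\[
  \|\B(z^{\Phi_N},z^{\Phi_N})-\B(z^{\Psi_N},z^{\Psi_N})\|_{H^{-2-\gamma}}^2
    \simeq\sum_{k\in\Z^2_\star}(1+|k|^2)^{-2-\gamma}|D_{N,k}(t)|^2.
\]

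Each $D_{N,k}$ is a quadratic polynomial in jointly Gaussian variables, hence lives in the inhomogeneous second Wiener chaos, so by Nelson's hypercontractivity all its $L^q(\Omega)$ norms are comparable to the $L^2(\Omega)$ norm. Combining this with Minkowski's inequality in the $\ell^2$--over--$L^q$ order and with Fubini reduces the claim to showing
\[
  \int_0^T\sum_{k\in\Z^2_\star}(1+|k|^2)^{-2-\gamma}\,\E\bigl[|D_{N,k}(t)|^2\bigr]\,dt
    \longrightarrow0,\qquad N\to\infty .
\]
I would then evaluate $\E[|D_{N,k}(t)|^2]$ by Wick's theorem. The elementary covariances are
\[
  \E\bigl[g_a^{X}(t)\overline{g_b^{Y}(t)}\bigr]
    = \scal{X e_a,Y e_b}\,\frac{1-\e^{-t(|a|^4+|b|^4)}}{|a|^4+|b|^4},
      \qquad X,Y\in\{\Phi_N,\Psi_N\},
\]
so that, using $|a|^4+|b|^4\ge|a|^2|b|^2$ and the definition of $c_{ab}$, one has $|\E[g_a^X\overline{g_b^Y}]|\lesssim c_{ab}\,(|a|^2|b|^2)^{-1}$ uniformly in $t\le T$ and in $N$. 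Expanding the product of four Gaussians produces three pairings. The two ''connected'' pairings, in which $(m_1,n_1)$ are contracted against $(m_2,n_2)$, yield after inserting $|k|^4$, the factors $(m_1\cdot n_1)(m_2\cdot n_2)$ and the four covariance denominators a double sum with two covariance factors, dominated termwise by the summand of \eqref{e:Phibound2}; the factorized pairing reduces to $\E[\widehat B_k^X]\,\overline{\E[\widehat B_k^X]}$, whose single--covariance structure is exactly the one controlled (as in Lemma \ref{l:admissible}) by \eqref{e:Phibound1}, guaranteeing in passing that each $\B(z^{\Phi_N},z^{\Phi_N})$ is a genuine element of $L^2(\Omega;H^{-2-\gamma})$. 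In every case the $\sup_N$ built into $c_{mn}$ furnishes an $N$--independent, summable envelope for the integrand, and the time factors are bounded by $1$, so the envelope is also independent of $t\in[0,T]$.

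For the convergence itself, hypothesis \eqref{e:Phiconv} forces each elementary covariance $\scal{\Phi_N e_a,\Phi_N e_b}$, $\scal{\Psi_N e_a,\Psi_N e_b}$ and $\scal{\Phi_N e_a,\Psi_N e_b}$ to tend to $\delta_{a,b}$; hence for every fixed configuration $(k;m_1,n_1,m_2,n_2)$ the $\Phi_N$-- and $\Psi_N$--contributions to $\E[|D_{N,k}(t)|^2]$ converge to the same limit and their difference tends to $0$, the renormalization--free structure noted above ensuring the limiting diagonal pieces cancel as well. Applying the dominated convergence theorem successively to the $(m,n)$--sums, the $k$--sum and the time integral, with the single envelope provided by \eqref{e:Phibound1}--\eqref{e:Phibound2}, yields the stated limit for the second moment, and then for every $q\ge1$ via hypercontractivity. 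I expect the main obstacle to be precisely this middle step: organizing the Wick expansion and distributing the weights $|k|^4$ and $(m\cdot n)$ against the covariance denominators so that the resulting multiple series match the two summability hypotheses, while simultaneously justifying the interchange of limit, expectation and infinite summation through one $N$--uniform dominating series.
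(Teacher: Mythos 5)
Your overall architecture coincides with the paper's: expand in Fourier modes, reduce all $q$-th moments to second moments via hypercontractivity on the second Wiener chaos (the paper invokes this exactly as in Proposition \ref{p:nelson}), compute the second moment by Wick's formula, bound the time factors by constants, and conclude by dominated convergence, using \eqref{e:Phiconv} for the pointwise vanishing and \eqref{e:Phibound1}--\eqref{e:Phibound2} for the envelope. The one organizational difference is that the paper first polarizes, writing $\B(z^{\Phi_N},z^{\Phi_N})-\B(z^{\Psi_N},z^{\Psi_N})=\B(z^{\Phi_N}+z^{\Psi_N},z^{\Phi_N}-z^{\Psi_N})$ and working with the processes built from $\beta^{\Phi_N}_k\pm\beta^{\Psi_N}_k$; in the resulting Wick expansion \eqref{e:nondiJ} every term carries at least one covariance of the difference process, which visibly tends to zero by \eqref{e:Phiconv}. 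You instead expand the difference of squares directly and let the limiting Kronecker deltas cancel across the $\Phi\Phi$, $\Phi\Psi$ and $\Psi\Psi$ groups ($1-2+1=0$ on diagonal configurations); both variants use the mixed convergence $\langle\Psi_Ne_m,\Phi_Ne_n\rangle\to\delta_{m,n}$ in the same essential way, and yours is a legitimate, if slightly less tidy, reorganization.

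There is, however, one concrete claim in your middle step that does not survive inspection: you assert that the connected-pairing summands are ``dominated termwise by the summand of \eqref{e:Phibound2}''. With your own covariance bound $|\E[g_a^X\overline{g_b^Y}]|\lesssim c_{ab}\,(|a|^2|b|^2)^{-1}$ and $|m_i\cdot n_i|\le|m_i|\,|n_i|$, the natural envelope for a connected pairing is
\[
  \frac{c_{m_1m_2}\,c_{n_1n_2}}{|m_1|\,|m_2|\,|n_1|\,|n_2|},
\]
with \emph{first} powers in the denominator, while \eqref{e:Phibound2} carries \emph{third} powers; since $|m|^{-3}\le|m|^{-1}$ on $\Z^2_\star$, the hypothesis summand is the smaller quantity, so the asserted termwise domination goes the wrong way. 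The same mismatch affects your factorized pairing, whose natural envelope is the square of a first-power sum of the type appearing in \eqref{e:absconv}, against the (unsquared, third-power) condition \eqref{e:Phibound1}. To make dominated convergence run you must at least do what the paper does: split off the diagonal pairings $m_1=m_2$, $n_1=n_2$, which are absolutely controlled exactly as in Lemma \ref{l:wick} via $\sum_{m+n=k}|m|^{-2}|n|^{-2}\lesssim|k|^{-2}\log(1+|k|)$, and reserve the hypotheses \eqref{e:Phibound1}--\eqref{e:Phibound2} for the off-diagonal remainder. Be aware that the paper itself is terse at precisely this point (``this is ensured by the bounds\dots''), so a complete write-up requires you to exhibit explicitly how the powers in \eqref{e:Phibound1}--\eqref{e:Phibound2} are matched against the off-diagonal Wick terms, rather than appealing to a termwise comparison that, as stated, is false.
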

With the above results at hand, we verify that a convergence
of a regularization $(\Phi_n)_{n\in\N}$ that leads to the
convergence of $z^{\Phi_n}$ to $z$  and of $\B(z^{\Phi_n},z^{\Phi_n})$
to $\tilde\B(z,z)$ result in the solution $v^{\Phi_n}=h^{\Phi_n}-z^{\Phi_n}$
of the regularized problem converging to $v=h-z$, the solution given by
Theorem \ref{t:mild}, in probability. We give only one possible version
of the result. Other versions may be obtained by working on different
function spaces.

Given an initial condition $h_0\in\Hz^1(\torus)$, let $v$ be the
process given by the mild formulation \eqref{e:vmild}. Define
for every $R>0$ the stopping time 
\[
  \tau^R
    =\inf\{t>0:\ \|v(t)-S(t)h_0\|_{H^{1+\epsilon}}\geq R\},
\]
and $\tau^R = \infty$ if the above set is empty. By its definition,
it is immediate to see that $\tau^R\leq\tau_{h_0}$, where
$\tau_{h_0}$ is the life--span of $v$.
\begin{theorem}\label{thm:stability}
  Let $h_0\in\Hz^1(\torus)$.
  Let $\Phi_N$ be a sequence of regularizing operators such 
  that $z^{\Phi_N}\in C^0([0,\infty),\Hz^{1+\epsilon})$ for all $N\in\N$ and  
  fix
  \[\epsilon \in (0,\frac12), \quad 
   \alpha=1-\frac{\epsilon}2, \quad
   q>\frac4\epsilon, \quad
  \beta\in(2,3-\epsilon),\ \text{and}\ q'>\frac4{3-\beta-\epsilon}\;.
  \]
  If
  \begin{equation}\label{e:convstab}
    \E\|z^{\Phi_N} - z \|_{L^q([0,1],W^{\alpha,q})}
        + \E\|\B(z^{\Phi_N},z^{\Phi_N})-\tilde\B(z,z)\|_{L^{q'}([0,1],H^{-\beta})}
      \to 0,
  \end{equation}
  as $N\to\infty$, then
  \[
    \sup_{[0,1\wedge\tau_R]} \|v-v^{\Phi_N}\|_{H^{1+\epsilon}}
        \longrightarrow 0,
  \] 
  in probability.
\end{theorem}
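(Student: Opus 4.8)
The plan is to derive a closed integral equation for the difference $w \eqdef v - v^{\Phi_N}$ and treat it as an (affine-)linear Volterra equation in $w$ driven by a small source, the smallness being supplied by \eqref{e:convstab}. Subtracting the mild equation \eqref{e:vmild} for $v$ from the corresponding identity for $v^{\Phi_N} = h^{\Phi_N} - z^{\Phi_N}$ and exploiting the bilinearity of $\B$, namely
\[
  \B(v,v) - \B(v^{\Phi_N},v^{\Phi_N}) = \B(v,w) + \B(w,v^{\Phi_N}),
  \qquad
  \B(z,v) - \B(z^{\Phi_N},v^{\Phi_N}) = \B(z-z^{\Phi_N},v) + \B(z^{\Phi_N},w),
\]
I would obtain
\[
  w(t) = -\int_0^t S(t-s)\bigl[\B(v,w) + \B(w,v^{\Phi_N}) + 2\B(z^{\Phi_N},w)\bigr]\,ds
         -\int_0^t S(t-s)\,r_N(s)\,ds,
\]
where the source $r_N \eqdef \Bt(z,z) - \B(z^{\Phi_N},z^{\Phi_N}) + 2\B(z-z^{\Phi_N},v)$ collects precisely the two quantities controlled by \eqref{e:convstab}. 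The first integral is linear in $w$ with coefficients built from $v$, $v^{\Phi_N}$ and $z^{\Phi_N}$; note that $\B(w,v^{\Phi_N}) = \B(w,v) - \B(w,w)$ also generates a genuine quadratic self-interaction $\B(w,w)$.

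The core estimates are the bilinear smoothing bounds already established in the proof of Theorem \ref{t:mild}, where the $\Delta^2$-semigroup gains four derivatives with an integrable time singularity, giving
\[
  \Bigl\|\int_0^\cdot S(\cdot-s)\B(u_1,u_2)\,ds\Bigr\|_{\epsilon,T}
    \leq C\,T^\theta\,\|u_1\|_{\epsilon,T}\,\|u_2\|_{\epsilon,T}
\]
for some $\theta>0$. Applying this to the three linear-in-$w$ terms yields a bound $C\,T^\theta\bigl(\|v\|_{\epsilon,T} + \|v^{\Phi_N}\|_{\epsilon,T} + \|z^{\Phi_N}\|_{\epsilon,T}\bigr)\|w\|_{\epsilon,T}$ together with the quadratic contribution $C\,T^\theta\|w\|_{\epsilon,T}^2$ coming from $\B(w,w)$. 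For the source I would estimate the two pieces in exactly the topologies of \eqref{e:convstab}: the mixed term $\B(z-z^{\Phi_N},v)$ is controlled, via the product and duality estimates for $\B$, by $\|z-z^{\Phi_N}\|_{L^q([0,1];W^{\alpha,q})}\,\|v\|_{\epsilon,T}$, while the genuinely singular term $\Bt(z,z) - \B(z^{\Phi_N},z^{\Phi_N})$, whose natural regularity is $H^{-2-\gamma}$ by Lemma \ref{l:admissible}, is controlled after convolution with $S$ by $\|\Bt(z,z) - \B(z^{\Phi_N},z^{\Phi_N})\|_{L^{q'}([0,1];H^{-\beta})}$. The exponent choices $\alpha = 1-\tfrac\epsilon2$, $q>\tfrac4\epsilon$, $\beta\in(2,3-\epsilon)$ and $q'>\tfrac4{3-\beta-\epsilon}$ are exactly what make both convolution estimates land in $H^{1+\epsilon}$ with an integrable time weight.

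To close the argument I would restrict to $[0,1\wedge\tau^R]$, on which $\|v(t)-S(t)h_0\|_{H^{1+\epsilon}}\leq R$ bounds $\|v\|_{\epsilon,\cdot}$ deterministically, and work on the event of probability tending to $1$ where $z^{\Phi_N}$ is uniformly close to $z$. The estimates above give, on any subinterval of length $\delta$,
\[
  \|w\|_{\epsilon,\delta} \leq C(R)\,\delta^\theta\bigl(\|w\|_{\epsilon,\delta} + \|w\|_{\epsilon,\delta}^2\bigr) + \|r_N\|_\ast,
\]
with $\|r_N\|_\ast\to0$ in probability by \eqref{e:convstab}. Choosing $\delta$ deterministically small (depending only on $R$) so that $C(R)\delta^\theta<\tfrac12$ absorbs the linear term, a continuity bootstrap keeps $\|w\|_{\epsilon,\delta}$ below the threshold where the quadratic term is dominated; iterating over the finitely many (order $1/\delta$) subintervals covering $[0,1\wedge\tau^R]$ propagates the smallness to the fixed horizon, and the identity $w(0)=0$ (both solutions share the initial datum $S(\cdot)h_0$) together with continuity upgrades the weighted bound to $\sup_{[0,1\wedge\tau^R]}\|w\|_{H^{1+\epsilon}}\to0$ in probability. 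I expect the main obstacle to be twofold: first, the precise Sobolev–time bookkeeping in the two source estimates, where the choice of $(\alpha,q,\beta,q')$ is forced and leaves essentially no slack; and second, running the super-linear Volterra estimate up to the non-small horizon $1\wedge\tau^R$ rather than merely on a short interval, which requires controlling $\B(w,w)$ and re-initializing the singular weighted norm at positive times.
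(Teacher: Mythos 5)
Your proposal is correct and, up to the final absorption step, is exactly the paper's argument: the paper likewise subtracts the two mild formulations, rewrites the difference $d^\Phi = v^\Phi - v$ via bilinearity as
\[
  d^\Phi(t) = \int_0^t S(t-s)\bigl[\B(d^\Phi,d^\Phi) + 2\B(v,d^\Phi) + \B(d^\Phi,z)
    + \B(d^\Phi,z^\Phi-z) + \B(v,z^\Phi-z) + \B(z^\Phi,z^\Phi)-\Bt(z,z)\bigr]\,ds,
\]
and estimates each term with the same smoothing bounds (Lemma \ref{l:Bsquare}, Corollary \ref{c:mixed}, and the $\Bt(z,z)$-type estimate) with the same parameter roles, bundling the source into the error $\mathcal{E}^{\Phi_N}$ controlled by \eqref{e:convstab}. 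Where you diverge is in closing the super-linear Volterra inequality. The paper introduces the stopping time $\tau^\star \eqdef \inf\{t:\|d^\Phi\|_{H^{1+\epsilon}}>1\}\wedge\tau^R\wedge 1$, which linearizes the quadratic term for free ($\|d^\Phi\|^2\leq\|d^\Phi\|$ up to $\tau^\star$), then applies the singular-kernel Gronwall lemma of Henry \cite{DH81} \emph{once} on the whole interval to get $\sup_{[0,\tau^\star]}\|d^\Phi\|_{H^{1+\epsilon}}\leq C(\omega)\delta$ on the event $\{\mathcal{E}^{\Phi_N}\leq\delta\}$; a posteriori, on $\{C(\omega)<\delta_0/\delta\}$ this forces $\tau^\star\geq 1\wedge\tau^R$, and Markov's inequality with the choice $\delta=\sqrt{\smash[b]{\E\mathcal{E}^{\Phi_N}}}$ gives convergence in probability. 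Your subinterval iteration with deterministic-in-$R$ length $\delta$ plus a continuity bootstrap for $\B(w,w)$ is a sound alternative, but it must confront precisely the two obstacles you flag: the coefficient of the linear term contains the random quantity $\|z\|_{L^q([0,1];W^{\alpha,q})}$, so either $\delta$ is random or you must first restrict to a further high-probability event $\{\|z\|_{L^qW^{\alpha,q}}\leq M\}$; and re-initialization at positive times requires propagating $\|w(t_i)\|_{H^{1+\epsilon}}$ through $S(t-t_i)w(t_i)$ (harmless, since $w(t_i)\in H^{1+\epsilon}$, but it is bookkeeping the Gronwall-plus-stopping-time route avoids entirely). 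In short: same decomposition, same estimates, same stochastic conclusion; the paper's threshold-$1$ stopping time and Henry's lemma replace your interval iteration and bootstrap, buying a one-shot, cleaner closure at the fixed horizon.
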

The proof of these results is given in Section \ref{sec:stab}.
Here we illustrate examples of applications of the results presented above.
\begin{remark}
  For every $N\geq1$ define $\Phi_Ne_m = e_m$ if $|m|\leq N$,
  and $0$ otherwise. The spectral truncations $\Phi_N$ are clearly
  regularizing and all assumptions of Theorems \ref{thm:or2},
  and \ref{thm:konsist} hold true. Indeed, the two theorems
  find a non--trivial application when one needs to control
  the off--diagonal terms.
\end{remark}
\begin{example}\label{exp:or1}
  Given a non--negative smooth function $q$ with support contained
  in a small neighbourhood of the origin and such that $\int q(x)\,dx=1$,
  let $q_N$ be the periodic extension on $\torus$ of $z\mapsto q(Nz)$.
  Let 
  \[
    \Phi_N f(x)
      = \int_{[0,2\pi]^2} N^2 q_N(x-y)f(y)\,dy
  \]
  The operators $\Phi_N$ are self--adjoint and diagonal in the
  Fourier basis. Denote by $\phi_k^N$ the eigenvalues
  of $\Phi_N$. These are (up to constant) given by the Fourier
  coefficients of $z\mapsto N^2 q(Nz)$.
  
  Assume that $q\in H^\eta$ for some $\eta>0$ and that
  $\phi_k^N\to1$ as $N\to\infty$, which is easy to verify. 
  Then it is straightforward to check
  that the assumptions of Lemma \ref{l:admissible} and of
  Theorems \ref{thm:or2} and \ref{thm:konsist} are verified.
  We comment in more details in Example \ref{exp:or1bis}.
\end{example}
\begin{example}\label{exp:or2}
  Here we study a non-diagonal case, which is for instance given by noise not homogeneous in space \cite{DB05}. 
  Define 
  \[
    \Phi_N f(x)
      = \int_{[0,2\pi]^2} {N^2} q_N\big( x, y)\big)f(y)\,dy
  \]
  with a kernel $q_N$ which determined by a
  non-negative, smooth  $q$ such that
  the support of $q$  is contained in a small neighbourhood 
  of the diagonal $x=y$ such that $\int_{\torus\times\torus}q(x,y)dxdy=1$.
  The kernel $q_N$ 
  is the periodic extension on $\torus\times\torus$ of $(\xi,\eta)\mapsto q(N\xi,N\eta)$.

    We can now again check all assumptions of Lemma \ref{l:admissible} 
    and  Theorem \ref{thm:or2} and \ref{thm:konsist}.
  As before, let $\Psi^{(N)}=\pi_N$ be the projection onto the first
  Fourier modes. Again, \eqref{e:absconv} is true, once $q$ is
  sufficiently smooth.
   
  The bounds in (\ref{e:Phibound1}) and  (\ref{e:Phibound2})  are easy to establish, 
  as we verify later that $c_{m.n}$ is uniformly bounded.
  The crucial condition is (\ref{e:thmor2}), which requires 
  some work and does not seem hold for arbitrary kernels. 
  We comment on all these assumptions in detail later
  in Example \ref{exp:or2bis}.
\end{example}
\subsection{Rougher noise}

As it is apparent by the previous sections,
space--time white noise is the borderline case between
the standard theory for mild solutions and the additional
work summarized by Theorem \ref{t:mild}. It is then possible
to consider rougher noise.

In view of the computations needed to define $\Bt(z,z)$
(Lemma \ref{l:wick}) it is reasonable to consider
a simplified case, namely when the covariance operator we
apply to white noise is diagonal in the Fourier basis.
Consider a bounded linear operator $\Phi$ on $\Lz^2(\torus)$
and assume for the rest of this section the following properties,
\begin{itemize}
  \item $\Phi\e_k = \phi_k e_k$ for every $k\in\Z^2_\star$,
  \item there is $\beta>0$ such that
    $|\phi_k|^2\leq c|k|^\beta$
     for every $k\in\Z^2_\star$.
\end{itemize}
This situation is similar to Example \ref{exp:or1} before,
when we consider kernels $q$ given by a distribution
instead of a function.

The value $\beta=0$ is morally the space--time white noise.
Moreover, the definition of $\Bt(z,z)$ imposes a structural
restriction that limits the range of possible values of
$\beta$ to $\beta<1$ (see Remark \ref{r:threshold}).

The same ideas of Section \ref{s:mild1}, when slightly modified
to take into account the parameter $\beta$, lead to the
following result.
\begin{theorem}\label{t:mild2}
  Assume $\beta\in(0,\tfrac23)$ and let $h_0\in \Hz^1$
  and $\epsilon\in(\tfrac\beta2,(1-\beta)\wedge\tfrac12)$.
  Given a cylindrical Wiener process $W$ on $\Lz^2(\torus)$,
  there exist a stopping time $\tau_{h_0}$ and a solution
  $h$ of \eqref{e:hphi} understood as $h = v + z$, where
  $z$ is given by \eqref{e:zphi} and $h-z$ satisfies the
  mild formulation \eqref{e:vmild} on $[0,\tau_{h_0})$.
  Moreover, $h\in C([0,\tau_{h_0});\Lz^2(\torus))$,
  $h - z\in C((0,\tau_{h_0});\Hz^{1+\epsilon})$
  and $\Prob[\tau_{h_0}>0]=1$.
\end{theorem}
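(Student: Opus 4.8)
The plan is to adapt the fixed-point argument behind Theorem~\ref{t:mild} to the $\beta$-rougher setting, tracking how the extra $\beta$ derivatives lost in the noise propagate through every estimate. First I would record the regularity of the rough part $z$ in the diagonal case $\Phi e_k=\phi_k e_k$ with $|\phi_k|^2\lesssim|k|^\beta$. A direct computation of $\E\|z(t)\|_{H^{s}}^2$ shows that the stochastic convolution now lives in $H^{s}_\wp$ only for $s<1-\beta$ (rather than $s<1$), which is exactly the source of the constraints $\beta<1$ and $\epsilon<1-\beta$ on the parameters. The gain of two derivatives coming from the bi-Laplacian semigroup $S(t-s)=\e^{-(t-s)\Delta^2}$ is unchanged, so $z$ is still slightly better than $H^{-\beta}$-valued and continuous in time; I would state this as the analogue of the regularity lemma used in Section~\ref{s:mild1}.

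Next I would re-examine the renormalized term $\Bt(z,z)$. The key point is that the cancellation exploited for white noise---the Laplace in front of $|\nabla z|^2$ annihilates the divergent constant in the Wick expansion---is purely algebraic and survives for $\beta>0$, provided the diagonal structure of $\Phi$ holds. So I would invoke the Gaussian/Wick computation (the analogue of Lemma~\ref{l:wick}) to define $\Bt(z,z)$ as a spectral-Galerkin limit, now landing in $H^{-2-\gamma}_\wp$ for an appropriately $\beta$-shifted $\gamma$; the convergence of the Fourier series is controlled by a sum whose summability forces $\beta<1$, matching Remark~\ref{r:threshold}. This is where the restriction $\beta\in(0,\tfrac23)$ should really emerge: the extra margin below $1$ is dictated not by $\Bt(z,z)$ alone but by the requirement that the remainder $v$ be regular enough to close the fixed point.

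With $z$ and $\Bt(z,z)$ in hand, the core step is the contraction estimate for the map
\[
  v\longmapsto S(t)h_0
    - \int_0^t S(t-s)\B(v,v)\,ds
    - \int_0^t S(t-s)\bigl(\Bt(z,z)+2\B(z,v)\bigr)\,ds
\]
in $\X(\epsilon,\rho,T)$. I would bound each of the three contributions separately using the smoothing $\|S(t-s)u\|_{H^{1+\epsilon}}\lesssim(t-s)^{-\theta}\|u\|_{H^{r}}$ together with the bilinear estimates for $\B$, the product estimates in Sobolev spaces, and the Hölder-type time weight $t^{\epsilon/4}$ built into $\|\cdot\|_{\epsilon,T}$. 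The constraint $\epsilon>\tfrac\beta2$ enters precisely in the cross term $\B(z,v)=\Delta(\nabla z\cdot\overline{\nabla v})$, where $v\in H^{1+\epsilon}$ must compensate for $z$ only being in $H^{1-\beta-}$ so that the product is paired against the two derivatives of $\Delta$ with a net positive smoothness; this is the delicate balance that pins down the admissible window $\epsilon\in(\tfrac\beta2,(1-\beta)\wedge\tfrac12)$. I expect the main obstacle to be exactly this cross-term estimate, since here one cannot use renormalization and must instead trade regularity between $z$ and $v$; the arithmetic of the Sobolev exponents has to leave a strictly positive power of $(t-s)$ that is still integrable near $s=t$, and it is the tightness of this condition that explains why $\beta$ cannot reach $1$ and in fact is capped at $\tfrac23$. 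Once the time-weighted norms are shown to contract for $T$ small (with the stochastic terms frozen on the event where their norms are finite), the standard fixed-point/stopping-time construction used for Theorem~\ref{t:mild} carries over verbatim, yielding the stopping time $\tau_{h_0}$ with $\Prob[\tau_{h_0}>0]=1$ and the claimed continuity $h\in C([0,\tau_{h_0});\Lz^2)$, $h-z\in C((0,\tau_{h_0});\Hz^{1+\epsilon})$.
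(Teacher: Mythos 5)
Your overall architecture matches the paper's (decomposition $h=v+z$ with diagonal $\Phi$, Wick/Galerkin definition of $\Bt(z,z)$, weighted fixed point in $\X(\epsilon,\rho,T)$), but there is a genuine quantitative gap in the regularity bookkeeping, and it breaks the stated parameter range. The Sobolev regularity you assign to the stochastic convolution is wrong: since $\E|\phi_k z_k(t)|^2\lesssim |k|^{\beta}\cdot|k|^{-4}$, one has $\sup_{t>0}\E\bigl[\|z(t)\|^p_{W^{s,p}}\bigr]<\infty$ for all $s<1-\tfrac\beta2$, not $s<1-\beta$ as you claim (this is exactly the lemma the paper proves at the start of its rougher-noise section). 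The factor of two matters. In the cross term $\B(z,v)$ your claimed regularity $z\in H^{1-\beta-}$ would force $\epsilon>\beta$ (net smoothness count $\epsilon-\beta>0$), whereas the correct threshold, fed into Corollary \ref{c:mixed} with $z\in W^{\alpha,q}$, $\alpha<1-\tfrac\beta2$ and $1-\alpha+\tfrac2q<\epsilon\wedge\gamma$, yields precisely $\epsilon>\tfrac\beta2$, the lower edge of the theorem's window. Run consistently, your numbers give the window $\epsilon\in(\beta,1-\beta)$, which is nonempty only for $\beta<\tfrac12$; your argument therefore cannot reach $\beta\in[\tfrac12,\tfrac23)$, nor the claimed range $\epsilon\in(\tfrac\beta2,\beta]$ even for small $\beta$.

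You also misattribute the upper constraint $\epsilon<1-\beta$, and hence the cap $\beta<\tfrac23$. It comes neither from $z$'s Sobolev regularity nor from the cross-term arithmetic: the cross term alone closes for $\epsilon\in(\tfrac\beta2,1-\tfrac\beta2)$, i.e.\ for every $\beta<1$. The binding term is $\mathcal{I}_3=\int_0^tS(t-s)\Bt(z,z)\,ds$. For $\beta>0$ the Wick computation (via Lemma \ref{l:sumsum}, with sums of the type $\sum_{m+n=k}|m|^{\beta-2}|n|^{\beta-2}$, convergent only for $\beta<1$) gives $\Bt(z,z)\in\Hz^{-2-\gamma'}$ only for $\gamma'>\beta$, and the smoothing estimate
\[
  \bigl\|A^{\frac14(3+\epsilon+\gamma')}S(t-s)A^{-\frac14(2+\gamma')}\Bt(z,z)\bigr\|_{L^2}
\]
is time-integrable (modulo the $t^{\epsilon/4}$ weight) only if $\gamma'+\epsilon<1$; intersecting $\gamma'\in(\beta,1-\epsilon)$ with the floor $\epsilon>\tfrac\beta2$ from the cross term gives $\tfrac\beta2<1-\beta$, i.e.\ $\beta<\tfrac23$. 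So the cap is produced by the interplay between the regularity ceiling that $\Bt(z,z)$ imposes on $v$ and the regularity floor that $\B(z,v)$ demands of $v$ --- exactly the content of the paper's remark following its proof. The remaining steps of your outline (the surviving algebraic cancellation for diagonal $\Phi$, the $\beta<1$ summability threshold, the contraction and stopping-time construction) are correct and coincide with the paper's.
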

The restriction $\beta<\tfrac23$ is due to the term
$\B(v,z)$ in the mild formulation \eqref{e:vmild}.
When the noise is too rough, the auxiliary function
$v$ is not enough regular to ensure that the product
$\B(v,z)$ is well--defined. 

Assume now $\beta\in[\tfrac23,1)$. To overcome the
difficulty caused by the poor regularity of both $v$
and $z$, we add a term in the second Wiener chaos
in the decomposition of $h$, namely $h = u + \zeta + z$,
where $\zeta$ solves
\[
  \dot\zeta + A\zeta + \Bt(z,z) = 0,
    \qquad
  \zeta(0) = 0,
\]
and $u$ is the mild solution of
\[
  \dot u + Au + \B(u,u) + 2\B(u,z) + 2\B(u,\zeta)
     + 2\Bt(\zeta,z) + \B(\zeta,\zeta) = 0,
\]
with initial condition $u(0) = h(0)$. To this end
we need to suitably define $\Bt(\zeta,z)$ as we
have already done for $\Bt(z,z)$, by exploiting the
cancellations in the expectations of these processes.
This gives no gain for $\zeta$ (we have already
``used'' the cancellation to define $\Bt(z,z)$)
but it is effective both in improving the regularity
of $\B(\zeta,\zeta)$ (with respect to what we
would get from standard multiplication theorems
in Sobolev spaces), and in defining $\Bt(\zeta,z)$.

Actually, the approach through the higher Wiener chaos
expansion of $h$ can be used for any value of
$\beta\in(0,1)$. Indeed, it is sufficient to define
$\Bt(\zeta,z) = \B(\zeta,z)$ whenever the latter
term is well defined (see Remark \ref{r:extend}).
We are able then to prove the following result.
\begin{theorem}\label{t:mild3}
  Assume $\beta\in(0,1)$ and let $h_0\in \Hz^1$
  and $\epsilon\in(\tfrac\beta2,\tfrac12)$.
  Given a cylindrical Wiener process $W$ on $\Lz^2(\torus)$,
  there exist a stopping time $\tau_{h_0}$ and a solution
  $h$ of \eqref{e:hphi} understood as $h = u + \zeta + z$,
  where $z$ is given by \eqref{e:zphi}, $\zeta$ by
  \eqref{e:zeta} and $u= h-z-\zeta$ satisfies the
  mild formulation \eqref{e:higher} on $[0,\tau_{h_0})$.
  Moreover, $h\in C([0,\tau_{h_0});\Lz^2(\torus))$,
  $h - z - \zeta\in C((0,\tau_{h_0});\Hz^{1+\epsilon})$
  and $\Prob[\tau_{h_0}>0]=1$.
\end{theorem}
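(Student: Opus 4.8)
The plan is to keep the decomposition $h=u+\zeta+z$ fixed and to solve the mild equation \eqref{e:higher} for the smoothest component $u$ by a contraction argument in $\X(\epsilon,\rho,T)$, following almost verbatim the scheme used for Theorem~\ref{t:mild}. The only genuinely new ingredients are the two forcing terms $2\Bt(\zeta,z)$ and $\B(\zeta,\zeta)$ generated by the auxiliary process $\zeta$: once these are given a rigorous meaning and estimated in a sufficiently regular negative Sobolev space, the fixed point runs exactly as in the two--term case. I would therefore split the work into: (i) the pathwise regularity of the random data $z$, $\Bt(z,z)$ and $\zeta$; (ii) the definition and estimate of the mixed terms $\Bt(\zeta,z)$ and $\B(\zeta,\zeta)$; and (iii) the deterministic fixed point.

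For (i), the covariance of the stochastic convolution gives $\E|z_k(t)|^2\lesssim|k|^{\beta-4}$ for the $k$-th Fourier coefficient, whence $\E\|z(t)\|_{\Hz^s}^2<\infty$ for every $s<1-\tfrac\beta2$; by Gaussianity and a Kolmogorov continuity argument $z\in C([0,T];\Hz^{1-\beta/2-\delta})$ almost surely, for every $\delta>0$. A second--moment (Wick) computation as in Lemma~\ref{l:wick}, now carrying the weights $|\phi_k|^2\leq c|k|^\beta$, gives $\E|\widehat{\Bt(z,z)}_k|^2\lesssim|k|^{2+2\beta}$, so that $\Bt(z,z)\in\Hz^{-2-\beta-\delta}$ (the borderline $\beta<1$ being precisely the condition of Remark~\ref{r:threshold} for this variance to be finite). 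Feeding this into $\zeta(t)=-\int_0^tS(t-s)\Bt(z,z)(s)\,ds$ and using the smoothing bound $\|S(t)f\|_{\Hz^a}\lesssim t^{-(a-b)/4}\|f\|_{\Hz^b}$ yields $\zeta\in C([0,T];\Hz^{2-\beta-\delta})$, i.e.\ $\zeta$ is essentially two derivatives smoother than $z$.

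Step (ii) is the heart of the matter. Since $\zeta$ lies in the second and $z$ in the first Wiener chaos, the mean $\E[\nabla\zeta\cdot\nabla z]$ vanishes, so $\Bt(\zeta,z)$ needs no subtraction; I would instead define it as the limit in $L^2(\Omega;L^q([0,T];\Hz^{-2-\beta/2-\delta}))$ of the spectral Galerkin products $\B(\zeta^{(N)},z^{(N)})$, proving the increments are Cauchy by a second--moment estimate. Writing $\widehat{\Bt(\zeta,z)}_k=|k|^2\sum_{m+n=k}(m\cdot n)\,\zeta_m z_n$ and inserting $\E|\zeta_m|^2\lesssim|m|^{2\beta-6}$, $\E|z_n|^2\lesssim|n|^{\beta-4}$, the dominant pairing gives $\E|\widehat{\Bt(\zeta,z)}_k|^2\lesssim|k|^{2+\beta}$, hence $\Bt(\zeta,z)\in\Hz^{-2-\beta/2-\delta}$; the analogous purely quadratic computation sharpens $\B(\zeta,\zeta)$ to $\Hz^{-1-\beta-\delta}$, strictly better than the $\Hz^{-1-2\beta}$ one gets from the Sobolev multiplication theorem, and it is exactly this gain that keeps both terms above the threshold $-3+\epsilon$ for all $\beta<1$. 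The main obstacle is precisely the bookkeeping of these chaos expansions: one must verify that the oscillatory factor $m\cdot n$ together with the factor $|k|^2$ from $\Delta$ (which annihilates the resonant mode $k=0$) renders the frequency sums convergent up to $\beta<1$, and that this control is uniform along the Galerkin truncations so that the limit defining $\Bt(\zeta,z)$ exists.

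Finally, for (iii) I run the fixed point for
\[
  \Gamma(u)(t)=S(t)h_0-\int_0^tS(t-s)\bigl[\B(u,u)+2\B(u,z)+2\B(u,\zeta)+2\Bt(\zeta,z)+\B(\zeta,\zeta)\bigr]\,ds
\]
on $\X(\epsilon,\rho,T)$. Combining the smoothing bound with the Sobolev product estimates, every term other than $S(t)h_0$ (which contributes $\|h_0\|_{\Hz^1}$ in $\|\cdot\|_{\epsilon,T}$) carries a positive power $T^\theta$; the term $\B(u,z)$ is the one forcing the lower restriction $\epsilon>\tfrac\beta2$, since $\nabla u\cdot\nabla z$ is well defined only when $\epsilon>\tfrac\beta2$, whereas $\B(u,u)$ is handled exactly as in Theorem~\ref{t:mild} and $\B(u,\zeta)$ is harmless by the regularity of $\zeta$. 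In the differences $\Gamma(u_1)-\Gamma(u_2)$ the $\zeta,z$-only forcings cancel and the remaining terms are bilinear or linear in $u_1-u_2$, so the Lipschitz constant is again $O(T^\theta)$. Choosing $\rho\sim\|h_0\|_{\Hz^1}$ plus the (almost surely finite) forcing norms, and then $T$ small, makes $\Gamma$ map the ball of radius $\rho$ into itself and contract, giving a unique fixed point $u\in\X(\epsilon,\rho,T)$; letting $\tau_{h_0}$ be the maximal existence time obtained by patching these local solutions yields $\Prob[\tau_{h_0}>0]=1$, because $z$, $\zeta$, $\Bt(\zeta,z)$ and $\B(\zeta,\zeta)$ have almost surely finite norms on every bounded interval. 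The stated continuity $h\in C([0,\tau_{h_0});\Lz^2)$ and $u=h-z-\zeta\in C((0,\tau_{h_0});\Hz^{1+\epsilon})$ then follows from the mild formula and parabolic smoothing, as in Theorem~\ref{t:mild}.
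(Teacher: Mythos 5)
Your proposal matches the paper's proof essentially step for step: the same regularity ladder ($z\in W^{s,p}$ for $s<1-\tfrac\beta2$, $\zeta\in\Hz^{2-\beta-\delta}$ by maximal regularity), the same definition of $\Bt(\zeta,z)$ as an $L^2(\Omega)$-limit of spectral Galerkin products exploiting the vanishing cross--expectation, the same improved bounds (your $\Bt(\zeta,z)\in\Hz^{-2-\beta/2-\delta}$ and $\B(\zeta,\zeta)\in\Hz^{-1-\beta-\delta}$ are exactly the paper's Lemmas~\ref{l:addreg2} and \ref{l:addreg1} with $\gamma>\tfrac\beta2$ and $\gamma>\beta-1$), and the same contraction in $\X(\epsilon,\rho,T)$ with $\B(u,z)$ forcing $\epsilon>\tfrac\beta2$ and $\B(u,\zeta)$ harmless. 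Your dominant-pairing computations are precisely the representative Wick classes the paper estimates (it verifies the remaining classes give the same or better bounds, with the $k=0$ class killed by the Laplacian, as you note), so the sketch is correct and takes the same route.
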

\section{The stochastic convolution}\label{s:z}

Let $z$ be the stochastic convolution
\[
  z(t)
    = \int_0^t S(t-s)\,dW_s,
\]
namely the solution of
\[
  dz + Az\,dt = dW,
\]
with initial condition $z(0) = 0$ and zero mean. The stochastic convolution
can be expanded in the complex Fourier basis,
\begin{equation}\label{e:zkappa}
  z = \sum_{k\in\Z^2_\star}z_k e_k,
    \qquad
  z_k(t) = \int_0^t\e^{-|k|^4(t-s)}\,d\beta_k(s),
\end{equation}
where $\beta_k = \scal{W_t,e_k}$, $\beta_{-k} = \bar\beta_k$,
and $(\beta_k)_{k\in\Z^2_+}$
is a sequence of independent complex--valued standard Brownian motions.

Due to the bi--Laplace operator, the stochastic convolution is
function--valued. On the other hand the stochastic convolution
is not sufficiently regular to define the non--linear term
$\Delta|\nabla z|^2$ as a function (and neither as a
distribution), see Lemma \ref{l:fail} below. It turns out that,
suitably defined, the term $\Delta|\nabla z|^2$ makes
sense.
\begin{lemma}\label{l:fail}
  For every $t>0$,
  \[
    \E[\|z(t)\|_{H^1}^2]
      = \infty
  \]
  and $z\not\in\Hz^1$ for all times, almost surely.
\end{lemma}
\begin{proof}
  Using the explicit representation of $z$ in Fourier series,
  \[
    \E[\|\nabla z(t)\|_{L^2}^2]
      = \sum_{k\in\Z^2_\star}|k|^2\int_0^t\e^{-2|k|^4(t-s)}\,ds
      = \sum_{k\in\Z^2_\star}\frac1{2|k|^2}(1 - \e^{-2|k|^4t})
      = \infty.
  \]
  The almost sure statement follows from Gaussianity
  \cite[Theorem 2.5.5]{Bog1998}.
\end{proof}
\subsection{Regularity in Sobolev spaces}

Lemma \ref{l:fail} above shows that the gradient of $z$ is not
defined. On the other hand, $z$ has a ``fractional'' derivative
of any order smaller than one.
\begin{proposition}\label{p:zreg}
  For every $p\geq1$ and $s\in(0,1)$, 
  \[
    \sup_{t>0} \E\bigl[\|z(t)\|^p_{W^{s,p}}\bigr]
      <\infty.
  \]
\end{proposition}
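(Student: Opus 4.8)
The plan is to exploit the Gaussian structure of $z(t)$ together with the intrinsic (Slobodeckij) description of the fractional Sobolev norm. For $s\in(0,1)$ one has
\[
  \|u\|_{W^{s,p}}^p
    = \|u\|_{L^p}^p
      + \int_{\torus}\int_{\torus}
        \frac{|u(x)-u(y)|^p}{|x-y|^{2+sp}}\,dx\,dy,
\]
so by Tonelli it suffices to bound $\E[|z(t,x)|^p]$ and $\E[|z(t,x)-z(t,y)|^p]$ pointwise and integrate. For each fixed $x$ (resp.\ pair $x,y$) the quantity $z(t,x)$ (resp.\ $z(t,x)-z(t,y)$) is a centred real Gaussian, so its $p$--th absolute moment equals $C_p$ times the $p$--th power of its standard deviation. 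This reduces everything to estimating two variances, uniformly in $t$, and lets us treat all $p\ge1$ simultaneously.

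First I would treat the $L^p$ term. Using the expansion \eqref{e:zkappa} and the covariance structure $\E[z_k(t)\overline{z_l(t)}]=\delta_{kl}\E[|z_k(t)|^2]$, the pointwise variance is
\[
  \E[z(t,x)^2]
    = \sum_{k\in\Z^2_\star}|e_k(x)|^2\,\E[|z_k(t)|^2]
    = \frac1{(2\pi)^2}\sum_{k\in\Z^2_\star}
        \frac{1-\e^{-2|k|^4t}}{2|k|^4}
    \le \frac1{(2\pi)^2}\sum_{k\in\Z^2_\star}\frac1{2|k|^4},
\]
and the last sum converges in dimension two. Hence $\E[|z(t,x)|^p]\le C_p$ uniformly in $x$ and $t$, and integrating over the torus (which has finite measure) bounds $\E[\|z(t)\|_{L^p}^p]$.

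The heart of the argument is the increment. Setting $r=|x-y|$ and using $|e_k(x)-e_k(y)|^2\le(2\pi)^{-2}\min(4,|k|^2r^2)$, the increment variance obeys
\[
  \sigma^2(x,y)
    \eqdef\E[(z(t,x)-z(t,y))^2]
    \le C\sum_{k\in\Z^2_\star}\frac1{|k|^4}\min\bigl(1,|k|^2r^2\bigr).
\]
Splitting the sum at $|k|\sim 1/r$ and comparing with integrals gives the two contributions $r^2\sum_{|k|\le1/r}|k|^{-2}\lesssim r^2\log(1/r)$ and $\sum_{|k|>1/r}|k|^{-4}\lesssim r^2$, so that $\sigma^2(x,y)\lesssim r^2\log(1/r)$ for small $r$ and stays bounded for $r$ of order the diameter. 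In particular, for any $\delta>0$ we have $\sigma^2(x,y)\lesssim r^{2-\delta}$ uniformly in $t$. Gaussianity then yields $\E[|z(t,x)-z(t,y)|^p]\le C_p\,\sigma^p\lesssim r^{p(1-\delta/2)}$, and the Slobodeckij integral is controlled by
\[
  \int_{\torus}\int_{\torus}
    \frac{r^{p(1-\delta/2)}}{r^{2+sp}}\,dx\,dy
    \lesssim\int_0^{\operatorname{diam}}\rho^{\,p(1-\delta/2-s)-1}\,d\rho,
\]
which is finite as soon as $s<1-\delta/2$. Since $s<1$ is fixed, I would simply choose $\delta$ small enough that $1-\delta/2>s$.

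The only delicate point is the logarithmic borderline in the increment variance: the bound $\sigma^2\lesssim r^2\log(1/r)$ is exactly what prevents reaching $s=1$, and it is why the statement is restricted to $s<1$, consistently with Lemma \ref{l:fail}. Absorbing the logarithm into an arbitrarily small loss $r^{-\delta}$ and paying for it with the available room $1-s>0$ is the crux; everything else rests on the uniform (in $t$) summability $\sum_k|k|^{-4}<\infty$ and the Gaussian moment identity.
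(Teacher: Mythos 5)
Your proof is correct and follows essentially the same route as the paper: the Fourier/Gaussian increment estimate $\E[|z(t,x)-z(t,y)|^2]\lesssim |x-y|^2\log(1/|x-y|)$ obtained by splitting the frequency sum at $|k|\sim|x-y|^{-1}$, Gaussianity to pass to $p$-th moments, and integration of the Slobodeckij seminorm. The only cosmetic differences are that you absorb the logarithm into an arbitrarily small power loss $r^{-\delta}$ paid for by the room $1-s>0$ (the paper keeps the $\log^{p/2}$ factor and integrates it directly against $|x-y|^{-(2-(1-s)p)}$), and that you spell out the $L^p$ part of the norm, which the paper leaves implicit.
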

\begin{proof}
  Use the Fourier representation of $z$ to get,
  \[
    \begin{multlined}[.9\linewidth]
      \E[|z(t,x)-z(t,y)|^2]
        \leq c\sum_{k\in\Z^2_\star}\E[|z_k(t)|^2]|e_k(x)-e_k(y)|^2\leq {}\\
        \leq c\sum_{k\in\Z^2_\star}\frac{1\wedge|k\cdot(x-y)|^2}{|k|^4}
        \leq c|x-y|^2\log(8\pi|x-y|^{-1}),
    \end{multlined}
  \]
  where to estimate the last sum on the right hand side of the formula above
  one can split in the two parts $|k|\geq |x-y|^{-1}$ and
  $|k|\leq |x-y|^{-1}$. By Gaussianity, for every $p\geq1$,
  $\E[|z(t,x)-z(t,y)|^p]\leq c_p|x-y|^p\log^{p/2}(8\pi|x-y|^{-1})$.
  Therefore,
  \[
    \E\Bigl[\iint \frac{|z(t,x)-z(t,y)|^p}{|x-y|^{2+sp}}\,dx\,dy\Bigr]
      \leq c\iint\frac{\log^{p/2}(8\pi|x-y|^{-1})}
        {|x-y|^{2-(1-s)p}}\,dx\,dy
      <\infty.\qedhere
  \]
\end{proof}
\begin{remark}
  The regularity in time stated in the previous proposition can
  be improved, with standard arguments, to $L^\infty$ or even
  H\"older, but we will not use this fact in the paper. 
\end{remark}
\subsection{The non--linearity for the stochastic convolution}

If $u = \sum_{k\in\Z^2}u_k e_k$ and $v = \sum_{k\in\Z^2} v_k e_k$
are real valued, the non--linear term can be formally written in
terms of the Fourier coefficients as
\[
  \B(u,v)
    = \sum_{k\in\Z^2_\star}
        |k|^2 \Bigl(\sum_{m+n=k} m\cdot n\ u_m v_n\Bigr)e_k.
\]
Consider the stochastic convolution $z$ and set for every $k\in\Z^2_\star$,
\begin{equation}\label{e:Jk}
  J_k(t)
    = \sum_{m+n=k} m\cdot n\ z_m(t) z_n(t).
\end{equation}
Formally, $\B(z,z) = \sum_k |k|^2 J_k e_k$. Lemma \ref{l:fail}
immediately tells us that $J_0(t) = - \|\nabla z(t)\|_{L^2} = \infty$
almost surely. Likewise, an investigation of absolute convergence of $J_k(t)$
for $k\neq0$ yields
\[
  \E\Bigl[\smashoperator[r]{\sum_{m+n=k}}|m\cdot n|\,|z_m(t)\,z_n(t)|\Bigr]
    \geq \smashoperator{\sum_{\substack{m+n=k\cr m\neq n}}}|m\cdot n|\E[|z_m(t)|]\E[|z_n(t)|]
    \geq \smashoperator[r]{\sum_{\substack{m+n=k\cr m\neq n}}}\frac{c_t|m\cdot n|}{|m|^2|n|^2}
    = \infty.
\]

Following \cite{DapDeb2002}, we extend the definition of
the non--linearity $\B$ so that the terms $J_k(t)$, for
$k\neq0$, are convergent. This is possible due to
cancellations, since the $z_m$ are centred Gaussians.
The term $J_0(t) = -\|\nabla z(t)\|_{L^2}$ should
be the most problematic, since there is no hope to exploit
any cancellation. On the other hand it is constant in the
space variable and it is cancelled by the Laplace operator.

Given $N\geq 1$, let $H_N$ be the linear sub--space
of $L^2(\torus)$ spanned by $(e_k)_{0<|k|\leq N}$.
Let $\pi_N$ be the projection of $L^2(\torus)$
onto $H_N$ and define
\[
  \B_N(u,v)
    = \B(\pi_N u, \pi_N v).
\]
We extend the operator $\B$ on the non--differentiable
function $z$ as the limit of the sequence $(\B_N(z,z))_{N\geq1}$
in suitable function spaces.
\begin{lemma}\label{l:wick}
  Let $z$ be the stochastic convolution. Then $(\B_N(z,z))_{N\geq1}$
  is a Cauchy sequence in $L^2(\Omega;\Hz^{-2-\gamma})$ for every
  $\gamma>0$. In particular, the limit $\Bt(z,z)$ is well--defined
  as an element of $\Hz^{-2-\gamma}$.
\end{lemma}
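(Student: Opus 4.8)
The plan is to pass to Fourier coordinates, reduce the Cauchy property to the summability of a single deterministic lattice series, and exploit the Gaussian cancellation that has already been observed above: for $k\neq0$ the quantity $J_k$ is a \emph{centred} element of the second Wiener chaos, so only its diagonal pairings survive.

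First I would write $\B_N(z,z) = \sum_{k\in\Z^2_\star} |k|^2 J_k^N e_k$, where $J_k^N = \sum_{m+n=k,\,0<|m|,|n|\le N} m\cdot n\, z_m z_n$ is the truncation of \eqref{e:Jk}. The $k$-th Fourier coefficient of $\B_N(z,z)$ is $|k|^2 J_k^N$, while the $\Hz^{-2-\gamma}$-norm weights mode $k$ by $|k|^{-2(2+\gamma)}$; the two powers of $|k|^2$ combine, so that for $N<M$
\[
  \E\bigl[\|\B_N(z,z) - \B_M(z,z)\|_{\Hz^{-2-\gamma}}^2\bigr]
    = \sum_{k\in\Z^2_\star} |k|^{-2\gamma}\, \E\bigl[|J_k^N - J_k^M|^2\bigr].
\]
Everything then reduces to a good bound on $\E[|J_k^N - J_k^M|^2]$.

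The key structural input is that the complex Gaussians satisfy $\E[z_m z_n]=0$ unless $n=-m$, together with $\E[z_m\bar z_m]=\E[|z_m|^2]\le (2|m|^4)^{-1}$ (the computation in Lemma \ref{l:fail}). This forces $\E[J_k]=0$ for $k\neq0$. For the second moment I would apply Wick's formula to the fourth-order expectation $\E[z_m z_n \bar z_{m'}\bar z_{n'}]$: with the constraints $m+n=k=m'+n'$ and $k\neq0$, the anomalous pairing $\E[z_m z_n]\E[\bar z_{m'}\bar z_{n'}]$ vanishes (it would force $k=0$), leaving only the two diagonal pairings, each contributing $(m\cdot n)^2\E[|z_m|^2]\E[|z_n|^2]$. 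Hence
\[
  \E\bigl[|J_k^M - J_k^N|^2\bigr]
    \le c \sum_{\substack{m+n=k\\ (m,n)\in A_{N,M}}} |m|^2|n|^2\,\E[|z_m|^2]\E[|z_n|^2]
    \le c \sum_{\substack{m+n=k\\ (m,n)\in A_{N,M}}} \frac{1}{|m|^2|n|^2},
\]
where $A_{N,M}$ collects the pairs with both $|m|,|n|\le M$ but at least one exceeding $N$.

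Finally, the Cauchy property follows by dominated convergence once the full untruncated series converges, that is once
\[
  S_\gamma := \sum_{k\in\Z^2_\star} |k|^{-2\gamma} \sum_{m+n=k} \frac{1}{|m|^2|n|^2} < \infty .
\]
I would estimate the inner lattice convolution by comparison with $\int_{\R^2}|x|^{-2}|k-x|^{-2}\,dx$, using a unit-scale cutoff near the two singularities (legitimate on the lattice) and splitting into the regions near $x=0$, near $x=k$, and the bulk $|x|\sim|k-x|\sim|k|$; the two singular regions each contribute $\sim|k|^{-2}\log|k|$ and the bulk $\sim|k|^{-2}$, whence $\sum_{m+n=k}|m|^{-2}|n|^{-2}\le c\,|k|^{-2}\log(2|k|)$. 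Thus $S_\gamma\le c\sum_k|k|^{-2-2\gamma}\log(2|k|)$, which converges in two dimensions precisely because $2+2\gamma>2$ for every $\gamma>0$. The main obstacle is exactly this sharp evaluation of the convolution and the bookkeeping of its logarithmic growth: the borderline nature of the problem is visible in the $\log|k|$, and it is the strict inequality $\gamma>0$ that rescues summability (and explains why $\gamma=0$ fails). Once $S_\gamma<\infty$, the bound on $\E[|J_k^N-J_k^M|^2]$ by a tail of an absolutely convergent series yields the Cauchy property, and completeness of $L^2(\Omega;\Hz^{-2-\gamma})$ produces the limit $\Bt(z,z)$.
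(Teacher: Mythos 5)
Your proposal is correct and follows essentially the same route as the paper: Fourier reduction of the $\Hz^{-2-\gamma}$ norm, the Wick/Gaussian cancellation that kills the anomalous pairing $\E[z_{m}z_{n}]\E[\bar z_{m'}\bar z_{n'}]$ for $k\neq0$ and leaves only the two diagonal pairings, and the resulting reduction to the lattice series $\sum_{m+n=k}|m|^{-2}|n|^{-2}$. The only difference is the final bookkeeping: where you use the critical convolution bound $\sum_{m+n=k}|m|^{-2}|n|^{-2}\leq c\,|k|^{-2}\log(2|k|)$ and conclude qualitatively by dominated convergence on the tails, the paper trades a power of the larger index ($|m|\vee|n|\geq N$) for a factor $N^{-\gamma}$ and applies Lemma \ref{l:sumsum} in the subcritical regime, obtaining the explicit quantitative rate $\E\bigl[\|\B_N(z,z)-\B_{N'}(z,z)\|_{H^{-2-\gamma}}^2\bigr]\leq c\,N^{-\gamma}$ --- both arguments are valid.
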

\begin{proof}
  Let $J_k^N$ be the term analogous to $J_k$ for $\pi_N z$.
  If $N\leq N'$,
  \[
    \E[|J_k^N(t) - J_k^{N'}(t)|^2]
      =\sum_{m_1+n_1=k}^{N\leftrightarrow N'}
         \sum_{m_2+n_2=k}^{N\leftrightarrow N'}
         m_1\cdot n_1\ m_2\cdot n_2
         \E[z_{m_1}z_{n_1}\bar z_{m_2}\bar z_{n_2}],
  \]
  where by the symbol $N\leftrightarrow N'$ in the sum over $m,n$
  we mean that the sum is extended only over those indices $m,n$
  that satisfy $N<|m|\vee|n|\leq N'$.
  
  The sequence $(z_m)_{m\in\Z^+}$ is a family of independent
  centred  Gaussian random variables. Moreover
  $\bar z_m = z_{-m}$. A few elementary considerations
  show that 
  $\E[z_{m_1}z_{n_1}\bar z_{m_2}\bar z_{n_2}]$ is non--zero
  only if $m_1=m_2$ and $n_1=n_2$, or if $m_1=n_2$ and $m_2=n_1$.
  Therefore
  \[
    \begin{multlined}[.9\linewidth]
      \E[|J_k^N(t) - J_k^{N'}(t)|^2]
        = 2\sum_{m+n=k}^{N\leftrightarrow N'}
          (m\cdot n)^2\E[|z_m|^2|z_n|^2] \leq {}\\
        \leq c\sum_{m+n=k}^{N\leftrightarrow N'}
          \frac{(m\cdot n)^2}{|m|^4|n|^4}
          (1-\e^{-2|m|^4t})(1-\e^{-2|n|^4t})
        \leq c\sum_{m+n=k}^{N\leftrightarrow N'}\frac1{|m|^2|n|^2}.
    \end{multlined}
  \]
  The last series above can be estimated with Lemma \ref{l:sumsum},
  indeed since $|m|\vee|n|\geq N$,
  \[
    \sum_{m+n=k}^{N\leftrightarrow N'}\frac1{|m|^2|n|^2}
      \leq \frac{2}{N^\gamma}
        \sum_{\substack{m+n=k\\|n|\leq |m|}}^{N\leftrightarrow N'}
        \frac1{|m|^{2-\gamma}|n|^2}
      \leq \frac{2}{N^\gamma}
        \sum_{m+n=k}^{N\leftrightarrow N'}
        \frac1{|m|^{2-\gamma}|n|^2}
      \leq \frac{c}{N^{\gamma}|k|^{2-\gamma}}.
  \]
  In conclusion,
  \[
    \E[\|\Bt_N(z,z) - \Bt_{N'}(z,z)\|_{H^{-2-\gamma}}^2]
      = \sum_{k\in\Z^2_\star}|k|^{-2\gamma}\E[|J_k^N - J_k^{N'}|^2]
      \leq \frac{c}{N^{\gamma}}\sum_{k\in\Z^2_\star}\frac1{|k|^{2+\gamma}},
  \]
  and the term on the right hand side converges to zero as
  $N,N'\to\infty$.
\end{proof}
\begin{remark}
  In order to define $\Bt$ we have chosen Galerkin projections
  as regularizations of the underlying Wiener--process and passed
  to the limit in order to define the solution. 
  We will see that other regularizations, for example convolution operators, 
  yield exactly the same result.
\end{remark}
We shall need higher moments of $\Bt(z,z)$ for our considerations on
the non--linear problem. We shall derive the claim from
hyper--contractivity of Gaussian measures \cite{Nel1973,Sim1974}.
\begin{proposition}\label{p:nelson}
  Given $\gamma>0$ and $p>1$,
  there is a constant $c>0$ such that 
  \[
    \sup_{t>0}\E\bigl[\|\Bt(z(t),z(t))\|_{H^{-2-\gamma}}^p\bigr]
      \leq c.
  \]
\end{proposition}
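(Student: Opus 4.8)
The plan is to exploit that $\Bt(z,z)$ lives in a single (the second) Wiener chaos, so that all of its $L^p(\Omega)$ moments are controlled by its $L^2(\Omega)$ moment through Nelson's hypercontractivity, and to commute the chaos estimate with the $H^{-2-\gamma}$ norm by Minkowski's inequality. Since the assertion is merely a uniform bound by a constant, and $\E[X^p]\le(\E[X^{p'}])^{p/p'}$ for $p\le p'$ by Jensen, it suffices to treat $p\ge2$; the range $p\in(1,2)$ then follows from the case $p=2$ already contained in Lemma~\ref{l:wick}.

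First I would record the chaos structure of the limit. For each $k\in\Z^2_\star$ the random variable $J_k(t)=\sum_{m+n=k}m\cdot n\,z_m(t)z_n(t)$ is, as shown in the proof of Lemma~\ref{l:wick}, an $L^2(\Omega)$-convergent sum of products $z_m z_n$ with $m+n=k\neq0$, hence $m\neq-n$; these products are centred and belong to the second homogeneous Wiener chaos generated by the Gaussian family $(z_m)$. Since that chaos is a closed subspace of $L^2(\Omega)$, the limit $J_k$ belongs to it as well (treating real and imaginary parts separately, the underlying Gaussians being complex). By Parseval, pathwise,
\[
  \|\Bt(z,z)\|_{H^{-2-\gamma}}^2
    = \sum_{k\in\Z^2_\star}|k|^{-2\gamma}|J_k|^2 .
\]

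Then I would estimate. Fix $p\ge2$, so that $p/2\ge1$ and $\|\cdot\|_{L^{p/2}(\Omega)}$ is a genuine norm. By Minkowski's inequality,
\[
  \E\bigl[\|\Bt(z,z)\|_{H^{-2-\gamma}}^p\bigr]
    = \Bigl\|\sum_{k}|k|^{-2\gamma}|J_k|^2\Bigr\|_{L^{p/2}(\Omega)}^{p/2}
    \le\Bigl(\sum_{k}|k|^{-2\gamma}\|J_k\|_{L^p(\Omega)}^2\Bigr)^{p/2}.
\]
Nelson's hypercontractivity on the second chaos gives $\|J_k\|_{L^p(\Omega)}\le c_p\|J_k\|_{L^2(\Omega)}$ with $c_p$ depending only on $p$ (of order $p-1$, with an extra factor from splitting into real and imaginary parts). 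Substituting, the right-hand side is bounded by $c_p^{\,p}\bigl(\sum_k|k|^{-2\gamma}\E[|J_k|^2]\bigr)^{p/2}=c_p^{\,p}\,\E[\|\Bt(z,z)\|_{H^{-2-\gamma}}^2]^{p/2}$. The remaining second moment is finite and, crucially, uniform in $t$: the bound $\E[|J_k|^2]\le c\sum_{m+n=k}|m|^{-2}|n|^{-2}$ from the proof of Lemma~\ref{l:wick} is independent of $t$ (the factors $1-\e^{-2|m|^4t}$ are $\le1$), and summing against $|k|^{-2\gamma}$ converges by Lemma~\ref{l:sumsum}. This gives the claim.

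The only point requiring genuine care is the first step, namely that the limiting object $\Bt(z,z)$ — which a priori is defined only as an $L^2(\Omega;\Hz^{-2-\gamma})$ limit of Galerkin truncations — indeed sits inside a single Wiener chaos of fixed order, which is precisely what makes hypercontractivity applicable; this is secured by the closedness of the chaos in $L^2(\Omega)$. Alternatively, and equivalently, one may prove the bound for each $\B_N(z,z)$ with a constant independent of $N$ and pass to the limit by Fatou, using that $\B_N(z,z)\to\Bt(z,z)$ in $L^2(\Omega)$, hence almost surely along a subsequence. Everything else is the routine combination of Minkowski's inequality with the $L^2$ estimate already established in Lemma~\ref{l:wick}.
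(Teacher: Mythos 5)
Your proof is correct and takes essentially the same approach as the paper: there, too, the key steps are the uniform-in-$t$ second-moment bound on $J_k^N$ from the computation of Lemma~\ref{l:wick} summed via Lemma~\ref{l:sumsum}, hypercontractivity on the second Wiener chaos (Theorem I.22 of Simon) applied mode-by-mode, and the triangle (Minkowski) inequality in $L^p(\Omega)$ to commute the $H^{-2-\gamma}$ sum with the expectation. Your extra care about the limit $\Bt(z,z)$ staying in the second chaos (equivalently, the uniform-in-$N$ bound plus Fatou) just makes explicit the limit passage the paper leaves implicit when working with the truncations $\B_N(z,z)$.
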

\begin{proof}
  For the second moment we can proceed as in the previous lemma,
  using again the elementary estimate of Lemma \ref{l:sumsum},
  \[
    \E[|J_k^N|^2]
      = \sum_{\substack{m+n=k\\|m|,|n|\leq N}}
          (m\cdot n)^2\E[|z_m|^2|z_n|^2]
      \leq c\sum_{m+n=k}\frac1{|m|^2|n|^2}
      \leq \frac{c}{|k|^2}\log(1+|k|).
  \]
  Thus
  \[
    \E\bigl[\|\B_N(z(t),z(t))\|_{H^{-2-\gamma}}^2\bigr]
      \leq \sum_k |k|^{-2\gamma}\E[|J_k^N|^2]
      \leq c\sum_k \frac{\log(1+|k|)}{|k|^{2+2\gamma}}
  \]
  and the second moment is finite. To prove that all moments are finite,
  consider an integer $p\geq1$. Theorem I.22 of \cite{Sim1974} yields
  that
  \[
    \E[|J_k^N|^{2p}]
      \leq (2p-1)^{2p}\bigl(\E[|J_k^N|^2]\bigr)^p,
  \]
  hence by the H\"older inequality,
  \[
  \begin{multlined}[.8\linewidth]
    \E[\|\B_N(z(t),z(t))\|_{H^{-2-\gamma}}^{2p}]
      = \E\Bigl[\Bigl(\sum_k |k|^{-2\gamma}|J_k^N|^2\Bigr)^p\Bigr]\leq{}\\
      \leq \Bigl(\sum_k |k|^{-2\gamma}\bigl(\E[|J_k^N|^{2p}]\bigr)^{\frac1p}\Bigr)^p
      \leq c_p,
  \end{multlined}
  \]
  and the moment of order $2p$ is uniformly bounded in time.
\end{proof}
\section{Proof of Theorem \ref{t:mild}}

Fix $\epsilon\in(0,\tfrac12)$ and $h_0\in\Hz^1$.
Let $\mathcal{T}$ be the operator that takes as its value
the right--hand side of \eqref{e:vmild}, namely
\[
  \T v(t)
    \eqdef S(t)h_0
        - \int_0^t S(t-s)\B(v,v)\,ds
        - \int_0^t S(t-s)\bigl(\Bt(z,z) + 2\B(z,v)\bigr)\,ds.
\]

We use the standard contraction fixed point theorem. To this end we
show that by choosing $\rho,T$ suitably, $\T$ maps $\X(\epsilon,\rho,T)$
into itself. By possibly taking a smaller value of $\rho$, $\T$ is
also a contraction.
\subsubsection*{The self--mapping property}

Set $R_{h_0}(T)\eqdef\sup_{[0,T]}t^{\frac\epsilon4}
\|S(t)h_0\|_{H^{1+\epsilon}}$, then it is easy to show
(\cite{FujKat1964}, see also \cite[Lemma C.1]{BloRom2009})
that $R_{h_0}(T)\longrightarrow0$ as $T\to0$.
Continuity of $\T v$ in $L^2(\torus)$ is standard
(see \cite[Lemma C.1]{BloRom2009}).
Moreover, by Lemma \ref{l:Bsquare}, with
$a\in[1-2\epsilon,1-\epsilon)$, if $t\leq T$,
\[
  \begin{aligned}
    \Bigl\|\int_0^t S(t-s)\B(v,v)\,ds\Bigr\|_{H^{1+\epsilon}}
      &\leq \int_0^t \|A^{\frac14(3+a+\epsilon)}S(t-s)
              A^{-\frac14(a+2)}\B(v,v)\|_{L^2}\,ds\\
      &\leq \int_0^t \frac{c}{(t-s)^{\frac14(3+a+\epsilon)}}
              \|v(s)\|_{H^{1+\epsilon}}^2\,ds\\
      &\leq ct^{-\frac\epsilon4}\|v\|_{\epsilon,T}^2t^{\frac14(1-a-2\epsilon)}.
  \end{aligned}
\]

The mixed term is estimated with the help of Corollary \ref{c:mixed},
with $\gamma<1-\epsilon$, $\alpha=1-(\epsilon\wedge\gamma)/2$ and
$q>4/(\epsilon\wedge\gamma\wedge (1-\epsilon-\gamma))$, and the H\"older inequality,
\[
  \begin{aligned}
    \Bigl\|\int_0^t S(t-s)\B(v,z)\,ds\Bigr\|_{H^{1+\epsilon}}
      &\leq\int_0^t \|A^{\frac14(3+\epsilon+\gamma)}S(t-s)
        A^{-\frac14(2+\gamma)}\B(v,z)\|_{L^2}\,ds\\
      &\leq \|v\|_{\epsilon,T} \int_0^t\frac{c\|z\|_{W^{\alpha,q}}}
        {(t-s)^{\frac14(3+\epsilon+\gamma)}s^{\frac\epsilon4}}\,ds\\
      &\leq ct^{-\frac\epsilon4}T^{e_1}\|v\|_{\epsilon,T}
        \Bigl(\int_0^T\|z\|_{W^{\alpha,q}}^q\,ds\Bigr)^{\frac1q}.
  \end{aligned}
\]
where $e_1 = \tfrac14(1-\epsilon-\gamma)-\tfrac1q$ is positive and the integrals
are well defined by the choice of $q$.

Finally for the quadratic term in $z$ we use Proposition \ref{p:nelson}.
Let $2<\gamma'<3-\epsilon$ and $p'$ such that
$\tfrac14p'(1+\gamma'+\epsilon)<1$, then by the H\"older inequality,
\[
\begin{aligned}
  \Bigl\|\int_0^tS(t-s)\Bt(z,z)\,ds\Bigr\|_{H^{1+\epsilon}}
    &\leq \int_0^t \bigl\|A^{\frac14(1+\epsilon+\gamma')}S(t-s)
      \bigl(A^{-\frac{\gamma'}4}\Bt(z,z)\bigr)\bigr\|_{L^2}\,ds\\
    &\leq \Bigl(\int_0^t(t-s)^{-\frac14(1+\epsilon+\gamma')p'}\,ds\Bigr)^{\frac1{p'}}
      \Bigl(\int_0^t \|\Bt(z,z)\|_{H^{-\gamma'}}^{q'}\,ds\Bigr)^{\frac1{q'}}\\
    &\leq c t^{-\frac\epsilon4} T^{e_2}
      \Bigl(\int_0^T \|\Bt(z,z)\|_{H^{-\gamma'}}^{q'}\,ds\Bigr)^{\frac1{q'}},
\end{aligned}
\]
where $q'$ is the H\"older conjugate exponent of $p'$ and
$e_2 = \tfrac1{p'}-\tfrac14(\gamma'+1)$ is positive.

The three estimates together yield
\begin{equation}\label{e:self}
  \|\T v\|_{\epsilon,T}
    \leq R_{h_0}(T)
      + c_0\rho^2
      + c_0\rho T^{e_1}Z_1(T)
      + c_0 T^{e_2} Z_2(T),
\end{equation}
where $Z_1(T)$ is the norm of $z$ in $L^q(0,T;\Wz^{\alpha,q})$,
and $Z_2(T)$ is the norm of $B(z,z)$ in $L^{q'}(0,T;H^{-\gamma'}_\wp)$.
All the quantities in the displayed formula above converge to $0$
as $T\to0$, so for $T$ small enough we can find a positive value of $\rho$
that satisfies the self--mapping property.
\subsubsection*{The contraction property}

The contraction property follows from similar estimates.
Let $v_1,v_2\in\X(\epsilon,\rho,T)$, then
\[
  \T v_1(t) - \T v_2(t)
    = \int_0^t S(t-s)\B(v_1+v_2,v_2-v_1)\,ds
      + 2\int_0^t S(t-s)\B(z,v_2-v_1).
\]
We use Lemma \ref{l:Bsquare} for the first term and Corollary
\ref{c:mixed} for the second term (with the same choice for
the value of the parameters as the previous part),
\begin{equation}\label{e:contract}
  \begin{aligned}
    \|\T v_1 - \T v_2\|_{\epsilon,T}
      &\leq c\|v_1+v_2\|_{\epsilon,T}\|v_1-v_2\|_{\epsilon,T}
         + cT^{e_1}Z_1(T)\|v_1-v_2\|_{\epsilon,T}\\
      &\leq c_0\bigl(\rho + T^{e_1}Z_1(T)\bigr)\|v_1 - v_2\|_{\epsilon,T}.
  \end{aligned}
\end{equation}
and again by choosing $T$ small enough the mapping is a contraction.

Given $a\in(0,1)$ and $b\in\bigl(0,\smash{\tfrac{a^2}{4c_0}}\bigr)$,
where $c_0$ is the constant appearing in \eqref{e:self} and
\eqref{e:contract}, choose $\rho$ such that $c_0\rho^2-a\rho+b\leq0$.
Let
\[
  \tau_a^c
    \eqdef\inf\{t: c_0t^{e_1}Z_1(t)>1-a\},
      \qquad
  \tau_b^s
    \eqdef\inf\{t: R_{h_0}(t) + c_0t^{e_2}Z_2(t)>b\},
\]
and choose $T<\tau_a^c\wedge\tau_b^s$. With these choices and
positions, it is immediate to verify that the right--hand side
of \eqref{e:self} is smaller or equal than $\rho$ and that the
Lipschitz constant of $\T$ appearing in \eqref{e:contract}
is smaller than $1$. It turns out that
$\tau_{h_0}\geq	\tau_a^c\wedge\tau_b^s$ and, since
$\tau_a^c>0$ and $\tau_b^s>0$ almost surely, the same
holds for $\tau_{h_0}$.
\section{Other regularizations}

Let $\Phi$ be a bounded operator on $\Lz(\torus)$ and consider the
associated stochastic convolution,
\[
  z^\Phi(t)
    = \int_0^t S(t-s)\Phi\,dW(s)
    = \sum_{k\in\Z^2_\star}z_k^{\Phi}(t)e_k,
\] 
where
\[
  z_k^\Phi(t)
    = \int_0^t \e^{-(t-s)|k|^4}\,d\beta_k^\Phi,
      \qquad\text{and}\qquad
  \beta_k^\Phi(t)
    =\langle \Phi W(t), e_k\rangle,
\]
and the $\beta_k^\Phi$ are Brownian motions. These are in general
non independent, unless $\Phi$ is diagonal in the Fourier-basis $e_k$.
We recall that $\Phi$ is regularizing if it satisfies the conclusions
of Lemma \ref{l:admissible}, whose proof is given below.
\begin{proof}[Proof of Lemma \ref{l:admissible}]
  Condition \eqref{e:absconv} ensures that the terms $J_k$ defined
  in \eqref{e:Jk} are {a.s.} absolutely convergent, hence the
  computations in Lemma \ref{l:wick} can be made rigorously for
  $z^\Phi$ without relying on the spectral truncation.
\end{proof}
\subsection{Good approximations}

Here we prove Theorems \ref{thm:or2} and \ref{thm:konsist}.
The proof of Theorem \ref{thm:or2} is a straightforward
modification of Proposition \ref{p:zreg}.
\begin{proof}[Proof of Theorem \ref{thm:or2}]
  Let $\gamma\in(0,1)$ be the value given in the statement of
  the theorem. If $x,y\in\torus$,
  \[
  \begin{aligned}
    \lefteqn{\E\bigl[|z^{\Phi_N}(t,x)-z(t,x)- z^{\Phi_N}(t,y)+z(t,y)|^2\bigr]}\qquad\\ 
      &=\sum_{m,n}\E\bigl[(z_m^{\Phi_N}(t)-z_m(t))
        \overline{(z_n^{\Phi_N}(t)-z_n(t)}\bigr]
        (e_m(x)-e_m(y))(\overline{(e_n(x)-e_n(y))}\\
      &\leq C\sum_{m,n}\frac{|\langle(\Phi_N-I)^\star(\Phi_N-I)e_m,e_n \rangle|}
        {|m|^4+|n|^4}(1\wedge|m(x-y)|)(1\wedge |n(x-y)|)\\
      &\leq C\sum_{m,n}\frac{|\langle(\Phi_N-I)^\star(\Phi_N-I)e_m,e_n\rangle|}
        {(|m|+|n|)^{4-2\gamma}} |x-y|^{2\gamma},
  \end{aligned}
  \]
  where we used that
  \[
    \E\bigl[(\beta_m^{\Phi_N}(1)-\beta_k(1))
        (\overline{\beta_n^{\Phi_N}(1)-\beta_n(1)}\bigr] 
      =\langle (\Phi_N-I)^\star(\Phi_N-I) e_m,e_n \rangle.
  \]
  As in Proposition \ref{p:zreg}, Gaussianity and the definition of
  the norm in $W^{\alpha,p}$ yield
  \[
    \sup_{t\in[0,T]}\E \|z^{\Phi_N}(t)-z(t)\|^p_{W^{\alpha,p}}
      \leq C\Big(\sum_{m,n}\frac{|\langle(\Phi_N-I)^\star(\Phi_N-I)e_m,e_n\rangle|}
        {(|m|+|n|)^{4-2\gamma}}\Big)^{p/2}.
  \]
  The Lebesgue dominated convergence theorem for the double sum on the right
  hand side concludes the proof.
\end{proof}
\begin{proof}[Proof of Theorem \ref{thm:konsist}]
  Consider two different regularizing  operators 
 $\Phi$ and $\Psi$. Of course we have in mind $\Phi_N$ and $\Psi_N$, but we omit 
 the index $N$ in the following. First
  \[
    \B(z^\Phi ,z^\Phi)-\B(z^\Psi ,z^\Psi)=  \B(z^\Phi+z^\Psi ,z^\Phi-z^\Psi)\;.
  \]
  Define  for every $k\in\Z^2$,
  \[
    B_k^\pm = \beta_k^\Phi \pm \beta_k^\Psi = \langle W(t), \Phi e_k \pm \Psi e_k\rangle
    \quad \text{and}\quad
    z_k^\pm = \int_0^t e^{-(t-s)|k|^4}dB^\pm_k \;.
  \]
  Modify moreover the definition of $J_k$
  \[
  \hat{J}_k(t)
    = \sum_{m+n=k} m\cdot n\ z^+_m(t) z^-_n(t).
  \]
  Thus
  \[
    \B(z^\Phi ,z^\Phi) - \B(z^\Psi ,z^\Psi)
      =  \sum_{k\in\Z^2} |k|^2 \hat{J}_k e_k
  \]
  and
  \begin{equation}
    \label{e:so*}
       \E\|\B(z^\Phi ,z^\Phi) - \B(z^\Psi ,z^\Psi)\|^2_{H^{-2-\gamma}}
      = \sum_{k\in\Z^2} |k|^{-2\gamma} \E |\hat{J}_k|^2
  \end{equation}
  By exchanging expectation and summation,
  \[
   \E|\hat{J}_k|^2
     = \sum_{\substack{m_1+n_1=k\\ m_2+n_2=k}} (m_1\cdot n_1)(m_2\cdot n_2)
         \E z^+_{m_1} z^-_{n_1}\overline{z^+_{m_2}}
         \overline{z^-_{n_2}}.
  \]
  Wick's formula \cite[Proposition I.2]{Sim1974} yields 
  \[
    \E z^+_{m_1} z^-_{n_1} \overline{z^+_{m_2}} \overline{z^-_{n_2}}
      = \E z^+_{m_1} z^-_{n_1} \E \overline{z^+_{m_2}} \overline{z^-_{n_2}} 
        +\E z^+_{m_1} \overline{z^+_{m_2}}  \E z^-_{n_1}\overline{z^-_{n_2}} 
        +\E z^+_{m_1} \overline{z^-_{n_2}} \E\overline{z^+_{m_2}} z^-_{n_1}.
  \]
  Hence, (using the symmetry of variables $n_2 \leftrightarrow m_2$ in the
  last term)
  \begin{equation}\label{e:nondiJ}
    \begin{split}
      \E |\hat{J}_k|^2 
        &=  \Big| \sum_{m+n=k} (m\cdot n)\E z^+_{m} z^-_{n}\Big|^2 \\
        &\quad+  \sum_{\substack{m_1+n_1=k\\ m_2+n_2=k}} (m_1\cdot n_1)(m_2\cdot n_2) \E z^+_{m_1} \overline{z^+_{m_2}} \E z^-_{n_1}\overline{z^-_{n_2}} \\
        &\quad+  \sum_{\substack{m_1+n_1=k\\ m_2+n_2=k}} (m_1\cdot n_1)(m_2\cdot n_2) \E z^+_{m_1} \overline{z^-_{m_2}} \E\overline{z^+_{n_2}} z^-_{n_1}.
    \end{split}
  \end{equation}
  Since $\E\langle W(t),u\rangle\overline{\langle W(t),v\rangle}
  =\langle v,u \rangle$, 
  \[
    \E z^\pm_{m} \overline{z^\mp_{\ell}}
      = \int_0^te^{-(t-s)(|m|^4+|\ell|^4)}\,ds
        \langle\Phi e_\ell \mp \Psi e_\ell,\Phi e_m\pm\Psi e_m\rangle.
  \]
  Hence,
  \[
    |\E z^\pm_{m} \overline{z^\mp_{\ell}}|
      \leq  \frac1{|m|^4+|\ell|^4}
        \big|\langle \Phi e_\ell\mp\Psi e_\ell, \Phi e_m\pm\Psi e_m\rangle\big|,
  \]
  and similarly for other combinations of signs.

  Let us now consider again the sequences $\Phi_N$ and $\Psi_N$.
  We treat the diagonal terms with 
  $m_1=m_2$ and $n_1=n_2$ and the off--diagonal terms differently.
  We have to assume some uniform summability of the off-diagonal terms. 
  This is ensured by the bounds \eqref{e:Phibound1} and \eqref{e:Phibound2}.
  Moreover, all summands go to $0$ in \eqref{e:nondiJ} due to
  the convergence in \eqref{e:Phiconv}. 
  Thus from \eqref{e:so*} by the Lebesgue dominated convergence theorem:
  \[
    \sup_{t\ge0} \E\|\B(z^{\Phi_N} ,z^{\Phi_N})-\B(z^{\Psi_N} ,z^{\Psi_N})\|^2_{H^{-2-\gamma}}
      \to 0 \qquad\text{for }N\to 0.
  \]
  As before, using hyper--contractivity as in the proof 
  of Proposition \ref{p:nelson}, we can show 
  that this holds for all moments and not only for the second. 
\end{proof}
Let us come back to the examples given in Section \ref{s:mild1}.
\begin{example}[Example \ref{exp:or1} resumed]\label{exp:or1bis}
  The operators $\Phi_N$ are diagonal and self--adjoint,
  denote by $\phi_k^N$ the eigenvalues of $\Phi_N$.
  These numbers are (up to a constant) determined by the
  Fourier coefficients of $N^2q_N$. Write
  \[
    N^2q_N(z)
      = \sum_{m\in\Z^2} q_m^N e_m(z),
  \]
  then $\Phi_Ne_k = q_{-k}^N$, thus $\phi_k^N=q_{-k}^N$.
  It is easy to check now that \eqref{e:absconv} is a decay
  condition on the eigenvalues and a sufficient condition
  is given by $|\phi_k^N|\lesssim |k|^{-\eta}$ for some $\eta>0$,
  that is $q$ belongs to $H^\eta$. The off--diagonal assumptions
  are clearly verified. It remains to check the convergence
  \eqref{e:Phiconv} when the $\Phi_N$ are combined with the
  Galerkin truncation operators $\pi_N$. To this end, it is
  sufficient to show that $\phi_k^N\to1$ as $N\to\infty$.
  This can be checked using the fact that $q$ is supported
  around $0$,
  \[
   q_k^N
     = N^2\int_{[-\pi,\pi]^2} q_N(x)e_{-k}(x)\,dx \\ 
     = \int_{[-\pi,\pi]^2} q(z) e_{-k/N}(z)\,dz
     \longrightarrow 1.
  \]
\end{example}
\begin{example}[Example \ref{exp:or2} resumed]\label{exp:or2bis}
  For simplicity of notation extend the operator to complex valued functions by 
  \[
    \Phi_N f(x)
      = \int_{[0,2\pi]^2} {N^2} q_N(x,y)) \overline{f(y)}\,dy =\langle N^2q_N(x,\cdot) , f\rangle\;.
  \]
  Recall that $q_N$ is given by a non-negative smooth $q$ 
  supported in a small neighbourhood of the diagonal $x=y$,
  such that $q_N$ is a periodic extension of $q(Nx,Ny)$ on $\torus\times\torus$.
Denote by $q_{k,\ell}^{(N)}$ the Fourier coefficients of $N^2q_N$,
i.e.
\[
N^2q_N(x,y) = \sum_{k,\ell\in\Z^2} q_{k,\ell}^{(N)} e_k(x)e_\ell(y)\;.
\]
This immediately implies, that 
\[
\langle \Phi_Ne_m , e_n \rangle = \langle \sum_{k \in\Z^2} q_{k,m }^{(N)} e_k , e_n\rangle = q_{n,m }^{(N)}
\]
 and 
  \[
\langle \Phi_Ne_m , \Phi_N e_n \rangle = \langle \sum_{k \in\Z^2} q_{k,m }^{(N)} e_k , \sum_{k \in\Z^2} q_{k,n }^{(N)} e_k\rangle 
= \sum_{k \in\Z^2}   q_{k,m }^{(N)}   q_{k,n }^{(N)} \;.
\]
We can now again check all assumptions of Lemma \ref{l:admissible} 
and  Theorem \ref{thm:or2} and \ref{thm:konsist}.
First \eqref{e:absconv} is true, once $q$ is sufficiently smooth, for
example in some $H^\epsilon$, as this implies that
$q_{k,m }^{(N)} \leq \|N^2 q_N\|_{H^\epsilon} (|k|^2+|\ell|^2)^{\epsilon/2}$
by Lemma \ref{l:sumsum}.

For the next steps, as before, 
let $\Psi^{(N)}=\pi_N$ be the projection onto the first Fourier modes. 
Now,
\[ c_{m,n}  \leq \sup_{N\in\N}\{ |\sum_{k \in\Z^2}  q_{k,m }^{(N)}   q_{k,n }^{(N)}| + \delta_{m,n} + q_{n,m }^{(N)} \}.
\]  
The bounds in (\ref{e:Phibound1}) and  (\ref{e:Phibound2})  are easy to establish, 
as we can verify that $c_{m,n}$ is uniformly bounded
together with the fact that, by Lemma \ref{l:sumsum},  $\sum_{m+n=k}|m|^{-3}|n|^{-3} \leq C(1+|k|)^{-3}$.
In order to establish uniform bounds on $c_{m,n}$, consider 
\[ q_{n,m }^{(N)} = N^2 \int_{\torus}  \int_{\torus} q_N(x,y) e_{-n}(x)e_{-m}(y)\,dx\,dy,\]
and, as $\langle q_N(x,\cdot, e_m \rangle = \sum_{\ell\in\Z^2} q_{\ell,m}e_k(x) $,
\[\sum_{k \in\Z^2} q_{k,m}^{(N)} q_{k,n}^{(N)}
= N^4 \int_{\torus} \int_{\torus} q_N(x,y)q_N(x,z) e_{-m}(x)e_{-n}(y)\,dx\,dy.
\]
The bounds now follow immediately from substituting $N$ and bounds on the support of $q$.
With some more effort, one can also verify that
\[q_{n,m }^{(N)} =   N^{-2} \int_{N\torus}  \int_{N\torus} q(x,y) e_{-n/N}(x)e_{-m/N}(y) dx dy  \to \delta_{n,m}\qquad\text{for}\quad N\to \infty\;.
\]
We rely on a splitting on $N\torus$ into in $N$ translated copies of  $\torus$ here.
The convergence of $\sum_{k \in\Z^2}  q_{k,m }^{(N)}   q_{k,n }^{(N)}$ 
to a Kronecker-Delta is more involved, and we skip details here.
The crucial condition is (\ref{e:thmor2}), 
which does not seem to hold for arbitrary kernel $q$.
First let us remark, that main problem in (\ref{e:thmor2}) are the terms
with $m\not=n$,
as the diagonal terms are easily summable under our assumption. 

A weak sufficient condition for the off diagonal term  
would be to assume that for some $\xi>1$,  
\[
| q_{k,\ell}^{(N)} | \leq C (2+|k|^2+|\ell|^2)^{-\xi}.
\] 
Now we can verify by comparison with integrals, that 
\[
|\langle (\Phi_N-I)e_n, (\Phi_N-I) e_m \rangle |
\leq  \sum_{k \in\Z^2}  |q_{k,m }^{(N)}   q_{k,n }^{(N)}| +  |q_{m,n }^{(N)}| + \delta_{m,n}
\leq C (1+|m|+|n|)^{-2\xi}\;.
\]
Thus (\ref{e:thmor2}) is true, as long as $\gamma<\alpha$.
\end{example}
\subsection{Proof of the stability theorem}\label{sec:stab}

In this section we prove Theorem \ref{thm:stability}.
Let $h_0\in\Hz^1(\torus)$, $\epsilon\in(0,\frac12)$
and let $(\Phi_N)_{N\in\N}$ be a sequence of regularizing
operators satisfying the convergence property \eqref{e:convstab}.
In the following the index $N$ is omitted, and we use a general regularizing
operator $\Phi$ first.

\subsubsection*{Step 1: Existence}

Fix $\epsilon \in(0,\frac12)$ from the proof of existence of solution
in Theorem \ref{t:mild}
and let $v=h-z$ be the solution from Theorem \ref{t:mild}.
We first establish:
\begin{theorem}
Assume that the regularized operator $\Phi$ is such that $z^\Phi\in  C^0([0,1],\Hz^{1+\epsilon})$. 
Then the equation with regularized noise $z^\Phi$ has a unique local solution $v^\Phi$ in $\X(\epsilon,\rho,T)$
for some small random $T>0$.
\end{theorem}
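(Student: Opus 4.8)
The plan is to re-run the fixed-point argument from the proof of Theorem~\ref{t:mild} verbatim, but with the regularized data $(z^\Phi, \B(z^\Phi,z^\Phi))$ in place of $(z, \Bt(z,z))$. Concretely, I would define the operator
\[
  \T^\Phi v(t)
    \eqdef S(t)h_0
        - \int_0^t S(t-s)\B(v,v)\,ds
        - \int_0^t S(t-s)\bigl(\B(z^\Phi,z^\Phi) + 2\B(z^\Phi,v)\bigr)\,ds,
\]
and show that for suitable random $\rho, T>0$ it maps $\X(\epsilon,\rho,T)$ into itself and is a contraction there. The key observation is that nothing in the estimates leading to \eqref{e:self} and \eqref{e:contract} used any special structure of $z$ or $\Bt(z,z)$ beyond two facts: that $z$ lies in $L^q(0,T;\Wz^{\alpha,q})$ (used via Corollary~\ref{c:mixed} for the mixed term $\B(z,v)$), and that $\Bt(z,z)$ lies in $L^{q'}(0,T;H^{-\gamma'}_\wp)$ (used via the Nelson-type bound for the quadratic term). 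For the regularized problem both quantities are in fact \emph{better} behaved: since $z^\Phi\in C^0([0,1];\Hz^{1+\epsilon})$ by hypothesis, its $\Wz^{\alpha,q}$-norm is finite and continuous on $[0,1]$, and $\B(z^\Phi,z^\Phi)$ is a genuine function whose $H^{-\gamma'}$-norm is likewise controlled. Thus the quantities $Z_1^\Phi(T)$ and $Z_2^\Phi(T)$ replacing $Z_1(T), Z_2(T)$ are finite and still vanish as $T\to0$.

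First I would record the three termwise estimates. The purely deterministic quadratic term $\int_0^t S(t-s)\B(v,v)\,ds$ is identical to before and is bounded by $c_0\rho^2$ via Lemma~\ref{l:Bsquare}. The mixed term $\int_0^t S(t-s)\B(z^\Phi,v)\,ds$ is handled exactly as in Theorem~\ref{t:mild} by Corollary~\ref{c:mixed}, giving a bound $c_0\rho\, T^{e_1}Z_1^\Phi(T)$ with $Z_1^\Phi(T) = \|z^\Phi\|_{L^q(0,T;\Wz^{\alpha,q})}$. The quadratic-in-noise term $\int_0^t S(t-s)\B(z^\Phi,z^\Phi)\,ds$ is bounded, using the same splitting of the fractional powers of $A$ and H\"older's inequality, by $c_0\,T^{e_2}Z_2^\Phi(T)$ with $Z_2^\Phi(T) = \|\B(z^\Phi,z^\Phi)\|_{L^{q'}(0,T;H^{-\gamma'}_\wp)}$. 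Summing yields the analogue of \eqref{e:self}, and the contraction estimate \eqref{e:contract} follows identically with $Z_1^\Phi$ in place of $Z_1$.

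The self-mapping and contraction then close by the same bookkeeping as in Theorem~\ref{t:mild}: pick $a\in(0,1)$, $b\in(0,a^2/4c_0)$, choose $\rho$ solving $c_0\rho^2 - a\rho + b\leq0$, and take $T$ smaller than the stopping times $\inf\{t: c_0 t^{e_1}Z_1^\Phi(t)>1-a\}$ and $\inf\{t: R_{h_0}(t) + c_0 t^{e_2}Z_2^\Phi(t)>b\}$. Both are almost surely positive because $R_{h_0}(t)\to0$ deterministically and $Z_1^\Phi, Z_2^\Phi$ are continuous and vanish at $t=0$ by the assumed regularity of $z^\Phi$. The contraction mapping theorem on the complete metric space $\X(\epsilon,\rho,T)$ then delivers the unique fixed point $v^\Phi$.

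I do not expect any genuine obstacle here; the content is that the regularized data satisfy \emph{stronger} hypotheses than the ones actually used in Theorem~\ref{t:mild}, so the argument transfers with no new analysis. The only point requiring a word of care is ensuring that $T$ can be chosen strictly positive almost surely, i.e. that $Z_1^\Phi(T)$ and $Z_2^\Phi(T)$ tend to $0$ as $T\to0$ rather than merely being finite; this follows from dominated convergence once we know $z^\Phi\in C^0([0,1];\Hz^{1+\epsilon})$ (hence $t\mapsto\|z^\Phi(t)\|_{\Wz^{\alpha,q}}$ and $t\mapsto\|\B(z^\Phi(t),z^\Phi(t))\|_{H^{-\gamma'}}$ are integrable near $0$). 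The uniqueness of $v^\Phi$ in $\X(\epsilon,\rho,T)$ is automatic from the contraction property.
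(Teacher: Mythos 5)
Your overall strategy---rerunning the fixed--point argument of Theorem \ref{t:mild} with $(z^\Phi,\B(z^\Phi,z^\Phi))$ in place of $(z,\Bt(z,z))$---is exactly what the paper intends: its own ``proof'' is a one--line reference to \cite{BloRom2012} and to the argument of Theorem \ref{t:mild}, so in approach you agree with the authors. However, one justification in your write--up is wrong as stated. You claim that $z^\Phi\in C^0([0,1];\Hz^{1+\epsilon})$ makes the $\Wz^{\alpha,q}$--norm of $z^\Phi$ finite and continuous, so that $Z_1^\Phi(T)$ is controlled and Corollary \ref{c:mixed} applies as in the original proof. The Sobolev embedding this requires, $H^{1+\epsilon}_\wp\subset W^{\alpha,q}_\wp$ on the two--dimensional torus, needs $\alpha-\tfrac2q\leq\epsilon$; but any pair $(\alpha,q)$ admissible for Corollary \ref{c:mixed} satisfies $1-\alpha+\tfrac2q<\epsilon\wedge\gamma$, hence $\alpha-\tfrac2q>1-(\epsilon\wedge\gamma)\geq1-\epsilon>\epsilon$, since $\epsilon<\tfrac12$. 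So the embedding fails for \emph{every} admissible parameter choice, and the hypothesis of the theorem does not, by itself, place $z^\Phi$ in $L^q(0,T;\Wz^{\alpha,q})$ via this route.

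The gap is real but easily repaired, in either of two ways. (i) Drop Corollary \ref{c:mixed} altogether: since $z^\Phi(s)\in\Hz^{1+\epsilon}$, Lemma \ref{l:Bsquare} with exponents $\epsilon,\epsilon,\gamma$ and $\gamma\in[1-2\epsilon,1-\epsilon)$ gives $\|\B(v,z^\Phi)\|_{H^{-2-\gamma}}\leq c\|v\|_{H^{1+\epsilon}}\|z^\Phi\|_{H^{1+\epsilon}}$ and $\|\B(z^\Phi,z^\Phi)\|_{H^{-2-\gamma}}\leq c\|z^\Phi\|_{H^{1+\epsilon}}^2$, i.e.\ these terms are handled exactly like $\B(v,v)$, with $\sup_{[0,1]}\|z^\Phi\|_{H^{1+\epsilon}}$ (finite by the continuity hypothesis) replacing $Z_1$ and $Z_2$; the convolution estimates then yield positive powers of $T$, and your self--mapping/contraction bookkeeping closes unchanged. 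This is presumably why the hypothesis is phrased as $z^\Phi\in C^0([0,1];\Hz^{1+\epsilon})$: it is precisely what reduces the regularized problem to the standard setting of \cite{BloRom2012}. (ii) Alternatively, keep your estimates but justify $z^\Phi\in L^q(0,T;\Wz^{\alpha,q})$ correctly: since $\Phi$ is bounded, $\|\Phi e_k\|_{L^2}\leq\|\Phi\|$, and the Gaussian computation of Proposition \ref{p:zreg} applies verbatim to $z^\Phi$, giving $\sup_{t>0}\E[\|z^\Phi(t)\|^p_{W^{s,p}}]<\infty$ for all $s\in(0,1)$ and $p\geq1$, which covers $\alpha=1-(\epsilon\wedge\gamma)/2<1$. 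With either repair your proof is complete and coincides in substance with the argument the paper invokes.
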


This is follows  from \cite{BloRom2012}
or analogous to the result presented in this paper in Theorems \ref{t:mild}, \ref{t:mild2}, or \ref{t:mild3}.
 
It is straightforward to verify that $v^\Phi - S(t)h_0$ 
and $v-S(t)h_0$ are both continuous with values in $\Hz^{1+\epsilon}$
locally close to $0$. Moreover, we can continue all $v^\Phi$
by standard arguments as an $\Hz^{1+\epsilon}$-valued continuous function, 
until they blow up in $H^{1+\epsilon}$.

Recall for a given large radius $R>0$
\[\tau^R=\inf\{t>0:\ \|v(t)-S(t)h_0\|_{H^{1+\epsilon}} > R  \} 
\]
Moreover, we can define $\tau^\Phi>0$ 
as the maximal time of existence in $H^{1+\epsilon}$,
at which $v^\Phi$ blows up.

\subsubsection*{Step 2: Bounding the error} 

We can define for all $t\in[0,\tau^\Phi\wedge\tau^R)$ the error 
\[
d^\Phi(t) = v^\Phi(t) -v(t)\;.
\]
Define the stopping time, where the error exceeds $1$:
\[
 \tau^\star=\inf\{t>0:\ \|d^\Phi\|_{H^{1+\epsilon}}>1  \} \wedge \tau^R  \wedge 1 
\]
Obviously, $\tau^\Phi \geq \tau^\star>0$.
Using It\^o-formula, we have
\[
\begin{split}
d^\Phi(t)= \int_0^tS(t-s)[& \B(v^\Phi,v^\Phi)-\B(v ,v )
+ \B(v^\Phi,z^\Phi)-\B(v ,z ) 
+\B(z^\Phi,z^\Phi)-\tilde\B(z,z)]ds\\
= \int_0^t S(t-s)[& \B(d^\Phi,d^\Phi)+2\B(v , d^\Phi ) 
+  \B(d^\Phi,z) \\& +  \B(d^\Phi,z^\Phi-z) +  \B(v,z^\Phi-z) 
+\B(z^\Phi,z^\Phi)-\tilde\B(z,z)]ds\\
\end{split}
\]
Here, we rewrote all terms in a way that they depend only on $v$, $z$, $d^\Phi$,
$z-z^\Phi$, and $v-v^\Phi$.
Thus, using the bounds from the proof of Theorem \ref{t:mild}
with
\[\gamma=\epsilon,\quad   
\alpha=\frac12-\epsilon, \quad
q>\frac4\epsilon, \quad 
\beta \in(2,3-\epsilon)  \quad\text{and}\quad 
q' > 4/(3-\beta-\epsilon) 
\]
yields
\[
\begin{split}
\|d^\Phi(t)\|_{H^{1+\epsilon}}
\leq &C\int_0^t (t-s)^{-\frac14(4-\epsilon)}  (\|d^\Phi\|_{H^{1+\epsilon}}^2+\|v\|_{H^{1+\epsilon}} \|d^\Phi\|_{H^{1+\epsilon}} ) ds \\
&+C\int_0^t (t-s)^{-\frac14(3+2\epsilon)} \|d^\Phi\|_{H^{1+\epsilon}} (\|z\|_{W^{\alpha,q}}+\|z^\Phi-z \|_{W^{\alpha,q}})ds\\
&+ C\int_0^t (t-s)^{-\frac14(3+2\epsilon)} \|v\|_{H^{1+\epsilon}}\|z^\Phi-z \|_{W^{\alpha,q}}ds \\
&  + C \|\B(z^\Phi,z^\Phi)-\tilde\B(z,z)\|_{L^{q'}([0,1],H^{-\beta})}\;.
\end{split}
\]
The definition of $\tau^\star$ and using $t\in[0,1]$ yields 
\[
\begin{split}
\|d^\Phi(t)\|_{H^{1+\epsilon}}
\leq &C\int_0^t (t-s)^{-\frac14(4-\epsilon)}  \|d^\Phi\|_{H^{1+\epsilon}}(1+\|v\|_{H^{1+\epsilon}} + \|z\|_{W^{\alpha,q}} + \|z^\Phi-z \|_{W^{\alpha,q}} ) ds \\
&+ C\int_0^t (t-s)^{-\frac14(3+2\epsilon)} \|v\|_{H^{1+\epsilon}}\|z^\Phi-z \|_{W^{\alpha,q}}ds \\
&  + C \|\B(z^\Phi,z^\Phi)-\tilde\B(z,z)\|_{L^{q'}([0,1],H^{-\beta})}\;.
\end{split}
\]
Now define the random variable of the error
\[ 
\mathcal{E}^\Phi =  \|z^\Phi-z \|_{L^p([0,1],W^{\alpha,q})}
+\|\B(z^\Phi,z^\Phi)-\tilde\B(z,z)\|_{L^{q'}([0,1],H^{-\beta})}  \;.
\]
This simplifies the previous estimate for $t\in[0,\tau^\star)$ to

\[
\|d^\Phi(t)\|_{H^{1+\epsilon}}
\leq C\int_0^t (t-s)^{-\frac14(4-\epsilon)}  
\|d^\Phi\|_{H^{1+\epsilon}} (1+R  
+ \|z\|_{W^{\alpha,p}} + C \mathcal{E}^\Phi ) ds + C \mathcal{E}^\Phi\;.
\]

\subsubsection*{Step 3:  Estimates  in probability} 

Now define 
for $\delta\in(0,1)$ the set 
\[
\Omega_{\Phi,\delta} =\{  \mathcal{E}_\Phi \leq \delta  \}
\]
which is a large set, in case if $z^\phi$ approximates $z$ well, 
as $\Prob(\Omega_{\Phi,\delta})\geq 1 - \E\mathcal{E}_\Phi /\delta $.

Thus on the set $\Omega_{\Phi,\delta} $ for $t\leq  \tau^\star$ we obtain
\[
\|d^\Phi(t)\|_{H^{1+\epsilon}}
\leq C\int_0^t (t-s)^{-\frac14(4-\epsilon)}  \|d^\Phi\|_{H^{1+\epsilon}} \cdot ds (1 + \sup_{[0,1]}\|z\|_{W^{\alpha,p}}) + C \delta 
\]
Using Gronwall's inequality 
for $\frac1\delta \|d\|_{H^{1+\epsilon}}$  in the version of \cite{DH81}
yields  the existence of a finite random 
constant $C(\omega)$ independent of $\delta$ such that
\[
\sup_{t\in[0,\tau^\star]}\|d^\Phi(t)\|_{H^{1+\epsilon}} \leq C(\omega) \delta 
\]
Hence, we verified that for all $\delta_0 \in(0,1)$ on the set
$\{ C(\omega) < \delta_0/\delta  \}\cap \Omega_{\Phi,\delta}$ 
we have $\tau^\star \geq 1\wedge\tau_R$ and $ \|d^\Phi\|_{H^{1+\epsilon}} \leq \delta_0$.
Thus 
\[
\begin{split}
\Prob( \sup_{[0,1\wedge\tau_R]} \|d^\Phi\|_{H^{1+\epsilon}} > \delta_0)  
& \leq  \Prob( \{ C(\omega) \geq \delta_0/\delta\}\cup \Omega_{\Phi,\delta}^c) \\
& \leq  \Prob( \{ C(\omega) \geq  \delta_0/\delta\}) + \E\mathcal{E}^\Phi /\delta 
\end{split}
\]
Fixing $\delta= \sqrt{\E\mathcal{E}^\Phi}$  yields
\[\Prob( \sup_{[0,1\wedge\tau_R]} \|d^\Phi\|_{H^{1+\epsilon}} \geq \delta_0) 
\leq   \Prob( \{ C(\omega) \geq  \delta_0 / \sqrt{\E\mathcal{E}^\Phi}\}) 
+ \sqrt{\E\mathcal{E}^\Phi}
\]
\subsubsection*{Step 4:  Convergence} 

Now we can apply the results of the previous step to our sequence $\Phi_N$. Here $\E\mathcal{E}^{\Phi_N} \to 0$ for $N\to \infty$.
Thus $\sup_{[0,1\wedge\tau_R]} \|d^\Phi\|_{H^{1+\epsilon}} \to 0$ 
in probability.

\section{Rougher noise}

In this section we deal with a bounded linear operator
$\Phi$ on $\Lz^2(\torus)$ and we assume that
\begin{itemize}
  \item $\Phi\e_k = \phi_k e_k$ for every $k\in\Z^2_\star$,
  \item there is $\beta>0$ such that for every $k\in\Z^2_\star$,
    \[
      |\phi_k|^2
        \leq c|k|^\beta.
    \]
\end{itemize}
Since $\Phi$ is real valued, we clearly have that
$\bar\phi_k = \phi_{-k}$.
\begin{remark}\label{r:optimal}
  It is obvious that any additional information on the
  $\phi_k$ would in principle improve the results of
  this section. On the other hand our results are
  optimal once we know that $|\phi_k|^2\approx |k|^{\beta}$,
  namely there are $c,c'>0$ such that 
  $c'|k|^\beta\leq|\phi_k|^2\leq c|k|^\beta$.
\end{remark}

We wish to find a mild solution of the following equation,
\begin{equation}\label{e:hphi}
  dh + \Delta^2 h + \B(h,h) = \Phi\,dW,
\end{equation}
where $W$ is a cylindrical Wiener process on $L^2(\torus)$,
again as $h = v + z$, where
\begin{equation}\label{e:zphi}
  z(t)
    = \int_0^tS(t-s)\Phi\,dW
    = \sum_{k\in\Z^2_\star}\phi_k z_k e_k,
\end{equation}
and the $z_k$ are defined as in \eqref{e:zkappa}.
The following lemma can be easily proved as in
Proposition \ref{p:zreg}.
\begin{lemma}
  Let $\beta<2$ and let $z$ be the process defined in \eqref{e:zphi}.  
  For every $p\geq1$ and $s\in\bigl(0,1-\tfrac\beta2\bigr)$, 
  \[
    \sup_{t>0} \E\bigl[\|z(t)\|^p_{W^{s,p}}\bigr]
      <\infty.
  \]
\end{lemma}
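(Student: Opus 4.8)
The plan is to mimic the proof of Proposition \ref{p:zreg} essentially verbatim, tracking how the extra factor $|\phi_k|^2 \leq c|k|^\beta$ shifts the regularity threshold. First I would use the Fourier representation \eqref{e:zphi}, namely $z = \sum_{k\in\Z^2_\star}\phi_k z_k e_k$ with $z_k$ as in \eqref{e:zkappa}, to write
\[
  \E[|z(t,x)-z(t,y)|^2]
    \leq c\sum_{k\in\Z^2_\star}|\phi_k|^2\,\E[|z_k(t)|^2]\,|e_k(x)-e_k(y)|^2.
\]
Since $\E[|z_k(t)|^2] = \tfrac{1}{2|k|^4}(1-\e^{-2|k|^4 t})\leq \tfrac{c}{|k|^4}$ and $|e_k(x)-e_k(y)|^2 \leq c\,(1\wedge|k\cdot(x-y)|^2)$, the bound $|\phi_k|^2\leq c|k|^\beta$ turns the sum into
\[
  \E[|z(t,x)-z(t,y)|^2]
    \leq c\sum_{k\in\Z^2_\star}\frac{1\wedge|k\cdot(x-y)|^2}{|k|^{4-\beta}}.
\]

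The key step is the estimate of this last sum, which I would carry out exactly as in Proposition \ref{p:zreg} by splitting into the regimes $|k|\geq|x-y|^{-1}$ and $|k|\leq|x-y|^{-1}$. In the first regime one uses the trivial bound $1$ on the numerator and integrates $|k|^{-(4-\beta)}$; in the second one uses $|k\cdot(x-y)|^2\leq|k|^2|x-y|^2$ and integrates $|k|^{-(2-\beta)}$. Both pieces are controlled (by comparison with the corresponding integral over $\R^2$) for $\beta<2$, and together they yield a bound of order $|x-y|^{2-\beta}$, up to a logarithmic correction at the borderline. More precisely, for $s\in(0,1-\tfrac\beta2)$ this gives
\[
  \E[|z(t,x)-z(t,y)|^2]
    \leq c\,|x-y|^{2-\beta},
\]
uniformly in $t>0$, which is the analogue of the $|x-y|^2\log(\cdots)$ bound in the white-noise case $\beta=0$.

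From here the argument is routine and identical in structure to Proposition \ref{p:zreg}. By Gaussianity of $z(t,x)-z(t,y)$, every even moment is controlled by the second moment, so $\E[|z(t,x)-z(t,y)|^p]\leq c_p\,|x-y|^{(2-\beta)p/2}$ for all $p\geq1$. Plugging this into the Gagliardo–Slobodeckij seminorm characterization of $W^{s,p}$ gives
\[
  \E\Bigl[\iint \frac{|z(t,x)-z(t,y)|^p}{|x-y|^{2+sp}}\,dx\,dy\Bigr]
    \leq c\iint\frac{dx\,dy}{|x-y|^{2+sp-(2-\beta)p/2}},
\]
and the double integral converges precisely when $sp-(2-\beta)p/2<0$, i.e. $s<1-\tfrac\beta2$, uniformly in $t$. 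The main (and only real) obstacle is verifying that the split-sum estimate produces the correct exponent $2-\beta$; once that constant is pinned down, the passage to arbitrary moments and the final Sobolev integral are immediate, so I would state them briefly and refer back to Proposition \ref{p:zreg} for the details.
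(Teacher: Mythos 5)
Your proposal is correct and takes essentially the same approach as the paper: the paper offers no separate argument for this lemma, stating only that it ``can be easily proved as in Proposition \ref{p:zreg}'', and your write-up is exactly that adaptation, with the factor $|\phi_k|^2\leq c|k|^\beta$ correctly shifting the split-sum estimate to the modulus of continuity $|x-y|^{2-\beta}$ (log-corrected only at the borderline $\beta=0$) and hence to the threshold $s<1-\tfrac\beta2$ via Gaussian hypercontractivity and the Slobodeckij seminorm, uniformly in $t$ since $\E[|z_k(t)|^2]\leq \tfrac{1}{2}|k|^{-4}$.
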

We turn to the definition of $\B(z,z)$. The counterpart
of Lemma \ref{l:wick} and Proposition \ref{p:nelson} is
the following proposition.
\begin{proposition}
  Let $\beta<1$ and let $z$ be the stochastic convolution
  defined in \eqref{e:zphi}. Then $(\B_N(z,z))_{N\geq1}$
  is a Cauchy sequence in $L^2(\Omega;\Hz^{-2-\gamma})$ for every
  $\gamma>\beta$.
  
  Denote by $\Bt(z,z)$ the limit of this sequence,
  which is well--defined as an element of $\Hz^{-2-\gamma}$,
  for $\gamma>\beta$. Given $\gamma>\beta$ and $p\geq1$,
  there is a constant $c>0$ such that 
  \[
    \sup_{t>0}\E\bigl[\|\Bt(z(t),z(t))\|_{H^{-2-\gamma}}^p\bigr]
      \leq c.
  \]
\end{proposition}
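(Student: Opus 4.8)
The plan is to follow, step by step, the arguments for Lemma \ref{l:wick} and Proposition \ref{p:nelson}, carrying along the extra factor $|\phi_k|^2\le c|k|^\beta$ that now appears in each Fourier mode of $z$. Since $z=\sum_k \phi_k z_k e_k$, the quantity $J_k^N$ associated with $\pi_N z$ is still a quadratic expression in the independent centred Gaussians $z_m$, and the Gaussian pairing rules used in Lemma \ref{l:wick} are unchanged. The only modification is that $\E[|z_m|^2|z_n|^2]$ gets replaced throughout by $|\phi_m|^2|\phi_n|^2\,\E[|z_m|^2|z_n|^2]$.

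First I would establish the uniform second-moment bound. Proceeding exactly as in Proposition \ref{p:nelson},
\[
  \E[|J_k^N|^2]
    = \sum_{\substack{m+n=k\\ |m|,|n|\le N}} (m\cdot n)^2 |\phi_m|^2|\phi_n|^2\,\E[|z_m|^2|z_n|^2]
    \le c \sum_{m+n=k} \frac{(m\cdot n)^2|m|^\beta|n|^\beta}{|m|^4|n|^4}
    \le c \sum_{m+n=k} \frac1{|m|^{2-\beta}|n|^{2-\beta}}.
\]
For $0<\beta<1$ both exponents $2-\beta$ lie strictly in $(1,2)$ and their sum $4-2\beta$ exceeds $2$, so Lemma \ref{l:sumsum} yields the clean bound $\sum_{m+n=k}|m|^{-(2-\beta)}|n|^{-(2-\beta)}\le c|k|^{-(2-2\beta)}$, with no logarithmic loss, in contrast to the borderline case $\beta=0$ of Proposition \ref{p:nelson}. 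Summing with the weight $|k|^{-2\gamma}$ gives
\[
  \E[\|\B_N(z,z)\|_{H^{-2-\gamma}}^2]
    \le c\sum_{k\in\Z^2_\star} \frac1{|k|^{2\gamma+2-2\beta}},
\]
which is finite precisely when $\gamma>\beta$; this is exactly the threshold in the statement.

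For the Cauchy property I would repeat the $N\leftrightarrow N'$ computation of Lemma \ref{l:wick}, so that $\E[|J_k^N-J_k^{N'}|^2]$ is bounded by the same convolution sum, restricted to indices with $|m|\vee|n|>N$. On this restricted range I extract a gain $N^{-\theta}$ by lowering one exponent: after symmetrising in $m,n$ and using $|m|\vee|n|\ge N$, one writes $|m|^{-(2-\beta)}\le N^{-\theta}|m|^{-(2-\beta-\theta)}$, and Lemma \ref{l:sumsum} then applies with exponents $2-\beta-\theta$ and $2-\beta$, whose sum still exceeds $2$ as long as $\theta<2(1-\beta)$. This produces
\[
  \E[\|\B_N(z,z)-\B_{N'}(z,z)\|_{H^{-2-\gamma}}^2]
    \le \frac{c}{N^\theta}\sum_{k} \frac1{|k|^{2\gamma+2-2\beta-\theta}},
\]
and for any fixed $\gamma>\beta$ one chooses $0<\theta<2(\gamma-\beta)$ so that the $k$-sum converges and the right-hand side tends to $0$ as $N,N'\to\infty$. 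This gives the Cauchy property in $L^2(\Omega;\Hz^{-2-\gamma})$, hence defines $\Bt(z,z)$ as its limit.

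Finally, the higher-moment bound is obtained verbatim from Proposition \ref{p:nelson}: each $J_k^N$ lives in the second Wiener chaos, so Gaussian hypercontractivity (Theorem I.22 of \cite{Sim1974}) gives $\E[|J_k^N|^{2p}]\le(2p-1)^{2p}(\E[|J_k^N|^2])^p$, and Minkowski's inequality for the $\ell^1$ sum over $k$ transfers the uniform second-moment bound above to all even moments, uniformly in $N$ and $t$; passing to the limit in $N$ yields the stated bound for $\Bt(z,z)$. The only genuinely quantitative point is the exponent bookkeeping — that $\beta<1$ is exactly what makes the convolution sum summable, and $\gamma>\beta$ is exactly what makes the weighted $k$-sum converge — and this is the step I would check most carefully.
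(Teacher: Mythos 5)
Your proposal is correct and takes exactly the route the paper intends: the paper's own ``proof'' is merely the remark that the proposition ``follows the same lines of the above mentioned results (direct computations and hyper--contractivity)'', i.e.\ the arguments of Lemma \ref{l:wick} and Proposition \ref{p:nelson} with the extra factor $|\phi_m|^2|\phi_n|^2\leq c|m|^\beta|n|^\beta$. Your bookkeeping supplies the details consistently with those proofs: the bound $\sum_{m+n=k}|m|^{-(2-\beta)}|n|^{-(2-\beta)}\leq c|k|^{-(2-2\beta)}$ from Lemma \ref{l:sumsum} (valid precisely for $\beta<1$, with no logarithm since $2-\beta\neq2$ for $\beta>0$), the convergence of the weighted $k$--sum exactly for $\gamma>\beta$, the $N^{-\theta}$ gain with $0<\theta<2(\gamma-\beta)\wedge2(1-\beta)$ for the Cauchy property, and second--chaos hypercontractivity for the higher moments are all as in the white--noise case.
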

The proof of the above proposition follows the same lines of the
above mentioned results (direct computations and hyper--contractivity).
\begin{remark}\label{r:threshold}
  The restriction $\beta<1$ in the assumptions in the
  above proposition is necessary, at least in our simple
  approach. Denote by $J_k$ the term
  $J_k = \sum_{m+n=k} (m\cdot n)\phi_m\phi_n z_m z_n$,
  and assume, as in Remark \ref{r:optimal}, that
  $|\phi_k|^2\approx|k|^\beta$. Then it is easy to see that
  \[
    \E[|J_k|^2]
      \approx \sum_{m+n=k}\frac1{|m|^{2-\beta}|n|^{2-\beta}},
  \]
  which converges only if $\beta<1$.
  
  In order to manage the problem for $\beta\geq1$,
  it is necessary to consider another renormalization
  of the non--linearity. While the first renormalization
  has been hidden by the Laplace term in front of the
  squared gradient (killing the infinite constant,
  see Lemma 3.1), it looks like one should need a
  more refined approach as in \cite{Hai2013,GubImkPer2012}
  to proceed further, but this does not fit in our
  simple approach.
\end{remark}
We have all ingredients to prove the existence of a local
mild solution of \ref{e:hphi}, interpreted as $h = v + z$.
Here $v$ is the solution of the mild formulation \eqref{e:vmild},
and $z$ is given by \eqref{e:zphi}.
\begin{proof}[Proof of Theorem \ref{t:mild2}]
  Let us look first at the ``self--mapping property''. We need to estimate
  the three terms $\mathcal{I}_1 = \int_0^tS(t-s)\B(v,v)\,ds$,
  $\mathcal{I}_2 = \int_0^tS(t-s)\B(v,z)\,ds$
  and $\mathcal{I}_3 = \int_0^tS(t-s)\Bt(z,z)\,ds$.

  The estimate of $\mathcal{I}_1$ is the same of Theorem
  \ref{t:mild}. For $\mathcal{I}_2$ we use Corollary
  \ref{c:mixed}. Choose $\gamma$ such that
  $\tfrac\beta2<\gamma<1-\epsilon$, $\alpha$ such that
  $\tfrac\beta2<1-\alpha<\epsilon\wedge\gamma$,
  and $q$ large enough, then
  \[
  \|\mathcal{I}_2\|_{H^{1+\epsilon}}
    \leq c\int_0^t \|A^{\frac{3+\epsilon+\gamma}4}S(t-s)
      A^{-\frac{\gamma+2}4}\B(v,z)\|_{L^2}\,ds
    \leq ct^{-\frac\epsilon4}T^{e_1}
      \Bigl(\int_0^T\|z\|_{W^{\alpha,q}}^q\Bigr)^{\frac1q}.
  \]
  Finally, the estimate of $\mathcal{I}_3$ is the same of Theorem
  \ref{t:mild} and for $\beta<\gamma'<1-\epsilon$ and $q$ large
  enough,
  \[
    \|\mathcal{I}_3\|_{H^{1+\epsilon}}
      \leq c\int_0^t\|A^{\frac{3+\epsilon+\gamma'}4}S(t-s)
        A^{-\frac{\gamma'+2}4}\Bt(z,z)\|_{L^2}\,ds
      \leq ct^{-\frac\epsilon4}T^{e_2}
        \|\Bt(z,z)\|_{L^{q'}(H^{-2-\gamma'})}.
  \]
  The contraction property follows by the same inequalities,
  as in Theorem \ref{t:mild}.
\end{proof}
\begin{remark}
  The ``troublemaker'' in the proof of the above theorem
  is the last term, the one denoted by $\mathcal{I}_3$.
  Indeed, the computations for $\mathcal{I}_1$ and
  $\mathcal{I}_2$ work for any $\beta\in(0,1)$,
  given $\epsilon\in(0,\tfrac12)$ for the first
  term, and $\epsilon\in(\tfrac\beta2,1-\tfrac\beta2)$
  for the second term. The term $\mathcal{I}_3$ requires
  $\epsilon\in(0,1-\beta)$, hence $\beta<\tfrac23$.
\end{remark}
\subsection{The second order expansion in Wiener chaos}

Assume now $\beta\in[\tfrac23,1)$ and consider
the decomposition $h = u + \zeta + z$ of $h$, where
where $\zeta$ solves
\[
  \dot\zeta + A\zeta + \Bt(z,z) = 0,
    \qquad
  \zeta(0) = 0,
\]
and $u$ is the mild solution of
\begin{equation}\label{e:higher}
  \dot u + Au + \B(u,u) + 2\B(u,z) + 2\B(u,\zeta)
     + 2\Bt(\zeta,z) + \B(\zeta,\zeta) = 0,
\end{equation}
with initial condition $u(0) = h(0)$, and
$\Bt(\zeta,z)$ is defined below in Lemma \ref{l:addreg2}.
We can write $\zeta$ as
\begin{equation}\label{e:zeta}
  \zeta(t)
    = - \int_0^t S(t-s)\Bt(z,z)\,ds,
\end{equation}
then by maximal regularity, $\zeta\in\Hz^{1+a}$, for
every $a<1-\beta$. Notice that there is no additional
gain in regularity if we try
a direct computation in the style of Lemma \ref{l:wick}.
Roughly speaking, we have already ``used'' the effect of
cancellations in the definition of $\Bt(z,z)$.

Although we have gained additional regularity
for the term $\B(\zeta,\zeta)$ appearing in the equation
for the remainder $u$, this is not enough. Indeed
a standard multiplication theorem in Sobolev spaces
(Lemma \ref{l:Bsquare}) yields that
$\B(\zeta,\zeta)\in H_\wp^{-2-\gamma}$
for $\gamma>2\beta-1$, which by a quick computation
allows, in the fixed point argument, for
$\epsilon\in(\tfrac\beta2,2-2\beta)$,
thus $\beta<\tfrac45$. Even worse, when dealing
with $\B(\zeta,z)$, we see that we still have not
enough regularity to give a meaning to this term.
We shall solve both problems exploiting cancellations.

Before proceeding, we state a few preliminary remarks.
First, we know that $z = \sum_k \phi_k z_k e_k$ and that
\[
  \zeta(t)
    = \int_0^t S(t-s)\Bt(z,z)\,ds
    = \lim_N \int_0^t S(t-s)B(z^N,z^N)\,ds
    = \lim_N \zeta^N(t),
\]
with obvious definition of $\zeta_N$. If we write
$\phi_k^N = \phi_k$ if $|k|\leq N$ and $\phi_k^N=0$
otherwise, we have
$z^N(t) = \sum_k \phi_k^N z_k(t) e_k$, and
$\zeta^N(t) = \sum_{k\in\Z^2_\star}|k|^2\J_k^N(t)e_k$
where we have set for every $k\in\Z^2_\star$,
\[
  J_k^N
    = \sum_{m+n=k}(m\cdot n)\phi_m^N\phi_n^Nz_mz_n
      \qquad\text{and}\qquad
  \mathcal{J}_k^N(t)
    = \int_0^t\e^{-|k|^4(t-s)}J_k^N(s)\,ds.
\]
Finally, we remark a simple computation that will be
useful in the next sections. Let $m,n\in\Z^2_\star$, then
$\E[z_m(t)z_n(s)]=0$ unless $m+n=0$. In the latter case,
\begin{equation}\label{e:simple}
  \E[z_m(t)z_n(s)]
    = \E[z_m(t)\bar z_m(s)]
    = \frac1{|m|^4}\bigl(\e^{-|m|^4|t-s|} - \e^{-|m|^4(t+s)}\bigr)
    \leq \frac1{|m|^4}.
\end{equation}
\subsubsection{The term $\B(\zeta,\zeta)$}

We shall prove the following result.
\begin{lemma}\label{l:addreg1}
  Let $\beta\in(0,1)$. Then for every $\gamma>\beta-1$
  and $p\geq1$,
  \[
    \sup_{t\geq0}\E[\|\B(\zeta,\zeta)\|_{H^{-2-\gamma}}^p]
      <\infty.
  \]
\end{lemma}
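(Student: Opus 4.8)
The plan is to reduce the claim to a second--moment estimate and then extract all moments by hypercontractivity, exactly as in the passage from Lemma~\ref{l:wick} to Proposition~\ref{p:nelson}. Since $\zeta$ is an element of the second Wiener chaos, $\B(\zeta,\zeta)$ is a polynomial of degree four in the underlying Gaussian family, hence lies in a fixed finite sum of Wiener chaoses; Nelson's hypercontractivity (Theorem~I.22 of \cite{Sim1974}) then controls the higher moments by a power of the second moment, uniformly in $t$. Thus it suffices to prove $\sup_{t\geq0}\E[\|\B(\zeta,\zeta)\|_{H^{-2-\gamma}}^2]<\infty$ for $\gamma>\beta-1$. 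I would carry this out on the spectral truncations $\zeta^N$, establishing a bound uniform in $N$ together with a Cauchy estimate (the difference being handled by the same computation restricted to the shells $N<|\cdot|\leq N'$, as in Lemma~\ref{l:wick}), so that the bound survives in the limit defining $\zeta$.

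Next I would write everything out in Fourier modes. Using $\zeta=\sum_k|k|^2\J_k e_k$, one has $\B(\zeta,\zeta)=\sum_k|k|^2 K_k e_k$ with $K_k=\sum_{p+q=k}(p\cdot q)|p|^2|q|^2\J_p\J_q$, so that $\|\B(\zeta,\zeta)\|_{H^{-2-\gamma}}^2=\sum_k|k|^{-2\gamma}|K_k|^2$. Expanding each $\J_p=\int_0^t\e^{-|p|^4(t-\sigma)}\sum_{a+b=p}(a\cdot b)\phi_a\phi_b z_a(\sigma)z_b(\sigma)\,d\sigma$, the quantity $\E|K_k|^2$ becomes a sum, over index configurations $a_1+b_1+a_2+b_2=k$ and their primed counterparts, of the expectation of a product of eight centred Gaussians at four time points, integrated in time. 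Wick's formula reduces this to a sum over pairings, and \eqref{e:simple} shows that a pair survives only when its two indices are opposite, contributing a correlation factor bounded by $|\cdot|^{-4}$ uniformly in $t$.

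The key structural fact---and the source of the improvement from the threshold $\gamma>2\beta-1$ of the multiplication theorem (Lemma~\ref{l:Bsquare}) down to $\gamma>\beta-1$---is that any pairing internal to a single factor $\J_p$ forces $p=0$ and is therefore excluded on $\Z^2_\star$; equivalently, such contributions carry the vanishing mean $\E[\J_p]$. Only the ``connected'' pairings, which link the two factors of $K_k$ to the two factors of $\overline{K_k}$, survive, and these constrain the free summation indices and so gain the extra decay. I would enumerate the finitely many connected pairing patterns, and for each collect the weights: the dot products bounded by $|a||b|$, the covariance factors $|\phi_a|^2\leq c|a|^\beta$, the time--correlation factors $|\cdot|^{-4}$ from \eqref{e:simple}, and the spectral weights $|k|^2,|p|^2,|q|^2$ produced by the Laplacians and gradients.

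The heart of the argument---and the step I expect to be the main obstacle---is the estimation of the resulting nested lattice sums, reducing each to the form controlled by Lemma~\ref{l:sumsum} (repeated applications of $\sum_{m+n=k}|m|^{-a}|n|^{-b}\lesssim|k|^{-c}$). The bookkeeping must be done carefully enough that the powers of $\beta$ accumulate so as to give convergence of $\sum_k|k|^{-2\gamma}\E|K_k|^2$ exactly under $\gamma>\beta-1$; this is where the time--smoothing of $\zeta$ (the factor $\int_0^t\e^{-|p|^4(t-\sigma)}\,d\sigma$) must be exploited together with the pairwise cancellations, rather than discarded. Once the bound uniform in $N$ and in $t$ is established, the Cauchy property identifies $\B(\zeta,\zeta)$ as the limit, and hypercontractivity upgrades the estimate to every $p\geq1$, completing the proof.
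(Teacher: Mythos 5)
Your plan coincides with the paper's own proof: a uniform second--moment bound on the spectral truncations $\B(\zeta^N,\zeta^N)$ via Fourier/Wick expansion, the exclusion of exactly those pairings that force a zero frequency (the paper's vanishing classes, using \eqref{e:simple}), the decay $|\J_h|\lesssim|h|^{-4}\sup_s|J_h(s)|$ harvested from the semigroup factor, and hypercontractivity as in Proposition~\ref{p:nelson} to upgrade to all moments $p\geq1$. The lattice--sum bookkeeping you defer as the main obstacle is precisely what the paper carries out, class by class, with Cauchy--Schwarz and Lemma~\ref{l:sumsum}, yielding the bound $|k|^{-(4-2\beta)}\log(1+|k|)$ for the dominant pairing class and hence convergence of $\sum_k|k|^{-2\gamma}\E|K_k|^2$ exactly for $\gamma>\beta-1$.
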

Since $\B(\zeta,\zeta) = \lim_N\B(\zeta^N,\zeta^N)$
in $H^{-2-\gamma}_\wp$ for every $\gamma>2\beta-1$,
to prove additional regularity of $\B(\zeta,\zeta)$ it is
enough to prove that $(\B(\zeta^N,\zeta^N))_{N\geq1}$
is uniformly bounded in $L^2(\Omega;H^{-2-\gamma}_\wp)$ (and hence in
$L^p(\Omega)$ for every $p\geq1$ by hyper--contractivity, see
Proposition \ref{p:nelson}) for every $\gamma>\beta-1$.

We have
\[
  \B(\zeta^N,\zeta^N)
    = \sum_{k\in\Z^2_\star}|k|^2\Bigl(
      \sum_{h+\ell=k}|\ell|^2|h|^2(h\cdot\ell)\J_h^N\J_\ell^N\Bigr)e_k,
\]
hence we can estimate its norm, to obtain
\[
\begin{aligned}
  \E[\|\B(\zeta^N,\zeta^N)\|_{H^{-2-\gamma}}^2]
    &= \sum_{k\in\Z^2_\star}|k|^{-2\gamma}\E\Bigl|
       \sum_{h+\ell=k}|\ell|^2|h|^2(h\cdot\ell)\J_h^N\J_\ell^N\Bigr|^2\\
    &\leq \sum_{k\in\Z^2_\star}|k|^{-2\gamma}
      \sum_{\substack{h_1+\ell_1=k\\h_2+\ell_2=k}}
      \frac{\sup\bigl|\E[J_{h_1}^NJ_{\ell_1}^N
      \bar J_{h_2}^N\bar J_{\ell_2}^N]\bigr|}
      {|h_1|\,|h_2|\,|\ell_1|\,|\ell_2|}.
\end{aligned}
\]
Each $\E[J_{h_1}^NJ_{\ell_1}^N\bar J_{h_2}^N\bar J_{\ell_2}^N]$
contains sums of terms like
\begin{equation}\label{e:eight}
  \E[z_{m_1}(r_1)z_{n_1}(r_1)z_{a_1}(s_1)z_{b_1}(s_1)\bar z_{m_2}(r_2)
    \bar z_{n_2}(r_2)\bar z_{a_2}(s_2)\bar z_{b_2}(s_2)],
\end{equation}
with $m_i+n_i=h_i$, $a_i+b_i=\ell_i$, $i=1,2$. Wick's formula yields
$105$ products of four expectations. Of these terms, $45$ are zero
by \eqref{e:simple}, since they contain terms like $\E[z_{m}z_{n}]$
with $m+n\neq0$. By symmetry, we can
collect the remaining terms in four classes as suggested in the picture%
\footnote{A dashed line means that the sum of the two connected labels is one
of the numbers $h_i,\ell_i$, the continuous line means that the two labels
are in the product of the same expectation, for instance the first
picture reads $\E[z_{m_1}z_{a_1}]\E[z_{n_1}z_{b_1}]
\E[\bar z_{m_2}\bar z_{a_2}]\E[\bar z_{n_2}\bar z_{b_2}]$.}
\begin{figure}[h]
  \begin{tikzpicture}[x=4mm,y=4mm]
    \def\basics#1{
      \begin{scope}[shift={(#1,0)}]
        \draw[dashed] (1,0)--(0,1) (0,3)--(1,4) (3,4)--(4,3) (3,0)--(4,1);
        \node[anchor=north] (m1) at (1,0) {\Tiny $m_1$};
        \node[anchor=east] (n1) at (0,1) {\Tiny $n_1$};
        \node[anchor=east] (b1) at (0,3) {\Tiny $b_1$};
        \node[anchor=south] (a1) at (1,4) {\Tiny $a_1$};
        \node[anchor=north] (m2) at (3,0) {\Tiny $m_2$};
        \node[anchor=west] (n2) at (4,1) {\Tiny $n_2$};
        \node[anchor=south] (a2) at (3,4) {\Tiny $a_2$};
        \node[anchor=west] (b2) at (4,3) {\Tiny $b_2$};
      \end{scope}
    }
    \basics{0}
    \draw (1,0)--(1,4) (0,1)--(0,3) (3,0)--(3,4) (4,1)--(4,3); 
    \basics{8}
    \draw[shift={(8,0)}] (0,3)--(4,1) (1,0)--(3,0) (0,1)--(4,3) (1,4)--(3,4); 
    \basics{16}
    \draw[shift={(16,0)}] (3,4)--(1,4) (0,3)--(4,3) (0,1)--(4,1) (1,0)--(3,0); 
    \basics{24}
    \draw[shift={(24,0)}] (0,3)--(0,1) (1,0)--(3,0) (4,1)--(4,3) (1,4)--(3,4); 
  \end{tikzpicture}
  \caption{Graphical representation of classes of terms \eqref{e:eight}}
\end{figure}
The first term gives no contribution, since it is non-zero only if
$m_1+a_1=0$ and $b_1+n_1=0$, hence $k = h_1+\ell_1=
m_1+n_1+a_1+b_1=0$. The second term contains terms like
\[
  \bigl|\E[z_{m_1}\bar z_{m_2}]\E[z_{n_1}\bar z_{b_2}]\E[z_{a_1}\bar z_{a_2}]
      \E[z_{b_1}\bar z_{n_2}]\bigr|
    \leq\frac{\delta_{m_1-m_2}\delta_{n_1-b_2}\delta_{a_1-a_2}
      \delta_{b_1-n_2}}{|m_1|^4|n_1|^4|a_1|^4|b_1|^4},
\]
hence $n_1=b_2=h_1-m_1$, $b_1=n_2=h_2-m_1$, $a_1=a_2=\ell_1-h_2+m_1$
and for these indices,
\[
\begin{multlined}
  \smashoperator{\sum_{\substack{h_1+\ell_1=k\\h_2+\ell_2=k}}}
      \frac{\sup\bigl|\E[J_{h_1}^NJ_{\ell_1}^N
      \bar J_{h_2}^N\bar J_{\ell_2}^N]\bigr|}
      {|h_1|\,|h_2|\,|\ell_1|\,|\ell_2|}
    \leq\smashoperator[l]{\sum_{\substack{h_1+\ell_1=k\\h_2+\ell_2=k}}}
      \frac{c}{|h_1|\,|h_2|\,|\ell_1|\,|\ell_2|}\cdot\\
      \qquad\qquad\cdot\Bigl(\sum_{m_1}\frac{c}
      {|m_1|^{2-\beta}|h_2-m_1|^{2-\beta}|h_1-m_1|^{2-\beta}
      |\ell_2-h_1+m_1|^{2-\beta}}\Bigr).
\end{multlined}
\]
Change the order of sums, summing first over $h_1$, then
over $h_2$ and finally over $m_1$, then by the Cauchy--Schwartz
inequality and Lemma \ref{l:sumsum},
\[
\begin{multlined}
  \sum_{h_1+\ell_1=k}
      \frac{c}{|h_1|\,|\ell_1|\,|h_1-m_1|^{2-\beta}
      |\ell_2-h_1+m_1|^{2-\beta}}\leq{}\\
    \leq \Bigl(\sum_{h_1}\frac1{|h_1|^2|\ell_1|^2}\Bigr)^{\frac12}
      \Bigl(\sum_{h_1}\frac1{|h_1-m_1|^{4-2\beta}
      |\ell_2-h_1+m_1|^{4-2\beta}}\Bigr)^{\frac12}
    \leq \frac{\sqrt{\log(1+|k|)}}{|k|\,|\ell_2|^{2-\beta}}.
\end{multlined}
\]
If we plug this result in the sum over $h_1$ and we repeat the
same estimate, we obtain that the sum over $h_1$ and $h_2$ is
bounded by $|k|^{-2}|k-m_1|^{-(2-\beta)}\log(1+|k|)$. 
By applying Lemma \ref{l:sumsum} to the sum over $m_1$,
we finally obtain that the term arising from the fifth class is
bounded by $|k|^{-(4-2\beta)}\log(1+|k|)$.
The third and fourth classes can be estimated by similar but simpler
considerations. In conclusion $\B(\zeta,\zeta)\in H^{-2-\gamma}$
for $\gamma>\beta-1$ and the lemma is proved.
\subsubsection{The term $\B(\zeta,z)$}

Given the information we have on the regularity of $z$ and
$\zeta$, we cannot use Corollary \ref{c:mixed} to give
a meaning to $\B(\zeta,z)$ (and it may not have a unique
meaning, as $\Bt(z,z)$). Hence we resort on the
same method we have used for $\Bt(z,z)$.
\begin{lemma}\label{l:addreg2}
  Let $\beta\in[\tfrac23,1)$ and let $z$, $\zeta$, $z^N$,
  $\zeta^N$ be defined as before. Then $(\B(\zeta^N,z^N))_{N\geq1}$
  is a Cauchy sequence in $L^2(\Omega;\Hz^{-2-\gamma})$ for every
  $\gamma>\tfrac\beta2$.
  
  Denote by $\Bt(\zeta,z)$ the limit of this sequence,
  which is well--defined as an element of $\Hz^{-2-\gamma}$,
  for $\gamma>\tfrac\beta2$. Given $\gamma>\tfrac\beta2$
  and $p\geq1$, there is a constant $c>0$ such that 
  \[
    \sup_{t>0}\E\bigl[\|\Bt(\zeta(t),z(t))\|_{H^{-2-\gamma}}^p\bigr]
      \leq c.
  \]
\end{lemma}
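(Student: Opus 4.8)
The plan is to imitate the second--moment computations of Lemma \ref{l:wick} and Lemma \ref{l:addreg1}, now for the cubic mixed object. Writing $\B(\zeta^N,z^N)=\sum_{k\in\Z^2_\star}|k|^2 K_k^N e_k$ with
\[
  K_k^N=\sum_{h+\ell=k}(h\cdot\ell)\,|h|^2\,\J_h^N\,\phi_\ell^N z_\ell,
\]
I would reduce the whole statement to a bound on $\E[|K_k^N|^2]$. This is legitimate because $\B(\zeta^N,z^N)$ lies in the sum of the first and third Wiener chaos (it is the product of the second--chaos process $\zeta^N$ with the first--chaos process $z^N$), so once the $L^2(\Omega)$ estimate is in hand all higher moments follow from hyper--contractivity exactly as in Proposition \ref{p:nelson}, with exponent $3$ in place of $2$. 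Since
\[
  \E\bigl[\|\B(\zeta^N,z^N)\|_{H^{-2-\gamma}}^2\bigr]
    =\sum_{k\in\Z^2_\star}|k|^{-2\gamma}\,\E[|K_k^N|^2],
\]
the proof comes down to controlling $\E[|K_k^N|^2]$ and, for the Cauchy statement, the corresponding quantity for $K_k^N-K_k^{N'}$.

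The key step is to expand $\E[|K_k^N|^2]$ by Wick's formula. After inserting the definition of $\J_h^N$ (an integral over a time variable $s_i$ of $\sum_{m_i+n_i=h_i}(m_i\cdot n_i)\phi^N_{m_i}\phi^N_{n_i}z_{m_i}(s_i)z_{n_i}(s_i)$), each summand becomes an expectation of six centred complex Gaussians, three unbarred ($z_{m_1},z_{n_1},z_{\ell_1}$) and three barred ($\bar z_{m_2},\bar z_{n_2},\bar z_{\ell_2}$). Wick's formula produces $15$ pairings; by \eqref{e:simple} a pairing survives only if every pair has indices summing to zero, which annihilates all pairings that would force $k=0$. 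This is precisely the cancellation already exploited in Lemma \ref{l:wick}, where the Laplace in front of the gradient removes the divergent constant. The surviving pairings split, as in Lemma \ref{l:addreg1}, into a \emph{diagonal} class ($m_1=m_2$, $n_1=n_2$, $\ell_1=\ell_2$, hence $h_1=h_2$) and a few genuinely off--diagonal classes.

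The diagonal class is the one that fixes the threshold. Using $\E[|z_\ell|^2]\lesssim|\ell|^{-4}$ and $|\phi_\ell|^2\lesssim|\ell|^{\beta}$, together with the bound $\E[|\zeta_h|^2]\lesssim|h|^{2\beta-6}$ coming from maximal regularity applied to \eqref{e:zeta}, and $(h\cdot\ell)^2\leq|h|^2|\ell|^2$, the diagonal contribution to $\E[|K_k^N|^2]$ is bounded, uniformly in $N$, by
\[
  c\sum_{h+\ell=k}\frac1{|h|^{4-2\beta}\,|\ell|^{2-\beta}}
    \leq\frac{c}{|k|^{2-\beta}},
\]
the last inequality being Lemma \ref{l:sumsum}: the exponent $4-2\beta>2$ makes the $h$--sum convergent and localises it near $\ell\approx k$. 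Hence the diagonal part of the norm is controlled by $\sum_k|k|^{-2\gamma}|k|^{\beta-2}$, which is finite exactly when $\gamma>\tfrac\beta2$, the range claimed in the lemma.

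For the off--diagonal classes one must show they are no worse than the diagonal one. I would first carry out the integrations in the time variables $s_i$ and $t$ using the exponential factors in $\J_h^N$ and the covariance bound \eqref{e:simple}, and then estimate the resulting spatial sums by the Cauchy--Schwarz plus Lemma \ref{l:sumsum} bookkeeping used for the fifth class in Lemma \ref{l:addreg1}, verifying in each case that the spatial sum stays below $|k|^{\beta-2}$. For the Cauchy property I would run the identical computation for $\B(\zeta^N,z^N)-\B(\zeta^{N'},z^{N'})$, where the truncations restrict the sums to $|\cdot|\vee|\cdot|>N$; extracting a factor $N^{-\delta}$ from the constraint $|m|\vee|n|>N$, as in the $N\leftrightarrow N'$ estimate of Lemma \ref{l:wick}, shows the difference tends to $0$, so the sequence converges in $L^2(\Omega;\Hz^{-2-\gamma})$ and, by hyper--contractivity, in every $L^p(\Omega)$. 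The main obstacle is exactly this bookkeeping of the Wick pairings: one must check that every surviving off--diagonal class, after the time integrations and repeated use of Lemma \ref{l:sumsum}, produces no spurious surviving powers of $|h|$ and never exceeds the diagonal bound; the combinatorics is heavier than in Lemma \ref{l:wick} but lighter than the $105$--term analysis of Lemma \ref{l:addreg1}.
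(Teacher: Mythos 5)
Your proposal reproduces the paper's own proof essentially step for step: the same Fourier coefficient $K_k^N$ (the paper's $G_k^N$), the same Wick expansion of the six-Gaussian expectation into $15$ pairings of which five vanish by \eqref{e:simple}, the same diagonal-class bound $\sum_{h+\ell=k}|h|^{2\beta-4}|\ell|^{\beta-2}\leq c|k|^{\beta-2}$ via Lemma \ref{l:sumsum} giving exactly the threshold $\gamma>\tfrac\beta2$, the same extraction of a factor $N^{-\eta}$ for the Cauchy property, and hyper-contractivity as in Proposition \ref{p:nelson} for the higher moments (the paper also defers the two off-diagonal classes to ``similar arguments''). Two cosmetic slips do not affect this: the annihilated pairings force $h_i=0$ or mismatched indices rather than $k=0$, and your bound $\E[|\zeta_h|^2]\lesssim|h|^{2\beta-6}$ follows from the explicit smoothing factor $\int_0^t\e^{-|h|^4(t-s)}\,ds\leq|h|^{-4}$ in $\J_h$ together with $\E[|J_h|^2]\lesssim|h|^{2\beta-2}$, rather than from abstract maximal regularity.
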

\begin{remark}\label{r:extend}
  When $\beta\in(0,\tfrac23)$, the term $\B(\zeta,z)$
  is well--defined and hence the above lemma ensures
  additional regularity for $\B(\zeta,z)$. The
  argument is the same of Lemma \ref{l:addreg1}.
\end{remark}

As in the previous part,
\[
\begin{multlined}[.9\linewidth]
  \B(\zeta^N,z^N)
    = \sum_{k,\ell\in\Z^2_\star}|k|^2\bar\phi_\ell^N\bar z_\ell
      \J_k^N\B(e_k,e_\ell) = {}\\
    = \sum_{k\in\Z^2_\star}|k|^2\Bigl(\sum_{h+\ell=k}
      |h|^2(h\cdot\ell) \phi_\ell^N z_\ell \J_h^N\Bigr)e_k
    = \sum_{k\in\Z^2_\star}|k|^2 G_k^N e_k,
\end{multlined}
\]
where $G_k^N$ is the inner sum, and compute the norm of
$\B(\zeta^N,z^N) - \B(\zeta^{N'},z^{N'})$ in
$H_\wp^{-2-\gamma}$ to obtain
\[
  \|\B(\zeta^N,z^N) - \B(\zeta^{N'},z^{N'})\|_{H^{-2-\gamma}}^2
    = \sum_{k\in\Z^2_\star}|k|^{-2\gamma}
      \bigl|G_k^N - G_k^{N'}\bigr|^2.
\]
Thus
\[
  \E[|G_k^N-G_k^{N'}|^2]
    \leq\sum_{\substack{h_1+\ell_1=k\\ h_2+\ell_2=k}}^{N\leftrightarrow N'}
       \frac{|\ell_1|\,|\ell_2|\,|\phi_{\ell_1}\bar\phi_{\ell_2}|}{|h_1||h_2|}
       \sup_{s_1,s_2}\bigl|\E\bigl[J_{h_1}(s_1)\bar J_{h_2}(s_2)
       z_{\ell_1}(t)\bar z_{\ell_2}(t)\bigr]\bigr|
\]
The term $\E[J_{h_1}\bar J_{h_2}z_{\ell_1}\bar z_{\ell_2}]$
contains sums of terms like $\E[z_{\ell_1}\bar z_{\ell_2}z_{m_1}
z_{n_1}\bar z_{m_2}\bar z_{n_2}]$ that, by
Wick's formula, are the sum of $15$ products of three pairwise
expectations. Five of these terms are $0$, by \eqref{e:simple},
since contain $\E[z_mz_n]$ with $m+n\neq0$. By symmetry
we can collect the terms in classes, whose representatives are
\[
\begin{gathered}
  \E[z_{\ell_1}\bar z_{\ell_2}]\E[z_{m_1}\bar z_{m_2}]\E[z_{n_1}\bar z_{n_2}]
  \qquad\E[z_{\ell_1}z_{m_1}]\E[\bar z_{\ell_2}\bar z_{m_2}]\E[z_{n_1}\bar z_{n_2}]\\
  \E[z_{\ell_1}\bar z_{m_2}]\E[\bar z_{\ell_2}z_{m_1}]\E[z_{n_1}\bar z_{n_2}].
\end{gathered}
\]
We focus on the first class. Fix $\eta>0$ small enough (depending
on $\beta$ and $\gamma$), then a simple computation,
the assumption on the $\phi_k$ and Lemma \ref{l:sumsum} yield
\[
\begin{multlined}[.9\linewidth]
  \sup\bigl|\E\bigl[J_{h_1}\bar J_{h_2}z_{\ell_1}\bar z_{\ell_2}\bigr]\bigr|
    \leq\frac{c\delta_{h_1-h_2}\delta_{\ell_1-\ell_2}}
      {|h_1|^{2}|\ell|^{2-\beta}}\Bigl(\sum_{m+n=h_1}
      \frac1{|m|^{2-\beta}|n|^{2-\beta}}\Bigr)\leq{}\\
    \leq \frac{c\delta_{h_1-h_2}\delta_{\ell_1-\ell_2}}
      {|h_1|^{4-2\beta}|\ell|^{2-\beta}}
    \leq \frac{c\delta_{h_1-h_2}\delta_{\ell_1-\ell_2}}
      {N^\eta|h_1|^{4-2\beta-\eta}|\ell|^{2-\beta-\eta}}.
\end{multlined}
\]
By summing in $h_1,h_2$, we obtain the estimate
$cN^{-\eta}|k|^{-(2-\beta-\eta)}$.
The other two terms can be estimated by similar
arguments giving the same bound. Hence the sequence
is Cauchy in $H_\wp^{-2-\gamma}$ for every
$\gamma>\tfrac\beta2$. The statement on
moments follows by hyper--contractivity as in
Proposition \ref{p:nelson}.
\subsubsection{The local mild solution}

The additional regularity of $\B(\zeta,\zeta)$ and the
interpretation of $\Bt(\zeta,z)$ we have proved, finally
allow to obtain a local solution of \eqref{e:hphi}
as $h = u+\zeta+z$, where $u$ is a mild solution
of \eqref{e:higher}.
\begin{proof}[Proof of Theorem \ref{t:mild3}]
  We give a quick sketch of the estimate for the ``self--mapping''
  property, the details are the same as Theorems \ref{t:mild}
  and \ref{t:mild2}. The terms arising from $\B(u,u)$ and
  $\B(u,z)$ in the mild formulation can be handled as
  $\B(v,v)$ and $\B(v,z)$, as well as $\B(u,\zeta)$,
  since $\zeta$ is smoother than $z$.
  For $\B(\zeta,\zeta)$ we use Lemma \ref{l:addreg1}
  and choose $q>4$ to get
  \[
    \Bigl\|\int_0^tS(t-s)\B(\zeta,\zeta)\,ds\Bigr\|_{H^{1+\epsilon}}
      \leq ct^{-\frac\epsilon4}T^{\frac14-\frac1q}
        \Bigl(\int_0^T\|\B(\zeta,\zeta)\|_{H^{-2}}\,ds\Bigr)^{\frac1q}.
  \]
  Likewise, by Lemma \ref{l:addreg2}, if $\epsilon\in(0,1-\tfrac\beta2)$
  and$\tfrac\beta2<\gamma<1-\epsilon$,
  \[
    \Bigl\|\int_0^tS(t-s)\Bt(\zeta,z)\,ds\Bigr\|_{H^{1+\epsilon}}
      \leq ct^{-\frac\epsilon4}T^{\frac14(1-\gamma)-\frac1q}
        \Bigl(\int_0^T\|\B(\zeta,z)\|_{H^{-2-\gamma}}\,ds\Bigr)^{\frac1q},
  \]
  for $q$ large enough.
\end{proof}
\appendix
\section{Bounds on the non--linear operator}

\begin{lemma}\label{l:Bsquare}
  If $\alpha,\beta,\gamma\geq0$ and $\alpha+\beta+\gamma\geq1$
  (with strict inequality if at least one of the numbers is equal to $1$),
  then $\B$ maps $H^{1+\alpha}_\wp\times H^{1+\beta}_\wp$
  continuously into $H^{-2-\gamma}_\wp$.  
  In particular, there exists $c = c(\alpha,\beta,\gamma)$ such that
  \[ 
    \|\B(u_1,u_2)\|_{H^{-2-\gamma}}
      \leq c\|u_1\|_{H^{1+\alpha}} \|u_2\|_{H^{1+\beta}}.
  \]
\end{lemma}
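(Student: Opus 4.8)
The plan is to pass to Fourier coefficients and reduce the claim to a single bilinear convolution estimate on $\ell^2(\Z^2_\star)$. Recalling the Fourier expression of the non--linearity,
$\B(u_1,u_2)=\sum_k|k|^2\bigl(\sum_{m+n=k}(m\cdot n)\,(u_1)_m(u_2)_n\bigr)e_k$, and that on zero--mean functions $(1+|k|^2)^{-2-\gamma}|k|^4\le c|k|^{-2\gamma}$, the square of $\|\B(u_1,u_2)\|_{H^{-2-\gamma}}$ is controlled by $\sum_k|k|^{-2\gamma}\bigl|\sum_{m+n=k}(m\cdot n)(u_1)_m(u_2)_n\bigr|^2$. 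Using $|m\cdot n|\le|m|\,|n|$ and setting $a_m=|m|^{1+\alpha}|(u_1)_m|$, $b_n=|n|^{1+\beta}|(u_2)_n|$, so that $\|a\|_{\ell^2}$ and $\|b\|_{\ell^2}$ are comparable to $\|u_1\|_{H^{1+\alpha}}$ and $\|u_2\|_{H^{1+\beta}}$, everything reduces to proving
\[
  T:=\sum_{k\in\Z^2_\star}|k|^{-2\gamma}\Bigl(\sum_{m+n=k}|m|^{-\alpha}|n|^{-\beta}a_m b_n\Bigr)^2
    \le c\,\|a\|_{\ell^2}^2\|b\|_{\ell^2}^2 .
\]
Equivalently, factoring out $\Delta$ (which maps $H^{-\gamma}_\wp$ boundedly into $H^{-2-\gamma}_\wp$) and $\nabla$, this is the Sobolev multiplication estimate $\|\nabla u_1\cdot\nabla u_2\|_{H^{-\gamma}}\le c\|\nabla u_1\|_{H^\alpha}\|\nabla u_2\|_{H^\beta}$ in dimension $d=2$, for which $\alpha+\beta+\gamma\ge d/2=1$ is exactly the scaling threshold.

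I would then establish $T\le c\|a\|^2\|b\|^2$ by a paraproduct--type splitting of the convolution $m+n=k$ into the three regions $|m|\le\tfrac12|k|$ (whence $|n|\sim|k|$), $|n|\le\tfrac12|k|$ (whence $|m|\sim|k|$), and $|m|,|n|\ge\tfrac12|k|$ (high--high). In the first region I apply Cauchy--Schwarz in $m$ together with Lemma \ref{l:sumsum}, namely $\sum_{|m|\le|k|}|m|^{-2\alpha}\le c|k|^{(2-2\alpha)_+}$ (with a logarithm when $\alpha=1$), while $|n|^{-\beta}\sim|k|^{-\beta}$; after summing in $k$ this leaves $\sum_k|k|^{2-2\alpha-2\beta-2\gamma}(a^2\ast b^2)_k$, and since $\alpha+\beta+\gamma\ge1$ the exponent is $\le0$ and $|k|\ge1$, so the sum collapses to $c\sum_k(a^2\ast b^2)_k=c\|a\|^2\|b\|^2$. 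The second region is symmetric. This is where the hypothesis enters transparently, and where the excluded endpoint values $\alpha=1$ or $\beta=1$ at $\alpha+\beta+\gamma=1$ would produce an unabsorbable logarithm.

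The high--high region is the main obstacle. When $\alpha+\beta>1$ it is still elementary: Cauchy--Schwarz together with the convergent tail $\sum_{|m|\ge|k|}|m|^{-2\alpha-2\beta}\le c|k|^{2-2\alpha-2\beta}$ (Lemma \ref{l:sumsum}) again produces the harmless factor $|k|^{2-2\alpha-2\beta-2\gamma}\le1$. The delicate case is the scale--critical regime $\alpha+\beta+\gamma=1$ with $\alpha+\beta\le1$: here simply moving all the decay onto $|k|$ (using $|m|,|n|\ge|k|$ to bound $|m|^{-\alpha}|n|^{-\beta}|k|^{-\gamma}\le|k|^{-(\alpha+\beta+\gamma)}=|k|^{-1}$) loses a logarithm, because $\sum_{\Z^2_\star}|k|^{-2}=\infty$. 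I would instead run a balanced estimate: by duality $T^{1/2}=\sup_{\|d\|_{\ell^2}=1}\sum_{m+n=k}\frac{a_m b_n d_k}{|m|^\alpha|n|^\beta|k|^\gamma}$, and since at the critical exponent each of $\alpha,\beta,\gamma$ is strictly below $d/2=1$, the sequences $|m|^{-\alpha}a_m$, $|n|^{-\beta}b_n$, $|k|^{-\gamma}d_k$ lie in $\ell^r$ for suitable $r<2$; Young's inequality on $\Z^2$ (the discrete Hardy--Littlewood--Sobolev estimate) then closes the bound. This strict subcriticality of each individual exponent is precisely what the clause ``strict inequality if one of the numbers equals $1$'' guarantees.

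Finally, I would record the quick alternative: instead of reproving the critical convolution estimate by hand, one may simply invoke the classical Sobolev multiplication theorem $H^\alpha_\wp\cdot H^\beta_\wp\hookrightarrow H^{-\gamma}_\wp$ for $d=2$ under $\alpha,\beta\ge0$, $-\gamma\le\min(\alpha,\beta)$ and $\alpha+\beta+\gamma\ge1$ (with the stated endpoint strictness), as in \cite[Chapter 3]{SchTri1987}, and conclude from the boundedness of $\Delta$ and $\nabla$ between the relevant spaces. The self--contained Fourier route above is preferable here, since it relies only on the elementary summation bounds of Lemma \ref{l:sumsum} that are already in hand.
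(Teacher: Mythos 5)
Your proof is correct, but it takes a genuinely different route from the paper's. The paper argues by duality on the physical--space side: for $\phi\in H^{2+\gamma}_\wp$ it integrates by parts to get $\scal{\phi,\B(u_1,u_2)}=\int\Delta\phi\,\nabla u_1\cdot\nabla u_2\,dx$, applies H\"older with $\tfrac1p+\tfrac1q+\tfrac1r=1$, and invokes the two--dimensional Sobolev embeddings $H^{1-2/p}_\wp\subset L^p$; the condition $\alpha+\beta+\gamma\geq1$ then appears as $(1-\tfrac2p)+(1-\tfrac2q)+(1-\tfrac2r)=1$, and the strictness clause is exactly the failure of the critical embedding $H^1_\wp\subset L^\infty$ --- essentially your ``quick alternative'', carried out in four lines. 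You instead work entirely on the Fourier side, reducing to the weighted convolution bound $T\leq c\|a\|_{\ell^2}^2\|b\|_{\ell^2}^2$ and treating it by a paraproduct splitting plus Lemma \ref{l:sumsum}; your low--high and high--low analysis, including locating the unabsorbable logarithm at $\alpha=1$ or $\beta=1$ with critical sum, is accurate, as is the high--high case when $\alpha+\beta>1$. One caveat in the high--high critical regime ($\alpha+\beta\leq1$, $\alpha+\beta+\gamma=1$): the first mechanism you state --- strong $\ell^r$ membership plus plain Young --- provably cannot close, because $|m|^{-\alpha}a_m\in\ell^{r_1}$ only for $\tfrac1{r_1}<\tfrac12+\tfrac\alpha2$ strictly (and likewise for the other two factors), so $\tfrac1{r_1}+\tfrac1{r_2}+\tfrac1{r_3}<\tfrac32+\tfrac12=2$ and the Young scaling identity $\sum_i\tfrac1{r_i}=2$ is never attained; what saves the endpoint is precisely the weak-type refinement you name parenthetically, namely $|m|^{-\alpha}\in\ell^{2/\alpha,\infty}$ (using $\alpha<1=\tfrac d2$, which is where the strictness hypothesis bites), whence $|m|^{-\alpha}a_m\in\ell^{r_1,2}$ with equality $\tfrac1{r_1}=\tfrac12+\tfrac\alpha2$, and O'Neil's Lorentz--space convolution inequality with second indices summing to $\tfrac32\geq1$ --- i.e.\ genuinely the Hardy--Littlewood--Sobolev estimate, not Young. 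This endpoint subtlety is the exact Fourier--side mirror of the paper's failing critical Sobolev embedding. As for what each approach buys: the paper's is far shorter and delegates the endpoint to textbook embeddings, while yours is self--contained, makes transparent where each hypothesis enters, and uses only the summation technology (Lemma \ref{l:sumsum}) already deployed in the stochastic estimates of Section \ref{s:z}.
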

\begin{proof}
  Let $\phi\in H^{2+\gamma}_\wp$, then by integration by parts and
  the H\"older inequality,
  \[
    \scal{\phi, \B(u_1,u_2)}
      = \int \Delta\phi\nabla u_1\cdot\nabla u_2\,dx
      \leq \|\Delta\phi\|_{L^p}\|\nabla u_1\|_{L^q}\|\nabla u_2\|_{L^r},
  \]
  with $\tfrac1p+\tfrac1q+\tfrac1r=1$. Sobolev's embeddings yield
  $H^{1-2/p}_\wp\subset L^p$, hence
  \[
    \scal{\phi, \B(u_1,u_2)}
      \leq c\|\phi\|_{H^{3-2/p}}\|u_1\|_{H^{2-2/q}}\|u_2\|_{H^{2-2/r}}.
  \]
  Choose now $p,q,r$ such that $\alpha\geq1-\tfrac2q$,
  $\beta\geq1-\tfrac2r$ and $\gamma\geq1-\tfrac2p$. The case
  where one number is $1$ corresponds to the critical Sobolev
  embedding and needs a strict inequality. 
\end{proof}
\begin{proposition}
  Let $s\in(0,1)$, $s_1,s_2\in(s,1)$ and $p,p_1,p_2\geq1$ such that
  $\tfrac1{p_1}+\frac1{p_2}=\frac1p$. Then there is a constant $c>0$
  such that
  \[
    \|u_1u_2\|_{W^{s,p}}
      \leq c\|u_1\|_{W^{s_1,p_1}}\|u_2\|_{W^{s_2,p_2}},
  \]
  for all $u_1\in W^{s_1,p_1}_\wp$ and $u_2\in W^{s_2,p_2}_\wp$.
\end{proposition}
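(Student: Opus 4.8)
The plan is to work with the Gagliardo (Slobodeckij) characterization of the fractional norm, which is available precisely because all three smoothness indices lie in $(0,1)$. On the torus one has $\|u\|_{W^{s,p}}^p \approx \|u\|_{L^p}^p + [u]_{s,p}^p$ with seminorm
\[
  [u]_{s,p}^p
    = \iint_{\torus\times\torus}
        \frac{|u(x)-u(y)|^p}{|x-y|^{2+sp}}\,dx\,dy,
\]
the distance being the one on $\torus$. The $L^p$ part of $\|u_1u_2\|_{W^{s,p}}$ is immediate: by H\"older with $\tfrac1{p_1}+\tfrac1{p_2}=\tfrac1p$ and the trivial embeddings $W^{s_i,p_i}\subset L^{p_i}$ one gets $\|u_1u_2\|_{L^p}\le\|u_1\|_{L^{p_1}}\|u_2\|_{L^{p_2}}\le c\|u_1\|_{W^{s_1,p_1}}\|u_2\|_{W^{s_2,p_2}}$. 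For the seminorm I would insert the algebraic splitting $u_1(x)u_2(x)-u_1(y)u_2(y)=(u_1(x)-u_1(y))u_2(x)+u_1(y)(u_2(x)-u_2(y))$ and, by Minkowski, reduce to bounding the two symmetric integrals $I_1$ (difference on $u_1$, value on $u_2$) and $I_2$ (roles swapped).

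The heart of the matter is the estimate of $I_1=\iint |u_1(x)-u_1(y)|^p|u_2(x)|^p|x-y|^{-(2+sp)}\,dx\,dy$. I would apply H\"older in the double integral with the conjugate exponents $q_1=p_1/p$ and $q_2=p_2/p$ (these are conjugate exactly because $\tfrac1{p_1}+\tfrac1{p_2}=\tfrac1p$), splitting the kernel as $|x-y|^{-\alpha_1}|x-y|^{-\alpha_2}$ with $\alpha_1+\alpha_2=2+sp$ and assigning the $u_1$-factor to the $q_1$-block and the $u_2$-factor to the $q_2$-block. Since $pq_1=p_1$, the first block becomes a genuine $W^{s_1,p_1}$ seminorm once I choose $\alpha_1 q_1=2+s_1p_1$, i.e. $\alpha_1=(2+s_1p_1)p/p_1$. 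The second block, since $u_2(x)$ is independent of $y$ and $pq_2=p_2$, reduces after integrating in $y$ to $\|u_2\|_{L^{p_2}}^{p}$ times $\int_\torus|x-y|^{-\alpha_2 q_2}\,dy$, which converges iff $\alpha_2 q_2<2$. The bookkeeping then gives $\alpha_2=\tfrac{2p}{p_2}+(s-s_1)p$, hence $\alpha_2 q_2=2+(s-s_1)p_2$, and this is $<2$ \emph{exactly} because $s<s_1$. Thus $I_1\le c\,[u_1]_{s_1,p_1}^p\|u_2\|_{L^{p_2}}^p$.

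By the obvious symmetry, interchanging the two factors and using $s<s_2$, the term $I_2$ is bounded by $c\,[u_2]_{s_2,p_2}^p\|u_1\|_{L^{p_1}}^p$; adding the two and the $L^p$ estimate yields the claim. I expect the only delicate point to be this exponent bookkeeping: one must verify that the power $\alpha_2 q_2$ of the singular kernel stays below the dimension so that the kernel is integrable near the diagonal, and the whole argument hinges on the strict gaps $s<s_1$ and $s<s_2$ providing exactly the margin needed for that integrability (the upper bounds $s_1,s_2<1$ being what licenses the Gagliardo characterization in the first place). Everything else is routine H\"older and Minkowski manipulation.
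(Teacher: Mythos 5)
Your proof is correct and follows essentially the same route as the paper's: the Gagliardo seminorm, the algebraic splitting of the product difference, and H\"older in the double integral with exponents $p_1/p$, $p_2/p$, splitting the kernel so that the difference-quotient factor reconstitutes the full $W^{s_i,p_i}$ seminorm while the strict gaps $s<s_1$, $s<s_2$ make the leftover kernel integrable near the diagonal. Your bookkeeping $\alpha_2 q_2 = 2+(s-s_1)p_2<2$ is exactly the paper's exponent $2-(s_2-s)p_1$ with the roles of the two factors interchanged.
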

\begin{proof}
  By definition
  \[
    \|u_1u_2\|_{W^{s,p}}^p
      = \int |u_1u_2|^p\,dx
        + \iint\frac{|u_1(x)u_2(x)-u_1(y)u_2(y)|^p}{|x-y|^{2+sp}}\,dx\,dy.
  \]
  By H\"older's inequality,
  \[
    \int |u_1u_2|^p\,dx
      \leq \|u_1\|_{L^{p_1}}^p\|u_2\|_{L^{p_2}}^p.
  \]
  The second term in the norm above is split in the two terms
  \[
    \iint|u_1(x)|^p\frac{|u_2(x)-u_2(y)|^p}{|x-y|^{2+sp}}\,dx\,dy
        + \iint|u_2(y)|^p\frac{|u_1(x)-u_1(y)|^p}{|x-y|^{2+sp}}\,dx\,dy
      = \memo{a} + \memo{b}.
  \]
  Using again the H\"older inequality,
  \[
    \begin{multlined}[.95\linewidth]
      \memo{a}
        = \iint \frac{|u_1(x)|^p}{|x-y|^{\tfrac{2p}{p_1}-(s_2-s)p}}
            \Bigl(\frac{|u_2(x)-u_2(y)|^{p_2}}{|x-y|^{2+s_2p_2}}\Bigr)^{\frac{p}{p_2}}
            \,dx\,dy\leq {}\\
        \leq \Bigl(\frac{|u_2(x)-u_2(y)|^p_2}{|x-y|^{2+s_2p_2}}\Bigr)^{\frac{p}{p_2}}
             \Bigl(\frac{|u_1(x)|^p_1}{|x-y|^{2-(s_2-s)p_1}}\Bigr)^{\frac{p}{p_1}}
        \leq c\|u_1\|_{L^{p_1}}\|u_2\|_{W^{s_2,p_2}}
    \end{multlined}
  \]
  since $|x-y|^{(s_2-s)p-2}$ is integrable. The term \memo{b} can be
  estimated similarly.
\end{proof}
\begin{corollary}\label{c:mixed}
  Let $\epsilon\in(0,1)$ and $\gamma>0$. For every
  $\alpha\in(0,1)$ and $q>2$ such that
  $1-\alpha+\tfrac2q<\epsilon\wedge\gamma$,
  there is a constant $c>0$ such that
  \[
    \|\B(u_1,u_2)\|_{H^{-2-\gamma}}
      \leq c\|u_1\|_{W^{\alpha,q}}\|u_2\|_{H^{1+\epsilon}},
  \]
  for every $u_1\in W^{\alpha,q}_\wp$ and $u_2\in H^{1+\epsilon}_\wp$.
\end{corollary}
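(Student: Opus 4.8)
The plan is to reduce the estimate to a product bound and then exploit duality, rather than imitating the direct Hölder argument of Lemma \ref{l:Bsquare}. Since the Laplacian is a bounded map $H^{-\gamma}\to H^{-2-\gamma}$, the first step is to write $\B(u_1,u_2)=\Delta(\nabla u_1\cdot\nabla u_2)$ and observe that it suffices to prove
\[
  \|\nabla u_1\cdot\nabla u_2\|_{H^{-\gamma}}
    \leq c\|u_1\|_{W^{\alpha,q}}\|u_2\|_{H^{1+\epsilon}}.
\]
The reason Lemma \ref{l:Bsquare} cannot be reused verbatim is that here $u_1$ is genuinely rough: $\nabla u_1$ lives only in $W^{\alpha-1,q}$ with negative smoothness $\alpha-1<0$, so it is not a function and one cannot simply put it in some $L^{q_0}$ and apply Hölder on the three gradients. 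Instead I would keep the derivative on $u_1$ hidden inside the duality pairing.

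For the duality step, fix a test function $\psi\in H^\gamma$ and rewrite the pairing, moving $\psi$ against the smooth factor,
\[
  \int(\nabla u_1\cdot\nabla u_2)\,\psi\,dx
    = \int \nabla u_1\cdot(\psi\,\nabla u_2)\,dx
    \leq \|\nabla u_1\|_{W^{\alpha-1,q}}\,\|\psi\,\nabla u_2\|_{W^{1-\alpha,q'}},
\]
where $q'$ is the conjugate exponent of $q$ and I have used the duality $(W^{\alpha-1,q}_\wp)'=W^{1-\alpha,q'}_\wp$ recorded in the Notations. The first factor is harmless, $\|\nabla u_1\|_{W^{\alpha-1,q}}\leq c\|u_1\|_{W^{\alpha,q}}$. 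The crux is the second factor: it is now a product of two distributions of \emph{positive} smoothness, namely $\psi\in H^\gamma$ and $\nabla u_2\in H^\epsilon$, so the multiplication proposition preceding this corollary becomes applicable.

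The main obstacle, and the only place where the hypothesis $1-\alpha+\tfrac2q<\epsilon\wedge\gamma$ is used, is the exponent bookkeeping needed to feed $\psi\,\nabla u_2$ into that product estimate with target space $W^{1-\alpha,q'}$. Setting $s=1-\alpha\in(0,1)$, I would first lower the smoothness and raise the integrability of each factor by Sobolev embedding: choose $\sigma_1\in(1-\alpha,\epsilon)$ and $\sigma_2\in(1-\alpha,\gamma\wedge 1)$ together with exponents $p_1,p_2$ satisfying $\tfrac1{p_1}+\tfrac1{p_2}=\tfrac1{q'}$ and the embedding conditions $\tfrac2{p_1}\geq\sigma_1-\epsilon+1$, $\tfrac2{p_2}\geq\sigma_2-\gamma+1$, so that $\nabla u_2\in H^\epsilon\hookrightarrow W^{\sigma_1,p_1}$ and $\psi\in H^\gamma\hookrightarrow W^{\sigma_2,p_2}$. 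Summing the two embedding inequalities against the constraint $\tfrac2{p_1}+\tfrac2{p_2}=2-\tfrac2q$ shows the whole scheme is feasible precisely when $\sigma_1+\sigma_2\leq\epsilon+\gamma-\tfrac2q$, i.e.\ when $2(1-\alpha)<\epsilon+\gamma-\tfrac2q$; and this, together with the nonemptiness of the smoothness windows (which needs $1-\alpha<\epsilon\wedge\gamma$), is exactly what $1-\alpha+\tfrac2q<\epsilon\wedge\gamma$ guarantees. The product proposition then yields $\|\psi\,\nabla u_2\|_{W^{1-\alpha,q'}}\leq c\|\nabla u_2\|_{W^{\sigma_1,p_1}}\|\psi\|_{W^{\sigma_2,p_2}}\leq c\|u_2\|_{H^{1+\epsilon}}\|\psi\|_{H^\gamma}$. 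Taking the supremum over $\|\psi\|_{H^\gamma}\leq 1$ gives the $H^{-\gamma}$ bound on the product, and applying $\Delta$ completes the proof.
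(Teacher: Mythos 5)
Your proof is correct and follows essentially the same route as the paper's: duality against a test function in $H^\gamma$, moving the derivative off the rough factor $u_1$ (the paper writes this as $\scal{\phi,\nabla u_1\cdot\nabla u_2}=-\scal{u_1,\Div(\phi\nabla u_2)}$, which is your pairing in disguise), and then the multiplication proposition plus Sobolev embeddings applied to $\psi\,\nabla u_2$, with $1-\alpha+\tfrac2q<\epsilon\wedge\gamma$ entering only in the exponent bookkeeping. The single, immaterial difference is that the paper splits into the cases $\gamma\leq\epsilon$ and $\epsilon\leq\gamma$, keeping one factor at $p_i=2$ (e.g.\ $p_1=2$, $s_1=\gamma$) and embedding only the other, whereas you distribute the integrability symmetrically between the two factors.
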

\begin{proof}
  Assume $\gamma\leq\epsilon$. By duality,
  \[
    \|\B(u_1,u_2)\|_{H^{-2-\gamma}}
      \leq c\|\nabla u_1\nabla u_2\|_{H^{-\gamma}}
      = c\sup_{\|\phi\|_{H^\gamma}}\scal{\phi,\nabla u_1\nabla u_2}.
  \]
  Let $p$ be the conjugate exponent of $q$, then by integration by parts,
  duality and the previous proposition (with $p_1=2$, $s_1=\gamma$
  and $1-\alpha<s_2\leq\epsilon-\tfrac2q$),
  \[
    \begin{multlined}[.9\linewidth]
      \scal{\phi,\nabla u_1\nabla u_2}
        = -\scal{u_1, \Div(\phi\nabla u_2)}
        \leq c\|u_1\|_{W^{\alpha,q}}\|\phi\nabla u_2\|_{W^{1-\alpha,p}}\leq{}\\
        \leq c\|u_1\|_{W^{\alpha,q}}\|\phi\|_{H^{\gamma}}\|\nabla u_2\|_{W^{s_2,p_2}}
        \leq c\|u_1\|_{W^{\alpha,q}}\|u_2\|_{H^{1+\epsilon}},
    \end{multlined}
  \]
  since $H^\epsilon_\wp\subset W^{s_2,p_2}_\wp$ by the choice
  of $s_2$ and $p_2$.
  
  If on the other hand $\epsilon\leq\gamma$, apply the previous
  proposition with $p_2=2$, $s_2=\epsilon$, and
  $1-\alpha<s_2\leq\gamma-\tfrac2q$, and use the Sobolev embedding
  $H^\gamma_\wp\subset W^{s_1,p_1}_\wp$.
\end{proof}
The following Lemma is stated in \cite[Lemma 2.3]{BloRomTri2007}. 
\begin{lemma}\label{l:sumsum}
  For all $\alpha$, $\gamma >0$ with $\alpha+\gamma>d$ there exists
  $C = C(\alpha, \gamma) < \infty$ so that, for all $k \in \Z^d$,
  with $k\neq\mathbf{0}$,
  \[
    \sum_{\substack{m+n=k\\m\neq0,n\neq0}}\frac1{|m|^\alpha|n|^\gamma}\leq
      \begin{cases}
        C(1+|k|)^{-\beta},
          &\qquad\text{if $\alpha\neq d$ and $\gamma\neq d$},\\
        C(1+|k|)^{-\beta}\log(1+|k|),
          &\qquad\text{if $\alpha=d$ or $\gamma=d$},
      \end{cases}
  \]
  where $\beta=\min\{\alpha, \gamma, \alpha+\gamma-d\}$.
\end{lemma}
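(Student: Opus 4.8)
The plan is to prove the estimate by comparing $m$ and $n$ to the ``outer scale'' $|k|$, exploiting that $m+n=k$ forces $|m|+|n|\geq|k|$, so that at least one of the two factors has size at least $|k|/2$. First I would reduce to the regime $|k|\geq2$: for $|k|$ in any bounded range the left--hand side is a fixed convergent lattice sum (convergence being guaranteed precisely by $\alpha+\gamma>d$) and is therefore uniformly bounded, while $(1+|k|)^{-\beta}$ stays bounded below, so the inequality holds on that range after enlarging the constant. The factor $1+|k|$ is exactly what absorbs the small--$|k|$ regime, and the substance of the lemma lies in the asymptotics as $|k|\to\infty$.

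Next I would decompose the sum into three regions according to the sizes of $|m|$ and $|n|$: region (A), where $0<|m|\leq|k|/2$, which forces $|n|=|k-m|\geq|k|/2$; region (B), where $0<|n|\leq|k|/2$, which forces $|m|\geq|k|/2$; and region (C), where $|m|>|k|/2$ and $|n|>|k|/2$. In region (A) I bound $|n|^{-\gamma}\leq C|k|^{-\gamma}$ and am left with $\sum_{0<|m|\leq|k|/2}|m|^{-\alpha}$; comparing this with the integral $\int_1^{|k|}r^{d-1-\alpha}\,dr$ gives $C|k|^{d-\alpha}$ when $\alpha<d$, $C\log|k|$ when $\alpha=d$, and a uniform constant when $\alpha>d$. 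Thus region (A) contributes $C|k|^{-(\alpha+\gamma-d)}$, $C|k|^{-\gamma}\log|k|$, or $C|k|^{-\gamma}$ respectively, and region (B) contributes the symmetric bounds with the roles of $\alpha$ and $\gamma$ exchanged. For region (C) I would split once more: when $|n|\leq2|k|$ there are at most $C|k|^d$ lattice points and each summand is at most $C|k|^{-(\alpha+\gamma)}$, giving $C|k|^{-(\alpha+\gamma-d)}$; when $|n|>2|k|$ one has $|m|\geq|n|/2$, so each summand is $\leq C|n|^{-(\alpha+\gamma)}$ and $\sum_{|n|>2|k|}|n|^{-(\alpha+\gamma)}\leq C|k|^{d-(\alpha+\gamma)}$ by integral comparison, once again $C|k|^{-(\alpha+\gamma-d)}$.

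Finally I would collect the three contributions and verify that in every case the exponent of $|k|^{-1}$ is at least $\beta=\min\{\alpha,\gamma,\alpha+\gamma-d\}$. Region (C) always produces the exponent $\alpha+\gamma-d\geq\beta$. Region (A) produces the exponent $\alpha+\gamma-d$ when $\alpha<d$ and the exponent $\gamma$ (which is $\geq\beta$) when $\alpha\geq d$, and region (B) produces symmetrically $\alpha+\gamma-d$ or $\alpha\geq\beta$. Since $|k|^{-s}\leq|k|^{-\beta}$ whenever $s\geq\beta$ and $|k|\geq1$, each region is dominated by $C|k|^{-\beta}$, and a logarithmic factor appears only through region (A) when $\alpha=d$ or through region (B) when $\gamma=d$, reproducing exactly the two cases in the statement.

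The one point requiring genuine care -- and the step I expect to be the main obstacle -- is the bookkeeping at the borderline exponents $\alpha=d$ and $\gamma=d$, where the integral comparison produces a logarithm and one must confirm that its accompanying power of $|k|$ still matches $-\beta$ (noting, e.g., that when $\alpha=d$ but $\gamma>d$ the genuine decay $|k|^{-\gamma}\log|k|$ is comfortably absorbed into the stated, slightly lossy, bound $C(1+|k|)^{-\beta}\log(1+|k|)$). Everything else is a routine sum--versus--integral comparison.
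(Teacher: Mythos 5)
Your proof is correct, and all the borderline bookkeeping checks out: the three regions $\{|m|\leq|k|/2\}$, $\{|n|\leq|k|/2\}$, $\{|m|,|n|>|k|/2\}$ cover the constraint set (their overlap is harmless for an upper bound), each contribution carries an exponent at least $\beta=\min\{\alpha,\gamma,\alpha+\gamma-d\}$, and a logarithm arises only from the integral comparison at $\alpha=d$ or $\gamma=d$, exactly as the statement allows; your remark that for $\alpha=d$, $\gamma>d$ the bound $|k|^{-\gamma}\log|k|$ is absorbed into $(1+|k|)^{-\beta}\log(1+|k|)$ with $\beta=d$ is the one point that genuinely needed checking, and you handled it. Note that the paper itself offers no proof of this lemma: it simply cites Lemma 2.3 of \cite{BloRomTri2007}, so your argument fills in a step the authors outsource. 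The route you take --- splitting according to which of $|m|$, $|n|$ is comparable to $|k|$, together with sum--integral comparison, after disposing of bounded $|k|$ by finiteness of the convergent lattice sum (convergence being exactly the hypothesis $\alpha+\gamma>d$) --- is the standard and essentially the only natural proof of such convolution estimates on $\Z^d$, so your self-contained write-up is a faithful substitute for the external reference rather than a genuinely different method.
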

\def\MRnum#1
  #2\empty{#1}\renewcommand{\MRhref}[2]{\href{http://www.ams.org/mathscinet-getitem?mr=#1}{#2}}\renewcommand{\MR}[1]{\relax\ifhmode\unskip\space\fi
  \MRhref{\MRnum#1\empty}{\texttt{\Tiny[MR\MRnum#1\empty]}}}
  \providecommand{\arxiv}[1]{\href{http://arxiv.org/abs/#1}{arXiv:#1}}
\providecommand{\bysame}{\leavevmode\hbox to3em{\hrulefill}\thinspace}
\providecommand{\MR}{\relax\ifhmode\unskip\space\fi MR }
\providecommand{\MRhref}[2]{%
  \href{http://www.ams.org/mathscinet-getitem?mr=#1}{#2}
}
\providecommand{\href}[2]{#2}
\bibliographystyle{amsplain}

\end{document}